\newtheorem{thm}{Theorem}[section]
\newtheorem{cor}[thm]{Corollary}
\newtheorem{lem}[thm]{Lemma}
\theoremstyle{remark}
\newtheorem{rem}[thm]{Remark}
\theoremstyle{definition}
\newtheorem{defin}[thm]{Definition}
\DeclareMathOperator{\M}{M} 
\DeclareMathOperator{\Mor}{Mor} \DeclareMathOperator{\B}{B}
\DeclareMathOperator{\Rep}{Rep} \DeclareMathOperator{\D}{D}
\DeclareMathOperator{\C}{C} \DeclareMathOperator{\Aut}{Aut}
\DeclareMathOperator{\Op}{Op} \DeclareMathOperator{\Graph}{Graph}
\DeclareMathOperator{\Ran}{Ran}
\newcommand{\ha}{\hat\alpha}
\newcommand{\hb}{\hat\beta}
\newcommand{\hg}{\hat\gamma}
\newcommand{\hd}{\hat\delta}
\newcommand{\ta}{\tilde\alpha}
\newcommand{\tb}{\tilde\beta}
\newcommand{\tg}{\tilde\gamma}
\newcommand{\td}{\tilde\delta}
\newcommand{\gen}{\ha,\hb,\hg,\hd}
\newcommand{\Del}{\Delta}
\newcommand{\kap}{\kappa}
\newcommand{\ot}{\otimes}
\newcommand{\ep}{\varepsilon}
\newcommand{\hsp}[1]{&\hspace*{-0,25cm}#1&\hspace*{-0,25cm}}
\newcommand{\mc}[1]{\mathcal{#1}}
\newcommand{\mb}[1]{\mathbb{#1}}
\newcommand{\me}[3]{\langle\, #1\,|\,#2\,|\,#3\,\rangle}
\newcommand{\id}{{\rm id}}
\newcommand{\SL}{SL(2,\mathbb{C})}
\begin{document}
 \subjclass{Primary 46L89, Secondary
58B32, 22D25}
\title[]{The Heisenberg-Lorentz quantum group}
\author{P.~Kasprzak}
\address{Department of Mathematical Methods in Physics\\
Faculty of Physics\\
Warsaw University}
\email{pawel.kasprzak@fuw.edu.pl}
\thanks{The research was  supported by the KBN under grant 115/E-343/SPB/6.PR UE/DIE 50/2005 – 2008.}
\begin{abstract} In this article we  present a new $\C^*$-algebraic deformation of the Lorentz group. It is obtained by means of the Rieffel
deformation applied to $\SL$. We give a detailed description of the resulting  quantum
group $\mathbb G=(A,\Delta)$ in terms of generators
$\ha,\hb,\hg,\hd\in A^\eta$ - the quantum counterparts of the matrix coefficients $\alpha,\beta,\gamma,\delta$ of the fundamental representation of $\SL$.
In order to construct $\hb$ - the most involved  of four generators -  we first define it on the quantum Borel subgroup $\mathbb G_0\subset\mathbb G$, then on the quantum complement of the Borel subgroup and finally we perform the gluing procedure.
 In order to classify representations of the
$\C^*$-algebra $A$ and to analyze the action of the comultiplication $\Delta$ on the generators $\ha,\hb,\hg,\hd$ we employ the duality in the theory of locally compact quantum groups.
\end{abstract}
\maketitle \tableofcontents
\begin{section}{Introduction} A complete classification of
deformations of $SL(2,\mb{C})$ on the Hopf $*$-algebra level was
presented in \cite{WorZak}. Up till now three of the cases contained
there have been realized on a deeper, $\C^*$-algebraic level (see
\cite{Kasp}, \cite{PodWor}, \cite{WorZak1}). This paper is devoted
to the $\C^*$-algebraic realization of another case. The method of
deformation that we use is the Rieffel deformation which is the
same as in the example considered in \cite{Kasp}. Nevertheless, the
resulting quantum group $\mb{G}=(A,\Delta)$ - the Heisenberg-Lorentz quantum group -  is much more complex. One of the difficulties lies in the fact that
among the four generators $\ha, \hb,\hg,\hd$ of the $\C^*$-algebra
$A$, only $\hg$ is normal. Also the analysis of the
comultiplication $\Delta$ is not as straightforward as in the case
of \cite{Kasp}. To perform it we use the one to one correspondence
between representations of $\C^*$-algebra $A$ and
corepresentations of the dual quantum group $\widehat{\mb{G}}$. This
correspondence was also used to describe all representations of
$A$ on Hilbert spaces.

Let us briefly describe the contents of the paper. In the next
section we present the Hopf $*$-algebraic version of the
Heisenberg-Lorentz quantum group. We begin with a description of
commutation relations satisfied by generators $\ha,\hb,\hg,\hd$ - the quantum counterparts of the matrix coefficients $\alpha,\beta,\gamma,\delta$ of the fundamental representation of $\SL$.
Formulas for the comultiplication, coinverse and counit on
generators are the same as in the classical case. In  Section
\ref{hsrep} we define Hilbert space representations of the
Heisenberg-Lorentz commutation relations. We show that the tensor
product of two such representations can be defined. Section
\ref{calglev} is devoted to the construction of the
$\C^*$-algebraic version $\mathbb G=(A,\Delta)$ of the Heisenberg-Lorentz
quantum group. In particular we introduce four affiliated elements
$\ha,\hb,\hg,\hd\in A^\eta$. In order to construct $\hb$,		  we first define it on the quantum Borel subgroup $\mathbb G_0\subset\mathbb G$, then on the quantum complement of the Borel subgroup and finally we perform the gluing procedure. Having constructed affiliated elements
$\ha,\hb,\hg,\hd$, we show that they generate $A$. Moreover, we
note that for any representation $\pi\in\Rep(A;\mc{H})$ the
quadruple $(\pi(\ha),\pi(\hb),\pi(\hg),\pi(\hd))$ is a Hilbert
space representation of the Heisenberg-Lorentz commutation
relations. The converse is also true: for any representation
$(\ta,\tb,\tg,\td)$ of the Heisenberg-Lorentz commutation
relations on a Hilbert space $\mc{H}$ there exists a unique
representation $\pi\in\Rep(A;\mc{H})$ such that
\[\pi(\ha)=\ta,\,\,\,\pi(\hb)=\tb,\,\,\, \pi(\hg)=\tg,\,\,\, \pi(\hd)=\td.\]
At the end of  Section \ref{calglev} we show that the action of
$\Delta$ on generators has the same form as in the classical case.
Appendices gather useful facts concerning  the quantization map
and the counit in the Rieffel deformation, the complex
infinitesimal generator of the Heisenberg group and the product of
strongly commuting affiliated elements.

Throughout the paper we will freely use the language of
$\C^*$-algebras and the theory of locally compact quantum groups.
For a locally compact space $X$, $\C_0(X)$ and $\C_{\rm b}(X)$ shall respectively denote the
algebra of continuous functions vanishing at infinity and the algebra of
continuous bounded functions. If
$X$ is also a manifold, then $\C^\infty(X)$ denotes the algebra of
smooth functions on $X$ and $\C^\infty_c(X)$ denotes the algebra
of smooth functions of compact supports. For the notion of
multipliers, affiliated elements and algebras generated by a
family of affiliated elements we refer the reader to
\cite{WorNap}, \cite{Worgen} and \cite{Worun}. The set of elements
affiliated with a $\C^*$-algebra $A$ will be denoted by $A^\eta$ and the affiliation relation will be denoted by $\eta$, i.e.  $T\,\eta\, A$ means that $T\in A^\eta$.
The $z$-transform of $T\in A^\eta$ will be denoted by $z_T$. For
the precise definition of $z_T$ we refer to \cite{WorNap}. For the
theory of locally compact quantum groups we refer to \cite{KV} and
\cite{MNW}. For the theory of quantum groups given by a
multiplicative unitary we refer to \cite{BS} and \cite{SolWor}.
For the notion of $\Gamma$-product we refer to \cite{Ped}. All
Hilbert spaces appearing in the paper are assumed to be separable.
Given a pair of densely defined operators $X$ and $Y$ acting on a Hilbert space $\mathcal{H}$,
the dotted sum  $X\dotplus Y$ is the closure of the usual sum $X+Y$. To define $X\dotplus Y$ one has to prove that
the intersection of domains $\D(X)\cap D(Y)$ is dense in $\mathcal{H}$ and $X+Y$ defined on $\D(X)\cap\D(Y)$ is closable.

I would like to express my gratitude to S.L. Woronowicz for many stimulating discussions, which greatly influenced the final form of this paper. 
\end{section}
\begin{section}{Hopf *-algebra level}
We fix a deformation parameter $s\in\mathbb{R}$. Let $\mathcal{A}$
be a unital $*$-algebra generated by four elements $\gen$
satisfying the following commutation relations:
\begin{equation}\label{hlrel1}\left.\begin{array}{c}
\begin{array}{ccc} \ha\hb=\hb\ha&\ha\hg=\hg\ha&\ha\hd=\hd\ha\\
                                &\hb\hg=\hg\hb&\hb\hd=\hd\hb\\
                                &             &\hg\hd=\hd\hg\\
                                &&\ha\hd-\hb\hg=1
\end{array}\\\\
\begin{array}{cccc}\ha\ha^*-\ha^*\ha=-s\hg^*\hg&
\ha\hb^*-\hb^*\ha=-s\hg\hd^*&
 \ha\hg^*=\hg^*\ha&
 \ha\hd^*=\hd^*\ha\\
 &\hb\hb^*-\hb^*\hb=s(\ha^*\ha-\hd\hd^*)&\hb\hg^*=\hg^*\hb&\hb\hd^*-\hd^*\hb=s\hg^*\ha\\
 &&\hg\hg^*=\hg^*\hg&\hg\hd^*=\hd^*\hg\\
 &&&\hd\hd^*-\hd^*\hd=s\hg\hg^*.
\end{array}
\end{array}\right\}
\end{equation}
The $*$-algebra $\mathcal{A}$ was introduced in \cite{WorZak}
where it was also proven that it admits the structure of a Hopf
$*$-algebra. The action of the comultiplication $\Del:\mathcal{A}\rightarrow \mathcal{A}\otimes \mathcal{A}$ on the
generators is given by:
\begin{equation}\label{coalg}
\begin{array}{rcllrcl}
   \Del(\ha)\hsp{=}\ha\otimes\ha+\hb\otimes\hg\hsp{,}
    \Del(\hb)\hsp{=}\ha\otimes\hb+\hb\otimes\hd \\
    \Del(\hg)\hsp{=}\hg\otimes\ha+\hd\otimes\hg\hsp{,}
    \Del(\hd)\hsp{=}\hg\otimes\hb+\hd\otimes\hd.
   \end{array}
\end{equation}
The coinverse $\kap:\mathcal{A}\rightarrow \mathcal{A}$ is an involutive $*$-antihomomorphism and
its action on the generators is given by:
\begin{equation}\label{coalg1}
\begin{array}{rcllrcl}
   \kap(\ha)\hsp{=}\hd\hsp{,}\kap(\hb)\hsp{=}-\hb \\
    \kap(\hg)\hsp{=}-\hg\hsp{,}\kap(\hd)\hsp{=}\ha.
   \end{array}
\end{equation}
Finally, the action of the counit $\ep:\mathcal{A}\rightarrow\mathbb{C}$ on the generators is as follows:
\begin{equation}\label{coalg2}
\begin{array}{rcllrcl}
   \ep(\ha)\hsp{=}1\hsp{,}\ep(\hb)\hsp{=}0 \\
    \ep(\hg)\hsp{=}0\hsp{,}\ep(\hd)\hsp{=}1.
   \end{array}
\end{equation}
Note that the formulas defining co-operations on $\mathcal{A}$
coincide with the corresponding formulas for the Hopf $*$-algebra of
polynomial functions on $\SL$.
\end{section}
\begin{section}{Hilbert space level}\label{hsrep}
In this section we distinguish a class of
representations of commutation relations \eqref{hlrel1} on a
Hilbert space which will be proven to correspond to representations
of the $\C^*$-algebra $A$ of the Heisenberg-Lorentz quantum group (see Theorem \ref{repth}).
Note first that the pair $(\ha, -s\hg^*\hg)$, satisfy the commutation relation defining the Heisenberg Lie algebra
(see Appendix \ref{bcg}). The same is true of the pair $(\hd,s\hg^*\hg)$.
Furthermore, in the case when $\hg$ is represented by an invertible operator, the equation $\ha\hd-\hb\hg=1$ determines $\hb$.
This gives a motivation for the following definition.
\begin{defin}\label{reptrip} Let $\ta,\tg,\td$ be closed operators acting on a
Hilbert space $\mc{H}$. We say that the triple $(\ta,\tg,\td)$
satisfies the Heisenberg-Lorentz commutation relations if:
\begin{itemize}
\item[1.] $\tg$ is normal and $\ker\tg=\{0\}$;
\item[2.] $(\ta,-s\tg^*\tg)$ and  $(\td,s\tg^*\tg)$ are infinitesimal representations of
the Heisenberg group $\mb{H}$;
\item[3.] operators $\ta,\tg$ and $\td$ mutually strongly
commute.
\end{itemize}
\end{defin} For the notion of infinitesimal representation of $\mathbb{H}$ we refer to
Definition \ref{c0} and for the notion of strong commutativity we refer to Definition \ref{c1}.

Definition \ref{reptrip} describes representations of commutation
relations \eqref{hlrel1} in which $\hg$ is represented by an
invertible operator $\tg$. The next definition deals with the representations for which
$\tg=0$. Note that in this case it is the pair $(\tb,s(\ta^*\ta-1/\ta^*\ta))$ that satisfies the Heisenberg Lie algebra relation.
\begin{defin}\label{reppair} Let $\ta,\tb$ be closed operators acting on a
Hilbert space $\mc{H}$. We say that the pair $(\ta,\tb)$ satisfies
the Heisenberg-Lorentz commutation relations if
\begin{itemize}
\item[1.] $\ta$ is normal and $\ker\ta=\{0\}$;
\item[2.] $(\tb,s(\ta^*\ta-1/\ta^*\ta))$ is an infinitesimal representation of  of
the Heisenberg group $\mb{H}$;
\item[3.] operators $\ta$ and  $\tb$ strongly commute.
\end{itemize}
\end{defin}

To deal with the general case of representations of Heisenberg-Lorentz commutation relations note that $\hg$ commutes with all of the generators and their adjoints. This fact leads to the idea that
 any representation of the Heisenberg-Lorentz commutation relations splits into a direct sum of two representations: one with an invertible $\tg$ and one with $\tg$ being zero. More precisely we have:
\begin{defin}\label{repquar} Let $\ta,\tb,\tg,\td$ be closed
operators acting on a Hilbert space $\mc{H}$, $\tg$ being normal.
By $\mc{H}_0$, $\mc{H}_1$ and $\tg_1$ we shall respectively denote
the kernel of $\tg$, its orthogonal complement and the restriction
of $\tg$ to $\mc{H}_1$. We say that the quadruple
$(\ta,\tb,\tg,\td)$ is a representation of the Heisenberg-Lorentz
commutation relations if $\ta,\tb$ and $\td$ respect the
decomposition $\mc{H}=\mc{H}_0\oplus \mc{H}_1$ i.e. there exist
closed operators $\ta_0,\tb_0,\td_0$ acting on $\mc{H}_0$ and
$\ta_1,\tb_1,\td_1$ acting on $\mc{H}_1$ such that
\[\ta=\ta_0\oplus\ta_1,\,\, \tb=\tb_0\oplus\tb_1,\,\,
\td=\td_0\oplus\td_1\] and we have
\begin{itemize}
\item[1.] the pair $(\ta_0,\tb_0)$ satisfies the Heisenberg-Lorentz
commutation relations;
\item[2.] $\ta_0$ and $\td_0$ are mutual inverses:
$\td_0={\ta_0}^{-1}$;
\item[3.] the triple $(\ta_1,\tg_1,\td_1)$ satisfies the Heisenberg-Lorentz
commutation relations;
\item[4.] $\tb_1={\tg_1}^{-1}(\ta_1\td_1-1)$.
\end{itemize}
\end{defin}
\begin{rem}\label{dirint} The product of
operators in point 4 above is taken in the sense of Theorem
\ref{thmc2}. It is a well known fact that a representation of the
Heisenberg group $\mb{H}$ can be decomposed into a direct integral
of irreducible representations. In the case of irreducible
representations the operator $\tg$ appearing in Definition
\ref{reptrip} and $\ta$ appearing in Definition \ref{reppair}, are
multiples of identity. This fact will be used in the proof of the next theorem.
\end{rem}
As has already been mentioned, the class of representations defined above  corresponds
 to representations of the $\C^*$-algebra  $A$ of the Heisenberg-Lorentz quantum group (see Theorem \ref{repth}). Let us recall that for two representations $\pi_1\in\Rep(A,\mc{H})$ and $\pi_2\in\Rep(A,\mc{H}')$ their tensor product is defined by $\pi=(\pi_1\otimes\pi_2)\circ\Delta\in\Rep(A;\mc{H}\otimes \mc{H}')$. The next theorem gives a description of the tensor product construction in terms of the Heisenberg-Lorentz commutation relations. This construction will be crucial in the analysis of the comultiplication $\Delta$ on the $\C^*$-algebra level (see Theorem \eqref{delg}).
\begin{thm}\label{comrep}
Let $\ta,\tb,\tg,\td$ be closed operators acting on a Hilbert
space $\mc{H}$ and let $\ta',\tb',\tg',\td'$ be closed operators
acting on a Hilbert space $\mc{H}'$. Assume that
$(\ta,\tb,\tg,\td)$, $(\ta',\tb',\tg',\td')$ are representations
of the Heisenberg-Lorentz commutation relations. Then the
quadruple of operators $(\ta'',\tb'',\tg'',\td'')$ acting on
$\mc{H}\otimes \mc{H}'$, defined by:
\begin{equation}\label{delact}
\begin{array}{rcl}
\ta''\hsp{=}\ta\otimes\ta'\dotplus\tb\otimes\tg'\\
\tb''\hsp{=}\ta\ot\tb'\dotplus\tb\ot\td' \\
\tg''\hsp{=}\tg\ot\ta'\dotplus\td\ot\tg'\\
\td''\hsp{=}\tg\ot\tb'\dotplus\td\ot\td'
\end{array}\end{equation}
is a representation of the Heisenberg-Lorentz commutation
relations on $\mathcal{H}\otimes\mathcal{H}'$.
\end{thm}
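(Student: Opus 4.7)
The goal is to verify the four clauses of Definition \ref{repquar} for $(\ta'',\tb'',\tg'',\td'')$. First I would confirm that each dotted sum in \eqref{delact} is well defined: the two summands in each formula involve strongly commuting components (for instance $\tg,\td$ on $\mc{H}$ and $\ta',\tg'$ on $\mc{H}'$ in the formula for $\tg''$), so the algebraic tensor product of smooth vectors for the Heisenberg representations supplied by Definitions \ref{reptrip} and \ref{reppair} is a common dense core on which each sum is symmetric and closable, by a standard joint-functional-calculus argument.

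Next I would analyse $\tg''$. Since $\tg\ot\ta'$ and $\td\ot\tg'$ strongly commute by the preceding remark, joint functional calculus shows that $\tg''$ is normal. To identify $\ker\tg''$ with $\mc{H}_0\ot\mc{H}'_0$, I would pass to the direct integral of Remark~\ref{dirint} in which $\tg,\tg'$ act as scalars $g,g'$: when $g=g'=0$ the kernel of $\tg''$ is the whole fibre, while in each of the other three cases the trivial kernels of $\ta'$, $\td_0=\ta_0^{-1}$ and $\td_1$, combined with absolute continuity of the spectra of the underlying Heisenberg representations, rule out $ga'+dg'=0$ on a set of positive measure.

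The third step is to check that $\ta'',\tb'',\td''$ preserve the orthogonal decomposition of $\mc{H}\ot\mc{H}'$ induced by $\ker\tg''$, and then to verify the HL relations on each block. On $\mc{H}_0\ot\mc{H}'_0$ the formulas simplify to $\ta''=\ta_0\ot\ta'_0$, $\td''=\ta_0^{-1}\ot\ta_0'^{-1}$, and $\tb''$ as in \eqref{delact}; hence $\td''=(\ta'')^{-1}$ (clause~2 of Definition~\ref{repquar}), and the relations of Definition~\ref{reppair} for $(\ta'',\tb'')$ (clause~1) follow from the Heisenberg-group composition law applied to the two canonical pairs of the inputs. The complement block is treated analogously: clause~3 uses the canonical pairs $(\ta,-s\tg^*\tg)$, $(\td,s\tg^*\tg)$ and their primed counterparts, and clause~4 reduces, via Theorem~\ref{thmc2}, to the algebraic $SL(2)$ identity $\ta''\td''-\tb''\tg''=1$ that can be read off from \eqref{delact}.

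The hardest point is the last one: verifying the canonical Heisenberg relations rigorously on the tensor product, since one is manipulating unbounded closed operators whose sums and products are only defined through strong commutativity. My strategy is to descend via Remark~\ref{dirint} to a direct integral in which the strongly commuting generators ($\tg,\tg'$ on the complement block, $\ta,\ta'$ on the kernel block) act as scalars, so that each fibre reduces to a tensor product of standard irreducible Heisenberg representations, where the Heisenberg composition law yields the required identities on a dense core of smooth vectors.
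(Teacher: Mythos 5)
Your overall strategy --- reducing via Remark \ref{dirint} to fibres in which the strongly commuting ``central'' operators ($\tg,\tg'$ or $\ta_0,\ta_0'$) act as scalars, and verifying the relations there --- is the same reduction the paper makes (it treats exactly the three cases $\tg=c1,\tg'=c'1$; $\tg=c1,\ta_0'=c'1$; $\ta_0=c1,\ta_0'=c'1$). But your first step rests on a false premise. You claim each dotted sum is well defined because ``the two summands involve strongly commuting components,'' amenable to joint functional calculus. This is only plausible for $\tg''$; for $\ta''=\ta\ot\ta'\dotplus\tb\ot\tg'$ the first legs $\ta$ and $\tb$ are \emph{not} normal (e.g.\ $\ha\ha^*-\ha^*\ha=-s\hg^*\hg$) and do \emph{not} strongly commute with each other ($\ha\hb^*-\hb^*\ha=-s\hg\hd^*$), and the same failure occurs for $\tb''$ and $\td''$. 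Even for $\tg''$ the summands $\tg\ot\ta'$ and $\td\ot\tg'$ are non-normal, so there is no ``standard joint-functional-calculus argument'' available; the paper instead realizes $\tg''$ as the infinitesimal generator of the product of the two commuting unitary groups $U^{1\ot c\ta'}_{z,0}U^{c'\td\ot1}_{z,0}$, which is normal by construction, and for $\ta''$ it builds the auxiliary operator $T'=\tg''(c^{-1}\ta\ot1)-c^{-1}c'$ as a product of genuinely strongly commuting affiliated elements via Theorem \ref{thmc2}.

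The second, and decisive, gap is that you never prove the core statements that make the formal identities into identities of closed operators. Knowing that the sum agrees with the candidate generator $T$ (or $T'$, or $\tg''^{-1}(\ta''\td''-1)$) on smooth vectors only yields an inclusion such as $\tg''\subset T$; to get equality one must show that $\D(\tg\ot\ta')\cap\D(\td\ot\tg')$ --- the domain whose closure \emph{defines} the dotted sum --- is a core of $T$. This is precisely what the paper's Gaussian-smeared Weyl operators $I^a_\ep=\int d^2z\,f_\ep(z)V^a_{z,0}$ accomplish: the families $I^{\td}_\ep\ot I^{\ta'}_\ep$, $I_\ep$ and $J_\ep$ have ranges inside the relevant domain intersections, converge strongly to $1$, and commute well enough with the generators that properties 1--3 of \eqref{list} (and their analogues \eqref{list2}, \eqref{list3}) force the two closed operators to coincide. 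Your appeal to ``a dense core of smooth vectors'' asserts exactly this but supplies no mechanism for it, and the same omission affects your verification of clause 4 of Definition \ref{repquar}, where the algebraic $SL(2)$ identity must be promoted to the operator equality $\tb''=\tg''^{-1}(\ta''\td''-1)$. (Your argument for $\ker\tg''=\{0\}$ via translation invariance of the spectrum is essentially the paper's identity \eqref{trker} combined with separability, and is fine in spirit.)
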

\begin{proof} First, let us introduce some notation. For any $\varepsilon>0$,
$z\in\mathbb{C}$ we set
\[f_\varepsilon(z)=\frac{1}{\pi\ep}
\exp(-{\varepsilon}^{-1}|z|^2)\in\mb{R}_+.\] Note  that for any $\ep\in\mb{R}_+$, $f_\ep\in
L^1(\mb{C})$  and the family $f_\ep$
is a Dirac delta approximation:
\[\lim _{\ep\rightarrow 0}\int d^2z\, f_{\ep}(z)g(z)=g(0)\]
where $d^2z$ is a Haar measure on $\mathbb{C}$.
Let $V^a$ be an irreducible unitary
representation of the Heisenberg group $\mb{H}$ on a Hilbert space $\mc{H}$ (for an
explanation of the notation $V^a$ we refer to Appendix
\ref{bcg}). Smearing the family $V^a_{z,0}$ with a function $f_{\ep}$ we get the family $I^a_{\ep}$ of bounded operators acting on $\mathcal{H}$:
\begin{equation}\label{auxop}I^a_\ep=\int d^2z\,f_\varepsilon(z) V^{a}_{z,0}.\end{equation}
The following properties of $I^a_\ep$ will be used in the course of the proof:
\begin{equation}\label{iep}\left\{
\begin{array}{lll}
1.&\displaystyle \mbox{s}-\lim_{\ep\rightarrow 0}I^a_\varepsilon=1,&\hspace{5.5cm}\\[0.4ex]
2.& \displaystyle \Ran(I_\varepsilon)\subset\D(a^n)\mbox{ for any }n\in\mb{N},&\\[0.4ex]
3. &\displaystyle\lim_{\ep\rightarrow 0}a^nI_\ep h=a^nh\mbox{ for any
}h\in D(a^n),&
\end{array}\right.\end{equation}  where $\mbox{s}-\lim$ denotes the limit in
the strong topology on $\B(\mc{H})$.

Let us move on to the main part of the proof.
Let $c,c'\in\mb{C}\setminus\{0\}$. By Remark \ref{dirint}  it is
enough to prove our theorem in the following  three cases:
\begin{equation}\label{list01}\left\{
\begin{array}{lll}
1. & \mc{H}_0=\mc{H}_0'=\{0\},\,\, \tg=c1 \mbox{ and } \tg'=c'1, &\hspace{5cm}\\
2. & \mc{H}_0=\mc{H}_1'=\{0\},\,\, \tg=c1 \mbox{ and } \ta_0'=c'1,  &\\
3. & \mc{H}_1=\mc{H}_1'=\{0\},\,\, \ta_0=c1 \mbox{ and }
\ta'_0=c'1. &
\end{array}\right.
\end{equation}
The notation used above coincides with the notation of Definition
\ref{repquar}. In what follows we shall treat case 1 leaving cases
2 and 3 to the reader.

Note that the pairs $(1\ot
c\ta',-s|cc'|^2)$ and $(c'\td\ot 1, s|cc'|^2)$ are infinitesimal
representations of $\mb{H}$. For any $z\in\mb{C}$ we define a
unitary operator:
\[U_z=U^{1\ot c\ta'}_{z,0}U^{c'\td\ot
1}_{z,0}\in\B(\mc{H}_1\otimes \mc{H}'_1).\] It is easy to check
that the map
\[\mathbb{C}\ni z\mapsto
U_z\in\B(\mc{H}_1\otimes \mc{H}'_1)\] is a strongly continuous
representation of the group $(\mathbb{C},+)$. Let $T$ be the
corresponding infinitesimal generator. By definition $T$ is a
normal operator with the domain
\[\D(T)=\left\{h\in\mc{H}_1\otimes \mc{H}'_1:
\begin{array}{l}\mbox{The map}\\\mb{C}\ni z\mapsto
U_z h\in\mc{H}_1\otimes \mc{H}'_1\\\mbox{is once
differentiable}\end{array}\right\}\] and the action of $T$ on
$h\in\D(T)$ is given by
\begin{equation}\label{td}Th=2\frac{\partial}{\partial
z}U_zh\Big|_{z=0}.\end{equation}
With this definition of $T$ we have $\displaystyle U_z=e^{i\Im(zT)}$, which explains the factor
$2$ on the right hand side of \eqref{td}.
Comparing formulas \eqref{td} and \eqref{remdif} we see
that $\tg''\subset T$. In order to prove the equality $\tg''=T$ it is
enough to show that the linear subspace $\D(1\ot
\ta')\cap\D(\td\ot 1)\subset \mc{H}_1\ot \mc{H}'_1$, which is a core of $\tg''$ is also a core
of $T$. For this we use the family of operators
$I_\ep^{\td}\ot I^{\ta'}_\ep\in\B(\mc{H}_1\ot \mc{H}'_1)$. It has
the following properties:
\begin{equation}\label{list}\left\{
\begin{array}{lll}
1.&\displaystyle \mbox{s}-\lim_{\varepsilon\rightarrow 0}(I_\ep^{\td}\ot
I^{\ta'}_\ep)=1,&\hspace{5cm}\\[0.4ex]
2.&  \Ran(I_\ep^{\td}\ot I^{\ta'}_\ep)\subset
\D(\td\ot 1)\cap\D(1\ot \ta'),&\\[0.4ex]
3. &\displaystyle\lim_{\varepsilon\rightarrow 0}T(I_\ep^{\td}\ot
I^{\ta'}_\ep)h=Th\mbox{ for any }h\in D(T).&
\end{array}\right.\end{equation} Properties 1 and 2 above are direct
consequences of \eqref{iep}, while property 3 demands a separate proof
which is based on formulas \eqref{auxop} and \eqref{td}. The fact
that $\D(1\ot \ta')\cap\D(\td\ot 1)$ is a core of $T$ is now an
immediate consequence of 1, 2 and 3, hence we have $T=\tg''$.

In the analysis of $\tb''$ we shall use the fact that $\tg''$ defined above is invertible: $\ker\tg''=\{0\}$.
 Assume in the contrary that $\ker\tg''\neq\{0\}$.
Using the following identity:
\begin{equation}\label{trker} \left(U^{\td}_{\frac{z}{c'},0}\ot
U^{\ta'}_{-\frac{z}{c},0}\right)\tg''\left(U^{\td}_{-\frac{z}{c'},0}\ot
U^{\ta'}_{\frac{z}{c},0}\right)=\tg''+\bar z\end{equation}
we see that  $\tg''$  has  an eigenvector for any
complex number. This fact and the normality of $\tg''$
(eigenvectors of different eigenvalues are perpendicular)
contradicts separability of $\mc{H}_1\ot \mc{H}_1'$, hence $\ker\tg''=\{0\}$.

Let us move on to the  analysis of the operator
$\ta''=\ta\otimes\ta'\dotplus\tb\otimes\tg'$. Our objective is to
show that $\ta''$ is an infinitesimal complex generator of a
representation of the Heisenberg group $\mathbb{H}$ (cf. Definition
\ref{reptrip}). In order to do that we define an auxiliary operator
\begin{equation}\label{formal}T'=\tg''(c^{-1}\ta\ot 1)-c^{-1}c'.\end{equation}
Note that $\tg''$ and $\ta\ot 1$ strongly commute and by Theorem
\ref{thmc2}, $T'$ is well defined. It is easy to see that
$(T',-s\tg''^*\tg'')$ is an infinitesimal representation of
$\mb{H}$. Hence, to prove that $(\ta'',-s\tg''^*\tg'')$ is also an
infinitesimal representation of $\mb{H}$ it is enough to show that
$\ta''=T'$. For this purpose we use the  family of operators
$I_{\ep}\in\B(\mc{H}_1\ot \mc{H}'_1)$:
\[I_{\ep}=I^{\ta}_\ep I^{\td}_\ep\ot I^{\ta'}_\ep.\] It has the following properties:
\begin{equation}\label{list2}\left\{
\begin{array}{lll}
1.&\displaystyle \mbox{s}-\lim_{\ep\rightarrow 0}I_\varepsilon=1,&\hspace{5cm}\\[0.6ex]
2.&  \Ran(I_\ep)\subset\D(\ta'')\cap\D(T'),&\\[0.5ex]
3.& T'|_{\Ran(I_\ep)}=\ta''|_{\Ran(I_\ep)},&\vspace{0,05cm}\\[0.5ex]
4. &\displaystyle\lim_{\ep\rightarrow 0}T'I_\ep h=T'h\mbox{ for any
}h\in\D(T'),&
\\
5. &\displaystyle\lim_{\ep\rightarrow 0}\ta''I_\ep h=\ta''h\mbox{ for any
}h\in\D(\ta'').&
\end{array}\right.\end{equation}
Properties 3, 4 and 5 show that  $T'$ and $\ta''$ coincide on a
join core, hence $T'=\ta''$.

Similarly we prove that the operator
$\td''=\tg\ot\tb'\dotplus\td\ot\td'$ gives rise to the
infinitesimal representation $(\td'',-s\tg''^*\tg'')$ of the Heisenberg group $\mb{H}$.

To complete the proof we have to show that the operator
$\tb''=\ta\ot\tb'\dotplus\tb\ot\td'$ is equal to
$\tg''^{-1}(\ta''\td''-1)$ (see point 4 of Definition \ref{repquar}). In order to do that we use the family
of operators  $J_\ep\in\B(\mc{H}_1\ot \mc{H}'_1)$:
\[J_\ep=I_\ep^{\ta}I_\ep^{\td}\ot I_\ep^{\ta'}I_\ep^{\td'}.\]
It has the following properties:
\begin{equation}\label{list3}\left\{
\begin{array}{lll}
1.& \displaystyle s-\lim_{\ep\rightarrow 0}J_\varepsilon=1,&\hspace{4cm}\\[0.5ex]
2.&  \Ran(J_\ep)\subset\D(\tg''^{-1}(\ta''\td''-1))\cap\D(\tb''),&\\[0.5ex]
3.& \tb''|_{\Ran(J_\ep)}=\tg''^{-1}(\ta''\td''-1)|_{\Ran(J_\ep)}, \\[0.5ex]
4.&\displaystyle\lim_{\ep\rightarrow 0}\tb''J_\ep h=\tb''h\mbox{ for any
}h\in\D(\tb''),&
\\
5.&\displaystyle\lim_{\ep\rightarrow 0}\tg''^{-1}(\ta''\td''-1)J_\ep
h=\tg''^{-1}(\ta''\td''-1)h&\\
&\mbox{for any }h\in\D(\tg''^{-1}(\ta''\td''-1)).&
\end{array}\right.\end{equation}
Properties 3, 4 and 5 show that $\tb''$ and $
\tg''^{-1}(\ta''\td''-1)$ coincide on a join core, therefore
$\tb''=\tg''^{-1}(\ta''\td''-1)$.
\end{proof}
\end{section}
\begin{section}{$\C^*$-algebra level} \label{calglev} In this section we shall
describe the Heisenberg-Lorentz quantum group on the
$\C^*$-algebra level. It is obtained by applying the Rieffel
deformation to $\SL$ (which from now on will be denoted
by $G$). Let us fix an abelian subgroup $\Gamma\subset G$ which will be
used in the deformation procedure. We choose:
\[\Gamma=\left\{\left(
           \begin{array}{cc}
             1 & z \\
             0 & 1 \\
           \end{array}
         \right):z\in\mb{C}\right\}.\]
Note that $\Gamma$ is isomorphic with the additive group of
complex numbers. In particular $\Gamma$ and its Pontryiagin  dual
$\Hat\Gamma$ are isomorphic. The isomorphism that we shall use is
given by the following non-degenerate pairing on $\mathbb{C}\times\mathbb{C}$:
\[\langle z,z'\rangle=\exp(i\Im(zz')).\]
Let $\Psi$ be the skew bicharacter  on
$\Hat\Gamma\simeq\mathbb{C}$ given by:
\begin{equation}\label{dualbi}\Psi(z,z')=\exp\left(i\frac{s}{4}\Im(z\bar z')\right).\end{equation}
Using the results of Rieffel (see \cite{Rf2}) we know that the
abelian subgroup $\Gamma\subset G$ and the bicharacter $\Psi$ on
$\Hat\Gamma$ give rise to a quantum group $\mb{G}=(A,\Delta)$. In
this paper we shall use a formulation of the Rieffel deformation
based on the theory of crossed products which was described in
\cite{Kasp}. In this framework the deformation procedure  goes as
follows. Let $\rho:\Gamma^2\rightarrow \Aut(\C_0(G))$ be the
action of $\Gamma^2$ given by the left and right shifts of
functions on $G$ along  $\Gamma$:
\[\rho_{\gamma_1,\gamma_2}(f)(g)=f(\gamma_1^{-1}g\gamma_2)\]
where $\gamma_1,\gamma_2\in\Gamma$, $f\in\C_0(G)$ and $g\in G$. We
construct the crossed product $\C^*$-algebra
$B=\C_0(G)\rtimes_\rho\Gamma^2$. Let $\mathbb{B}=(B,\lambda,\hat\rho)$ be the
standard $\Gamma^2$-product structure on $B$, i.e. $\hat\rho$ is
the dual action of $\Hat\Gamma^2$ on $B$ and $\lambda$ is the
representation of $\Gamma^2$ on $B$, which implements  $\rho$ on
$\C_0(G)\subset\M(B)$:
\[\rho_{\gamma_1,\gamma_2}(f)=\lambda_{\gamma_1,\gamma_2}f\lambda_{\gamma_1,\gamma_2}^*.\]
 Let $\Hat\Psi:\Hat\Gamma\rightarrow\Gamma$ be  the homomorphism
given by $\Hat\Psi(\hat\gamma)=\frac{s}{4}\overline{\hat\gamma}$
for any $\hat\gamma\in\Hat\Gamma$ (note that we used the identifiaction $\Gamma\cong\Hat\Gamma\cong\mb{C}$). Using $\Hat\Psi$ we twist
$\hat\rho$ getting the dual action
$\hat\rho^\Psi:\Hat\Gamma^2\rightarrow\Aut(B)$ (in \cite{Kasp} it
was denoted by $\hat\rho^{\tilde\Psi\ot\Psi}$):
\begin{equation}\label{twistac} \hat\rho^\Psi_{\hat\gamma,\hat\gamma'}(b)=
\lambda_{-\Hat\Psi(\hat\gamma),\Hat\Psi(\hat\gamma')}
\,\hat\rho_{\hg,\hg'}(b)\,
\lambda_{-\Hat\Psi(\hat\gamma),\Hat\Psi(\hat\gamma')}^*.\end{equation}
 As was shown in
\cite{Kasp}, the triple $\mb{B}^\Psi=(B,\lambda,\hat\rho^\Psi)$ is
also a $\Gamma^2$-product. The $\C^*$-algebra $A$ of the
Heisenberg-Lorentz quantum group $\mb{G}$ is defined as the
Landstad algebra of $\mb{B}^\Psi$: $A\subset\M(B)$ is the subalgebra of elements satisfying the Landstad conditions:
 \begin{equation}\label{laninv} \begin{array}{cl} 1.&\hat\rho^\Psi_{\hg,\hg'}(b)=b
\mbox{ for all }\hg,\hg'\in\Hat\Gamma;\\
2. & \mbox{the map }
\Gamma^2\ni(\gamma,\gamma')\mapsto\lambda_{\gamma,\gamma'}b\lambda_{\gamma,\gamma'}^*\in\M(B)
\mbox{ is norm continuous;}\\
3.& xbx'\in B \mbox{ for all }x,x'\in\C^*(\Gamma)\subset\M(B).
\end{array}\end{equation} The three conditions defining $A\subset \M(B)$ will be
refereed to as the Landstad conditions.

 The $\C^*$-algebra $A$
carries the structure of a quantum group. All structure maps can
be described in terms of the $\Gamma^2$-product $\mb{B}$ but in
this paper we shall rather use
 the fact that they are related to a 
multiplicative unitary $W$ of $\mb{G}$, whose construction goes as follows. Let $dg$ be
a right invariant Haar measure on $G$ and let $L^2(G)$ be the Hilbert space of
square integrable functions with respect to $dg$. Let
$L_g,R_g\in\B(L^2(G))$ be the left and right regular
representation of $G$. Restricting them to $\Gamma\subset G$ we
get two representations of $\Gamma$ on $L^2(G)$. The related
representations of $\C^*(\Gamma)$ will be respectively denoted by
$\pi^L,\pi^R\in\Rep(\C^*(\Gamma);L^2(G))$. Obviously
$\Psi\in\M(\C^*(\Gamma)\ot\C^*(\Gamma))$ is unitary, hence
operators  $X,Y\in\B(L^2(G)\ot L^2(G))$ given by
\begin{equation}\label{XYdef}
\begin{array}{rcl}X\hsp{=}(\pi^R\ot\pi^R)(\Psi),\\
Y\hsp{=}(\pi^R\ot\pi^L)(\Psi)
\end{array}
\end{equation}
are unitary too. Finally, the multiplicative unitary
$W\in\B(L^2(G)\ot L^2(G))$ of $\mb{G}$ is of the following
form:
\begin{equation}\label{Wdef}W=YVX\end{equation}
where $V$ is the standard Kac-Takesaki operator of the group $G$. The
$\C^*$-algebra $A$ is isomorphic with the $\C^*$-algebra  of
slices of the first leg of $W$:
\begin{equation}\label{slicw}
A\simeq\left\{(\omega\ot\id)W:\omega\in\B(L^2(G))_*\right\}^{\|\cdot\|-\mbox{closure}}.
\end{equation}
Note that $A$ treated as the algebra of slices of $W$ is naturally
represented on $L^2(G)$.
\begin{subsection}{Affiliated element $\hg$}\label{hg} The idea
of construction of $\hg\in A^\eta$ is based on the observation that the normal operator
 $\gamma\in B^\eta$ is a central element, which by definition means that $z_{\gamma}\in\M(B)$
 is a central element. Together with the invariance of $\gamma$ under $\hat\rho$ this implies the invariance of  $\gamma$ under the twisted dual action $\hat\rho^\Psi$:
 \begin{eqnarray*} \hat\rho^\Psi_{\hat\gamma,\hat\gamma'}(\gamma)&\hspace*{-0.2cm}=
 &\hspace*{-0.2cm}\lambda_{-\Hat\Psi(\hat\gamma),\Hat\Psi(\hat\gamma')}
\,\hat\rho_{\hg,\hg'}(\gamma)\,
\lambda_{-\Hat\Psi(\hat\gamma),\Hat\Psi(\hat\gamma')}^*\\
&\hspace*{-0.2cm}=&\hspace*{-0.2cm}\lambda_{-\Hat\Psi(\hat\gamma),\Hat\Psi(\hat\gamma')}
\,\gamma\,
\lambda_{-\Hat\Psi(\hat\gamma),\Hat\Psi(\hat\gamma')}^*=\gamma\end{eqnarray*}
hence the first Landstad condition defining elements of $A$ is satisfied for $\gamma$.
Not to bother with the other Landstad conditions we give the following construction of
$\hat\gamma\in A^\eta$.
Let $\id:\mb{C}\rightarrow\mb{C}$ be the identity function:
 $\id(z)=z$ for any $z\in\mb{C}$. This function
  generates $\C_0(\mb{C})$ in the sense of
Woronowicz (see Definition 3.1 of \cite{Worgen}). Let
$\pi\in\Mor(\C_0(\mb{C});\C_0(G))$ be the morphism that sends
$\id\in\C_0(\mb{C})^\eta$ to the coordinate function
$\gamma\in\C_0(G)^\eta$. From the invariance of  $\gamma$ under
the action $\rho:\Gamma^2\rightarrow\Aut(\C_0(G))$ it follows that
$\pi$ satisfies the assumptions of Theorem $3.18$ of \cite{Kasp}
for the trivial action of $\Gamma^2$ on $\C_0(\mb{C})$, therefore
it gives rise to the twisted morphism
$\pi^\Psi\in\Mor(\C_0(\mb{C});A)$. We define $\hg\in A^\eta$ as
the image of $\id\in\C_0(\mb{C})^\eta$ under $\pi^\Psi$:
$\hg=\pi^\Psi(\id)$. Obviously, the $z$-transform
$z_{\hg}$ belongs to the center of $M(A)$ and $\hg$ treated as an
operator acting  on $L^2(G)$ (c.f. \eqref{slicw}) coincides with
the operator of multiplication  by the coordinate $\gamma$.
\end{subsection}
\begin{subsection}{Affiliated elements $\ha$ and
$\hd$}\label{ha} As has already been mentioned, the couples $(\ha,-s\hg^*\hg)$ and $(\hd,s\hg^*\hg)$
satisfy the Heisenberg Lie algebra relation (see Appendix \ref{bcg}).
This observation motivates the idea of defining $\ha$ and $\hd$ as complex infinitesimal generators of appropriately defined representations $U^{\ha}$ and $U^{\hd}$ of the Heisenberg group $\mathbb{H}$ on $A$.
In what follows we shall show the fruitfulness of this approach but first we need to introduce some notation.

Let $T$ be a normal
element affiliated with a $\C^*$-algebra $C$. By $\lambda(z;T)$ we
shall denote the unitary given by
\begin{equation}\label{replam}\lambda(z;T)=\exp(i\Im(zT))\in \M(C).\end{equation} Note that the map
\[\mb{C}\ni z\mapsto \lambda(z;T)\in\M(C)\] is a representation of the
additive group $\mb{C}$. This representation will be denoted by
$\lambda(\,\cdot\,;T)$.
 Let $\mb{B}=(B,\lambda,\hat\rho)$ be the $\Gamma^2$-product introduced
 at the beginning of Section \ref{calglev}. Let $T_l,T_r\in B^\eta$ be infinitesimal generators of representation
$\lambda:\mb{C}^2\rightarrow \M(B)$:
\begin{equation}\label{infgenl}\lambda_{z_1,z_2}=\lambda(z_1,T_l)\lambda(z_2,T_r). \end{equation}
The coordinate function $\gamma\in \C_0(G)^\eta\subset B^\eta$ - being central -
commutes with $\lambda_{z_1,z_2}$:
\[\lambda_{z_1,z_2}\gamma\lambda_{z_1,z_2}^*=\gamma.\] This
implies that $T_l,T_r$ and $\gamma$ strongly commute in the sense
of Definition \ref{c1}, hence using Theorem \ref{thmc2} we may construct a
pair of normal elements: $\gamma T^*_l,\gamma T^*_r\in B^\eta$.
We define the aforementioned representations $U^{\ha}$ and $U^{\hd}$
of $\mb{H}$ in $B$ by the following formulas:
 \begin{equation}\label{repdef}\begin{array}{rcl}U^{\ha}_{z,t}\hsp{=}\lambda
 \left(z;\alpha\right)\lambda\left(z;-\frac{s}{4}\gamma T^*_l\right)
 \lambda\left(t;-\frac{s}{4}\gamma^*\gamma\right),\\
 U^{\hd}_{z,t}\hsp{=}\lambda
 \left(z;\delta\right)\lambda\left(z;-\frac{s}{4}\gamma T^*_r\right)
 \lambda\left(t;\frac{s}{4}\gamma^*\gamma\right)\end{array}
 \end{equation} were we used the embedding $\C_0(G)^\eta\subset B^\eta$
 to treat  $\alpha,\delta$ and $\gamma^*\gamma$ as normal elements affiliated with $B$.
The analysis of $U^{\ha}$ and $U^{\hd}$ will be performed in the proof of the next theorem
but before we formulate it, we have to introduce some auxiliary objects.
Let $\partial_l$ and
$\partial_r$ denote the vector fields  whose actions on a function $f\in\C^\infty(G)$ are given by:
  \begin{equation}\label{diffop1}\begin{array}{rcl}\partial_lf(g)\hsp{=}2\frac{\partial}{\partial
  z}L_z(f)(g)|_{z=0},\vspace{0,1cm}\\
\partial_rf(g)\hsp{=}2\frac{\partial}{\partial z}R_z(f)(g)|_{z=0}\end{array}
 \end{equation}
($L_z$ and $R_z$  denote the
operators of the left and right shift by an element $z\in\Gamma$).
Using $\partial_l$ and $\partial_r$ we define differential
operators $\Op(\alpha)$ and $\Op(\delta)$ acting on
$\C^\infty_c(G)$:
\begin{equation}\label{diffop}\begin{array}{rcl}\Op(\alpha)\hsp{=}\alpha-\frac{s}{4}\gamma\partial_l^*,\\
 \Op(\delta)\hsp{=}\delta-\frac{s}{4}\gamma\partial_r^*\end{array}.
 \end{equation}
  The quantization map $\mc{Q}$ used
in the next theorem is described in Appendix \ref{Aqm}.
\begin{thm}\label{repad} Let $U^{\ha}_{z,t},U^{\hd}_{z,t}\in\M(B)$ be the unitary
elements given by \eqref{repdef}. Let $\Op(\alpha)$ and
$\Op(\delta)$ be the differential operators given by
\eqref{diffop} and let $A$ be
 the Landstad algebra of $\mb{B}^\Psi$ (see \eqref{laninv}).
Then:
\begin{itemize}
\item[1.] $U^{\ha}_{z,t},U^{\hd}_{z,t}$ are elements of $\M(A)\subset\M(B)$;
\item[2.] the maps
\begin{eqnarray*} \mb{H}\ni(z,t)\mapsto U^{\ha}_{z,t}\in\M(A)\\
                  \mb{H}\ni(z,t)\mapsto U^{\hd}_{z,t}\in\M(A)
                  \end{eqnarray*}
are strictly continuous, commuting representations of the
Heisenberg group;
\item[3.] let $\ha,\hd\in A^\eta$ be the complex generators of $U^{\ha}$ and $U^{\hd}$ (see Appendix \ref{bcg}).
The set $\{\mc{Q}(f):f\in\C_c^\infty(G) \}\subset A$ is a common
core of $\ha,\hd\in A^\eta$ and we have
\begin{equation}\label{acth1}\begin{array}{c}\ha\mc{Q}(f)=\mc{Q}(\Op(\alpha)f)\\
                 \hd\mc{Q}(f)=\mc{Q}(\Op(\delta)f)
\end{array}\end{equation} for any $f\in\C_c^\infty(G)$.
\end{itemize}
\end{thm}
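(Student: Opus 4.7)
The plan is to address the three claims in order. Claims 2 and 3 will follow relatively directly from the Heisenberg-group structure and from properties of the quantization map $\mc{Q}$; the bookkeeping in claim~1, specifically the cancellation forced by the $\Hat\Psi$-twist, is the main obstacle.

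For claim~1, I verify the three Landstad conditions \eqref{laninv} for $U^{\ha}_{z,t}$; the case of $U^{\hd}_{z,t}$ is analogous. Since $\alpha,\delta,\gamma^*\gamma\in \C_0(G)^\eta$ are fixed by the dual action $\hat\rho$ (which acts trivially on $\C_0(G)\subset \M(B)$), invariance of $U^{\ha}_{z,t}$ under $\hat\rho^\Psi$ reduces to the action of $\hat\rho$ on the $T_l$-exponential. By definition of the dual action on the crossed product, $\hat\rho_{\hg,\hg'}(T_l)=T_l+\hg$ in the chosen pairing, producing an extra scalar factor that is cancelled by the conjugation $\lambda_{-\Hat\Psi(\hg),\Hat\Psi(\hg')}$ built into \eqref{twistac}; matching the bicharacter \eqref{dualbi} against the coefficient $-\tfrac{s}{4}$ in \eqref{repdef} shows the cancellation is exact. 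Norm continuity of $(\gamma_1,\gamma_2)\mapsto\lambda_{\gamma_1,\gamma_2}U^{\ha}_{z,t}\lambda_{\gamma_1,\gamma_2}^*$ reduces to continuity of $\rho$ on $\alpha$ and $\gamma^*\gamma$, plus the fact that $T_l$ is invariant under $\lambda$-conjugation since $\Gamma$ is abelian. The final condition $xU^{\ha}_{z,t}x'\in B$ for $x,x'\in\C^*(\Gamma)$ follows by pushing $x,x'$ through the $\lambda$-exponentials and invoking the standard density of $\C_0(G)\cdot \C^*(\Gamma^2)$ in $B$.

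For claim~2, strong commutativity of $\alpha$ with $\gamma T_l^*$ allows combining the first two factors in \eqref{repdef} into a single $\lambda(z;\alpha-\tfrac{s}{4}\gamma T_l^*)$. By Appendix~\ref{bcg} it then suffices to check that $(\alpha-\tfrac{s}{4}\gamma T_l^*,\,-s\gamma^*\gamma)$ is an infinitesimal representation of $\mb{H}$. The only nontrivial Weyl commutator is $[\alpha,T_l^*]$ (together with its adjoint), which is a scalar multiple of $\gamma$ because $T_l$ generates the left shift sending $\alpha$ to $\alpha-z\gamma$; combined with the prefactor $-\tfrac{s}{4}$ this produces exactly $-s\gamma^*\gamma$. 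The same argument handles $U^{\hd}$, and the two representations commute because $T_l,T_r$ strongly commute (left and right translations are independent) while $\alpha,\delta,\gamma^*\gamma$ mutually commute as functions on $G$. Strict continuity of both maps is automatic from continuity of $z\mapsto\lambda(z;T)$ for each normal generator involved.

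For claim~3, the complex infinitesimal generator $\ha$ arises by differentiating $U^{\ha}_{z,t}$ at the origin (Appendix~\ref{bcg}). For $f\in \C_c^\infty(G)$ the element $\mc{Q}(f)\in A$ lies in the smooth domain: the factor $\lambda(z;\alpha)$ acts on $\mc{Q}(f)$ as multiplication of $f$ by $e^{i\Im(z\alpha)}$, the factor $\lambda(z;-\tfrac{s}{4}\gamma T_l^*)$ acts by combined multiplication by $\gamma$ and a left shift whose derivative produces $\partial_l^* f$, and the $t$-parameter contributes only to the real direction. Differentiating and invoking compatibility of $\mc{Q}$ with multiplication and translation (Appendix~\ref{Aqm}) yields the two formulas of \eqref{acth1}. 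Finally, $\{\mc{Q}(f):f\in \C_c^\infty(G)\}$ is a common core because it is dense in $A$ and invariant under the one-parameter groups $z\mapsto U^{\ha}_{z,0}$ and $z\mapsto U^{\hd}_{z,0}$, so a standard smoothness criterion applies.
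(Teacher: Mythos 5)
Your outline of claim 1 conflates membership in $A$ with membership in $\M(A)$, and the conditions you propose to verify actually fail. The unitaries $U^{\ha}_{z,t}$ cannot satisfy the third Landstad condition $xU^{\ha}_{z,t}x'\in B$: that condition singles out elements of $A$ itself, and a unitary cannot lie in the non-unital algebra $A$ (already $x\cdot 1\cdot x'\notin B$ for $x,x'\in\C^*(\Gamma)$). Likewise the map $(\gamma_1,\gamma_2)\mapsto\lambda_{\gamma_1,\gamma_2}U^{\ha}_{z,t}\lambda_{\gamma_1,\gamma_2}^*$ is \emph{not} norm continuous: by \eqref{gan} it equals $\lambda(z,-z_1\gamma)U^{\ha}_{z,t}$, and since $\gamma$ is an unbounded coordinate one has $\|\lambda(z,-z_1\gamma)-1\|=2$ for every $z_1\neq 0$. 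What must be checked is the multiplier criterion the paper uses: $\hat\rho^\Psi$-invariance together with norm continuity of $(z_1,z_2)\mapsto\lambda_{z_1,z_2}U^{\ha}_{z,t}\lambda_{z_1,z_2}^*\,a$ for each $a\in A$, the latter holding precisely because $\hg\in A^\eta$ makes $\lambda(z,-z_1\hg)a$ continuous in norm. Your cancellation mechanism for the invariance is also misattributed: the dual action $\hat\rho$ shifts $T_l$ by the character, while the compensating factor is produced by the $\lambda_{-\Hat\Psi(\hat\gamma),\Hat\Psi(\hat\gamma')}$-conjugation acting on $\alpha$ (shifting $\alpha\mapsto\alpha+\frac{s}{4}\bar z_1\gamma$); the two extra factors are unitary functions of $\gamma$, not scalars, and they cancel against each other.

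The two steps you declare ``automatic'' or ``standard'' are where the paper does most of its work, and neither shortcut is available. Strict continuity in $\M(A)$ does not follow from continuity of each factor $\lambda(z;T)$ in $\M(B)$: strict convergence in $\M(B)$ controls $U^{\ha}_{z,t}b$ only for $b\in B$, whereas $A$ sits in $\M(B)$ with trivial intersection with $B$; the paper instead proves $\|U^{\ha}_{z,t}\mc{Q}(f)-\mc{Q}(f)\|\to 0$ via the identity $U^{\ha}_{z,t}\mc{Q}(f)=\mc{Q}(U^{\ha}_{z,t}f)$ and the sup-norm-of-derivatives estimate of Theorem \ref{A1}, then uses density of $\mc{Q}(\C_c^\infty(G))$ in $A$. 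Similarly, the core assertion cannot be settled by the invariant-dense-subspace criterion for one-parameter groups: $z\mapsto U^{\ha}_{z,0}$ is only a projective representation of $\mb{C}$ (since $(\ha,-s\hg^*\hg)$ is a Heisenberg pair), $\ha$ is not normal, and the ambient object is the $\C^*$-algebra $A$ rather than a Hilbert space, so a core must be certified through Lemma \ref{lemcore}, i.e., by showing that $(1+\ha^*\ha)\mc{Q}(\C_c^\infty(G))$ is dense in $A$. The paper establishes this density with the explicit resolvent formula \eqref{dena} for $(1+\ha^*\ha)^{-1}$, built from the heat-kernel family of Appendix \ref{bcg}, followed by an approximation of the resulting function by elements of $\C_c^\infty(G)$ in the topology controlled by Theorem \ref{A1}. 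Your derivation of $\ha\mc{Q}(f)=\mc{Q}(\Op(\alpha)f)$ by differentiating at the origin is essentially the paper's argument and is fine; the missing content is the core statement and the continuity statement.
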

\begin{proof} Let us begin by proving that
$U^{\ha}_{z,t},U^{\hd}_{z,t}\in\M(A)$. Let $\hat\rho^\Psi$  be the
twisted dual action given by \eqref{twistac}. It is easy to check that
\[\left\{\begin{array}{ll}1.& \hat\rho^\Psi_{z_1,z_2}\left(\lambda
 \left(z;\alpha\right)\right)=\lambda\left(z,\frac{s}{4}\bar{z}_1\gamma\right)\lambda
 \left(z;\alpha\right),\vspace{0.1cm}\\
2.&\hat\rho^\Psi_{z_1,z_2}\left(\lambda\left(z;-\frac{s}{4}\gamma
T^*_l\right)\right)=\lambda(z;-\frac{s}{4}\bar{z}_1\gamma)\lambda\left(z;-\frac{s}{4}\gamma
T^*_l\right),\vspace{0.1cm}\\3.&\hat\rho^\Psi_{z_1,z_2}\left(\lambda
 \left(z;\delta\right)\right)=\lambda\left(z;\frac{s}{4}\bar{z}_2\gamma\right)\lambda
 \left(z;\delta\right),\vspace{0.1cm}\\4.&\hat\rho^\Psi_{z_1,z_2}\left(\lambda\left(z;-\frac{s}{4}\gamma
T^*_r\right)\right)=\lambda(z;-\frac{s}{4}\bar{z}_2\gamma)\lambda\left(z;-\frac{s}{4}\gamma
T^*_r\right).
 \end{array}\right.\]
 Using these equalities and \eqref{repdef} we see that
  $U^{\ha}_{z,t}$ and
 $U^{\hd}_{z,t}$ satisfy the first Landstad condition (see \eqref{laninv}).
  Let us move on to the second Landstad condition.
 One can check that:
\begin{equation}\label{gan}\begin{array}{l}\lambda_{z_1,z_2}U^{\ha}_{z,t}\lambda_{z_1,z_2}^*=\lambda(z,-z_1\gamma)U^{\ha}_{z,t},\vspace{0,1cm}\\
\lambda_{z_1,z_2}U^{\hd}_{z,t}\lambda_{z_1,z_2}^*=\lambda(z,z_2\gamma)U^{\ha}_{z,t}.\end{array}\end{equation}
In Section \ref{hg} we constructed the affiliated element $\hg\in
A^\eta$. Its image in $B^\eta$ coincides with the coordinate
function $\gamma\in\C_0(G)^\eta\subset B^\eta$, hence  using \eqref{gan} we see that the
following two maps are norm continuous:
\[\begin{array}{l}\mb{C}^2\ni(z_1,z_2)\mapsto\lambda_{z_1,z_2}U^{\ha}_{z,t}\lambda_{z_1,z_2}^*a\in\M(B)\\
\mb{C}^2\ni(z_1,z_2)\mapsto\lambda_{z_1,z_2}U^{\hd}_{z,t}\lambda_{z_1,z_2}^*a\in\M(B).
                 \end{array}\]
for any $a\in A$. This norm-continuity together with
$\hat\rho^\Psi$-invariance of
$U^{\ha}_{z,t},U^{\hd}_{z,t}\in\M(B)$ implies that
$U^{\ha}_{z,t},U^{\hd}_{z,t}$ are indeed elements of $\M(A)$. The
proof that $U^{\ha}$ and $U^{\hd}$ are commuting representations
of the Heisenberg group $\mb{H}$ is left to the reader. 

Let us now prove the strict
continuity of these representations. For this purpose we shall
treat $U^{\ha}_{z,t}$ and  $U^{\hd}_{z,t}$ as operators acting on
$L^2(G)$ (see \eqref{slicw}). It can be checked that the action of $U^{\ha}$ and
$U^{\hd}$ on $L^2(G)$ expressed in coordinates
$\alpha,\delta,\gamma$ is given by:
\begin{equation}\label{l2u}
\begin{array}{rcl}U^{\ha}_{z,t}f(\alpha,\gamma,\delta)\hsp{=}
\exp\left(-i\frac{st}{4}\bar{\gamma}\gamma\right)\exp(i\Im(z\alpha))
f(\alpha-\frac{s}{4}\bar{z}\bar{\gamma}\gamma,\gamma,\delta)\\
U^{\hd}_{z,t}f(\alpha,\gamma,\delta)\hsp{=}
\exp\left(i\frac{st}{4}\bar{\gamma}\gamma\right)\exp(i\Im(z\delta))
f(\alpha,\gamma,\delta+\frac{s}{4}\bar{z}\bar{\gamma}\gamma)
\end{array}
\end{equation} for any $f\in L^2(G)$. Using Theorem 4.14 of \cite{Kasp} we obtain:
\begin{equation}\label{uq}U^{\ha}_{z,t}\mc{Q}(f)=\mc{Q}(U^{\ha}_{z,t}f)\end{equation} for any
$f\in\C_c^{\infty}(G)$. This together with Theorem \ref{A1} leads
to the following estimation:
\[\|U^{\ha}_{z,t}\mc{Q}(f)-\mc{Q}(f)\|\leq c\max_{k,k',l,l'\leq 5}
\sup_{g\in
G}\left|\partial_l^{k*}\partial_l^{k'}\partial_r^{*m}\partial_r^{m'}(U^{\ha}_{z,t}f-f)\right|.\]
Using \eqref{l2u} we may see that the right hand side of the above inequality is convergent to zero when $(z,t)\rightarrow (0,0)$, which shows
that:
\[\lim_{(z,t)\rightarrow
(0,0)}\|U^{\ha}_{z,t}\mc{Q}(f)-\mc{Q}(f)\|=0.\] The density of the
set $\{\mc{Q}(f):f\in\C_c^\infty(G)\}$ in $A$ ensures that
$U^{\ha}$ is continuous  in the sense of the  strict topology of
$\M(A)$. We can prove the strict continuity of the
representation
 $U^{\hd}$ in a similar way.

Let us move on to the proof of the third point of our theorem.
Using equation \eqref{l2u} one can check that
\[2\frac{\partial}{\partial
z}U^{\ha}_{z,0}f\Big|_{z=0}=\Op(\alpha)f\] for any $f\in
\C_c^\infty(G)$. This together with \eqref{uq} and \eqref{remdif1}
proves the first equation of
 \eqref{acth1}.
 The second formula of \eqref{acth1} is proven in a similar way.
To prove that $\mc{Q}(\C_c^\infty(G))$ is a core of either $\ha$
or
 $\hd$ it is enough to check that the sets
 \begin{equation}\label{opaden}\begin{array}{c}\{(1+\ha^*\ha)\mc{Q}(f):f\in\C_c^\infty(G)\},\\
 \{(1+\hd^*\hd)\mc{Q}(f):f\in\C_c^\infty(G)\}
 \end{array}\end{equation} are dense in $A$ (see Lemma \ref{lemcore}).
In what follows we shall sketch the proof of the density for the
first of these sets. Let us first note that the set
\[(1+\ha^*\ha)^{-1}\mc{Q}(\C_c^\infty(G))=\{(1+\ha^*\ha)^{-1}\mc{Q}(f):f\in\C_c^\infty(G)\}\subset
A\] is dense in $A$. This follows from the density of
$\mc{Q}(\C_c^\infty(G))$ in $A$ and the fact that $\ha\in A^\eta$.
Let $f$ be an arbitrary element of $\C_c^\infty(G)$ and $g\in
L^2(G)$ the function given by:
\[g=(1+\ha^*\ha)^{-1}f.\]
Using \eqref{asa} we see that
\begin{equation}\label{dena}g=\int_{\mb{R}_+}dt\,\exp(-t)\int_{\mb{C}}
d^2z\,h_t\left(z,-s\frac{1}{2}\hg^*\hg\right)U^{\ha}_{z,0}f.\end{equation}
One can check that $g$ is quantizable in the sense of Theorem
\ref{A1}, $\mc{Q}(g)\in\D(\ha^*\ha)$ and
$\mc{Q}(f)=(1+\ha^*\ha)\mc{Q}(g)$. Using formula \eqref{dena} we
can prove the existence of a sequence  $f_n\in\C_c^\infty(G)$ such
that
\begin{equation}\label{parcon}\begin{array}{rcl}\displaystyle\lim_{n\rightarrow
\infty}\partial_l^k\partial_l^{*k'}\partial_r^m\partial_r^{*m'}f_n
\hsp{=}\partial_l^k\partial_l^{*k'}\partial_r^m\partial_r^{*m'}g\\
\displaystyle\lim_{n\rightarrow
\infty}\partial_l^k\partial_l^{*k'}\partial_r^m\partial_r^{*m'}(1+\Op(\alpha)^*\Op(\alpha))f_n
\hsp{=}\partial_l^k\partial_l^{*k'}\partial_r^m\partial_r^{*m'}(1+\Op(\alpha)^*\Op(\alpha))g\end{array}\end{equation}
for any $k,k',m,m'\leq 5$. By equations \eqref{acth1},
\eqref{parcon}, Theorem \eqref{A1} and the closedness of $\ha$ we
get
\[\mc{Q}(f)=\lim_{n\rightarrow\infty}(1+\ha^*\ha)\mc{Q}(f_n).\]
Using the fact that  $f$ is an arbitrary smooth function of
compact support and $\mc{Q}(\C_c^\infty(G))$ is a dense subset of
$A$ we get:
\[\overline{(1+\ha^*\ha)\mc{Q}(\C_c^\infty(G))}^{||\cdot||}=A.\]
This ends the proof of \eqref{opaden} for
$\ha$.
\end{proof}
\end{subsection}
\begin{subsection}{Quantum Borel subgroup $\mb{G}_0$}\label{qbs}
This section is a preparation for the construction of the affiliated element $\hb\in A^\eta$.
For this purpose we have to split $A$ into appropriately defined
quantum subspaces. This splitting corresponds to the splitting of the classical group $G$ into its
Borel subgroup $G_0\subset G$ and the set theoretic complement of $G_0$, where by  $G_0\subset G$ we understand:
\[G_0=\left\{\left(
           \begin{array}{cc}
             \alpha_0 & \beta_0 \\
             0 & \alpha^{-1}_0\\
           \end{array}
         \right):\alpha_0\in\mb{C}_*, \beta_0\in\mb{C}\right\}.\]
Let us be more precise.
Let $\pi_0\in\Mor(\C_0(G);\C_0(G_0))$ be the restriction morphism:
\[\pi_0(f)(g_0)=f(g_0)\]
for any $f\in\C_0(G)$ and $g_0\in G_0$.
  Applying the Rieffel
deformation to $(\C_0(G_0),\Delta)$, based on the subgroup
$\Gamma\subset G$ we construct a quantum group $\mb{G}_0=(A_0,\Delta)$. Let
$\mb{B}_0$  be the respective $\Gamma^2$-product. In this section
$T_l$ and $T_r$ denote the infinitesimal generators of the
representation $\lambda:\mb{C}^2\rightarrow \M(B_0)$ (see
\eqref{infgenl}) and $\partial_l,\partial_r$ denote the
 vector fields on $G_0$ defined like in \eqref{diffop1}.
 By Theorem $3.18$ of
\cite{Kasp} the restriction morphism $\pi_0
\in\Mor(\C_0(G);\C_0(G_0))$ induces the twisted morphism of
$\C^*$-algebras $\pi_0^\Psi:A\rightarrow A_0$ and the surjectivity
of $\pi_0$ implies the surjectivity of $\pi_0^\Psi$. Let
$A_{\hg}\subset A$ be the two-sided ideal generated by
 $z_{\hg}$. Invoking the centrality of  $z_{\hg}$ in $\M(A)$ we
 have  $A_{\hg}= \overline{z_{\hg}A}^{\|\cdot\|}$.
 It is easy to see that $\pi_0^{\Psi}(z_{\hg})=0$, which implies that
 $A_{\hg}\subset\ker\pi_0^{\Psi}$.  It can also be proven that $\ker\pi_0^{\Psi}\subset A_{\hg}$, hence
we have the exact sequence of $\C^*$-algebras:
\begin{equation}\label{exseq}0\rightarrow A_{\hg}\rightarrow
A\xrightarrow{\pi_0^\Psi}A_0\rightarrow 0.\end{equation}

In what follows we shall construct an affiliated element $\hb_0\in
A_0^\eta$, which is farther used in the construction of $\hb\in
A^\eta$. Let us first mention that following the construction of
$\hg\in A^\eta$ of Section \ref{hg}, we may introduce an
affiliated element $\ha_0\in A^\eta_0$. It is
 normal and invertible and its action on $L^2(G_0)$ is given by the
 multipliciation operator by the coordinate $\alpha_0$.
Remebering that $\C_0(G_0)^\eta\subset B_0^\eta$ we shall consider
$\alpha_0$ and $\beta_0$ as affiliated with $B_0$. The elements
$\alpha_0, T_l, T_r\in B_0^\eta$ strongly commute, hence using Theorem
\ref{thmc2} we construct $\alpha_0T_r^*,\,\alpha^{-1}_0T^*_l\in
B_0^\eta$. For any $(z,t)\in\mb{H}$
 we define the unitary element:
 \begin{equation}\label{repdef1}U^{\hb_0}_{z,t}=\lambda
 \left(z;\beta_0\right)\lambda\left(z;-\frac{s}{4}\alpha_0^{-1} T^*_l\right)
 \lambda\left(z;-\frac{s}{4}\alpha_0 T^*_r\right)
 \lambda\left(t;-\frac{s}{4}(|\alpha_0|^{-2}-|\alpha_0|^{2})\right)\in\M(B_0).
 \end{equation} Let us also define the differential operator:
 \begin{equation}\Op(\beta_0)=\beta_0-\frac{s}{4}\alpha_0^{-1}\partial_l^*-\frac{s}{4}\alpha_0\partial_r^*.\end{equation}
The proof of the next theorem is similar to the proof of Theorem
\ref{repad}.  The quantization map  related to the quantum group
$\mb{G}_0$ is denoted by $\mc{Q}_0$.
\begin{thm} Let $U^{\hb_0}_{z,t}\in\M(B_0)$ be the unitary
element given by  formula \eqref{repdef1}. Then
\begin{itemize}
\item[1.] $U^{\hb_0}_{z,t}$ is an element of $\M(A_0)\subset\M(B_0)$ for any $(z,t)\in\mb{H}$;
\item[2.] the map
\[\mb{H}\ni(z,t)\mapsto U^{\hb_0}_{z,t}\in\M(A_0)\]
is a strongly continuous representation of the Heisenberg group;
\item[3.] the set $\{\mc{Q}_0(f):f\in\C_c^\infty(G_0)
\}\subset A_0$ is a core of the generator $\hb_0\in A_0^\eta$ of the
representation $U^{\hb_0}$  and we have
\begin{equation}\label{acth}\hb_0\mc{Q}_0(f)=\mc{Q}_0(\Op(\beta_0)f)
\end{equation} for any $f\in\C_c^\infty(G_0)$. Moreover the set
\begin{equation}\label{densbo}
\{\mc{Q}_0((1+\Op(\beta_0)^*\Op(\beta_0))f):f\in\C_c^\infty(G_0)\}\end{equation}
is dense in $A_0$.
\end{itemize}
\end{thm}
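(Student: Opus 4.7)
The plan is to follow the proof of Theorem \ref{repad} line by line, adapted to the more involved generator $U^{\hb_0}$. I would first establish that $U^{\hb_0}_{z,t}\in\M(A_0)$ by verifying both Landstad conditions from \eqref{laninv}. For $\hat\rho^\Psi$-invariance I would compute the twisted dual action on each of the four factors in \eqref{repdef1} separately, in direct analogy to items 1--4 in the proof of Theorem \ref{repad}: the factor $\lambda(z;\beta_0)$ acquires under $\hat\rho^\Psi_{z_1,z_2}$ a character of the form $\lambda(z;\tfrac{s}{4}\bar{z}_1\alpha_0^{-1}+\tfrac{s}{4}\bar{z}_2\alpha_0)$, while the factors $\lambda(z;-\tfrac{s}{4}\alpha_0^{-1}T_l^*)$ and $\lambda(z;-\tfrac{s}{4}\alpha_0T_r^*)$ contribute exactly the opposite characters (since $T_l,T_r$ transform as $T_l\mapsto T_l+\bar z_1$, $T_r\mapsto T_r+\bar z_2$ under $\hat\rho$), and the last factor is $\hat\rho^\Psi$-invariant because $\alpha_0$ is central. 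The second Landstad condition follows by computing $\lambda_{z_1,z_2}U^{\hb_0}_{z,t}\lambda_{z_1,z_2}^*$, which produces a correction of the form $\lambda\bigl(z;\ f(z_1,z_2,\alpha_0,\alpha_0^{-1})\bigr)\,U^{\hb_0}_{z,t}$; combined with $\ha_0\in A_0^\eta$, this yields norm continuity after multiplying by any $a\in A_0$.

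Next, I would verify that $(z,t)\mapsto U^{\hb_0}_{z,t}$ is a representation of $\mb{H}$. This amounts to checking the Heisenberg relation between the infinitesimal generators of the $z$- and $t$-directions; the non-trivial commutator of $\beta_0-\tfrac{s}{4}(\alpha_0^{-1}T_l^*+\alpha_0T_r^*)$ with its adjoint, which comes purely from the $(T_l,T_r)$-part, is exactly the central element $-\tfrac{s}{4}(|\alpha_0|^{-2}-|\alpha_0|^2)$ appearing in the $t$-factor. For strong continuity I would pass to the representation on $L^2(G_0)$ via the isomorphism analogous to \eqref{slicw}, derive the explicit action of $U^{\hb_0}_{z,t}$ on $f(\alpha_0,\beta_0)$ in direct analogy with \eqref{l2u}, and then run the same estimation that was used in Theorem \ref{repad}: by Theorem 4.14 of \cite{Kasp} one has $U^{\hb_0}_{z,t}\mc{Q}_0(f)=\mc{Q}_0(U^{\hb_0}_{z,t}f)$, and combining with Theorem \ref{A1} gives
\[
\|U^{\hb_0}_{z,t}\mc{Q}_0(f)-\mc{Q}_0(f)\|\leq c\max_{k,k',m,m'\leq 5}\sup_{g_0\in G_0}\bigl|\partial_l^{k*}\partial_l^{k'}\partial_r^{*m}\partial_r^{m'}(U^{\hb_0}_{z,t}f-f)\bigr|,
\]
which tends to zero as $(z,t)\to(0,0)$; density of $\mc{Q}_0(\C_c^\infty(G_0))$ in $A_0$ then yields strict continuity.

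For the third point I would differentiate the explicit $L^2(G_0)$-formula at $z=0$ to obtain $2\partial_z U^{\hb_0}_{z,0}f\big|_{z=0}=\Op(\beta_0)f$ for $f\in\C_c^\infty(G_0)$, and combine with \eqref{remdif1} and the intertwining relation with $\mc{Q}_0$ to get \eqref{acth}. To see that $\mc{Q}_0(\C_c^\infty(G_0))$ is a core, by Lemma \ref{lemcore} it suffices to establish density of \eqref{densbo}; this follows the pattern of the density argument for $(1+\ha^*\ha)\mc{Q}(\C_c^\infty(G))$ at the end of the proof of Theorem \ref{repad}, using the integral representation of $(1+\hb_0^*\hb_0)^{-1}$ analogous to \eqref{dena} to approximate $g=(1+\hb_0^*\hb_0)^{-1}f$ by a sequence $f_n\in\C_c^\infty(G_0)$ converging in all relevant mixed derivatives $\partial_l^k\partial_l^{*k'}\partial_r^m\partial_r^{*m'}$ of order $\leq 5$, and then invoking closedness of $\hb_0$.

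The main obstacle is the bookkeeping in the first step: computing $\hat\rho^\Psi$ on each of the four factors of $U^{\hb_0}$ and verifying that the $\lambda$-characters coming from $\lambda(z;\beta_0)$, $\lambda(z;-\tfrac{s}{4}\alpha_0^{-1}T_l^*)$ and $\lambda(z;-\tfrac{s}{4}\alpha_0T_r^*)$ cancel exactly. Because $\alpha_0^{-1}T_l^*$ and $\alpha_0T_r^*$ mix the weighting by $\ha_0^{\pm1}$ with the generators of the left and right $\Gamma$-actions, one has to keep careful track of which characters are produced by $\hat\rho$ versus by the $\lambda$-twist in \eqref{twistac}, and ensure the resulting commutator on the $t$-factor matches $-\tfrac{s}{4}(|\alpha_0|^{-2}-|\alpha_0|^2)$; this is the only non-routine computation in the argument.
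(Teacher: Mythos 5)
Your proposal follows exactly the route the paper intends: the paper omits the proof entirely, remarking only that it ``is similar to the proof of Theorem \ref{repad}'', and your adaptation of that proof (Landstad conditions via cancellation of the $\hat\rho^\Psi$-characters between $\lambda(z;\beta_0)$ and the $T_l^*,T_r^*$ factors, strict continuity via the quantization-map estimate of Theorem \ref{A1}, and the core/density argument via Lemma \ref{lemcore} and the analogue of \eqref{dena}) is the correct fleshing-out, consistent with the transformation $\beta_0\mapsto\beta_0-\alpha_0^{-1}z_1+\alpha_0 z_2$ under the two-sided $\Gamma$-shifts on $G_0$. No gaps.
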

\end{subsection}
\begin{subsection}{Affiliated element $\hb$}
After constructing the affiliated element $\hb_0\in A_0$ we shall now move on to the construction of $\hb$.
In order to do that we first have to introduce $\hb_{\hg}\in A_{\hg}^\eta$ which may be treated as a restriction
of $\hb$ to $A_{\hg}$.
The embedding of $A_{\hg}$ into $A$ leads to a morphism
$\pi_{\hg}\in\Mor(A,A_{\hg})$ which is defined by the
formula $\pi_{\hg}(a)a_{\hg}=aa_{\hg}$ where $a\in A$ and
$a_{\hg}\in A_{\hg}$. This morphism is injective, which enables us
to treat $\ha,\hg,\hd\in A^\eta$ as elements affiliated with
$A_{\hg}$. The injectivity of $\pi_{\hg}$ follows from the
 implication
$\left(az_{\hg}=0\right)\Rightarrow \left(a=0\right)$, which is
true for any $a\in A$. Note that $\hg$ treated as an element of
$A_{\hg}$ is invertible, i.e. there exists a unique element
$\hg^{-1}\in A_{\hg}^\eta$ strongly commuting with $\hg$ and such
that $\hg\hg^{-1}=\hg^{-1}\hg=1$. Moreover, elements
$\ha,\hd,\hg^{-1}\in A_{\hg}^\eta$ mutually strongly commute so
using Theorem \ref{thmc2} we may define $\hb_{\hg}$ by the formula
\begin{equation}\label{fordefb}\hb_{\hg}=\hg^{-1}(\ha\hd-1)\in A_{\hg}^\eta.\end{equation}

In order to give a more direct description of $\hb_{\hg}$, let us
introduce the differential operator:
\begin{equation}\label{opb}
\Op(\beta)=\beta-\frac{s}{4}\delta\partial^*_l-\frac{s}{4}\alpha\partial^*_r+
\frac{s^2}{16}\gamma\partial^*_l\partial^*_r.\end{equation} It is
easy to check that the determinant relation is satisfied
\[\Op(\alpha)\Op(\delta)-\Op(\gamma)\Op(\beta)=1\] where
$\Op(\gamma)$ denotes the operator of multiplication by $\gamma$.
The following lemma describes $\hb_{\hg}$ in terms of
$\Op(\beta)$.
\begin{lem}\label{hbg} Let $\hb_{\hg}\in A_{\hg}^\eta$ be the affiliated
element defined above. The set
\[\{\mc{Q}(f)z_{\hg}:f\in\C_c^\infty(G)\}\] is a core of
$\hb_{\hg}$ and  for any $f\in\C_c^\infty(G)$ we have
\begin{equation}\label{actbg}\hb_{\hg}\mc{Q}(f)z_{\hg}=\mc{Q}(\Op(\beta)f)z_{\hg}.
\end{equation} Moreover, the set
\begin{equation}\label{dbg}\{(1+\hb_{\hg}^*\hb_{\hg})\mc{Q}(f)z_{\hg}:f\in\C_c^\infty(G)\}\end{equation}
is dense in $A_{\hg}$.
\end{lem}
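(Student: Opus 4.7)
The plan is to prove the action formula \eqref{actbg} by a direct computation from the defining formula \eqref{fordefb} together with Theorem \ref{repad}, and then to deduce the core property from the density \eqref{dbg} via Lemma \ref{lemcore}.

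First I would verify \eqref{actbg}. Fix $f\in\C_c^\infty(G)$. By Theorem \ref{repad}, $\mc{Q}(f)\in\D(\hd)$ and $\hd\,\mc{Q}(f)=\mc{Q}(\Op(\delta)f)$. Since $\Op(\delta)$ is a first order differential operator with smooth coefficients it preserves $\C_c^\infty(G)$, so I may iterate: $\mc{Q}(\Op(\delta)f)\in\D(\ha)$ with $\ha\,\mc{Q}(\Op(\delta)f)=\mc{Q}(\Op(\alpha)\Op(\delta)f)$. Combining this with strong commutativity of $\ha$ and $\hd$ via Theorem \ref{thmc2} (so the composition equals the product $\ha\hd$) and the operator identity $\Op(\alpha)\Op(\delta)-\Op(\gamma)\Op(\beta)=1$ recorded just before the lemma, I obtain
\[(\ha\hd-1)\,\mc{Q}(f)=\mc{Q}(\gamma\,\Op(\beta)f)=\hg\,\mc{Q}(\Op(\beta)f),\]
where the last equality uses that $\hg$ acts on $L^2(G)$ as multiplication by $\gamma$ (Section \ref{hg}). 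Multiplying on the right by $z_{\hg}$ and applying $\hg^{-1}\in A_{\hg}^\eta$, which is well defined since $\hg$ is invertible in $A_{\hg}$, gives
\[\hb_{\hg}\,\mc{Q}(f)z_{\hg}=\hg^{-1}(\ha\hd-1)\,\mc{Q}(f)z_{\hg}=\mc{Q}(\Op(\beta)f)z_{\hg},\]
which is \eqref{actbg}. At each step I would check that the intermediate vectors land in the domains required for Theorem \ref{thmc2} to apply.

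To prove \eqref{dbg} I would follow the template of the density argument for $\ha$ in the proof of Theorem \ref{repad}. Since $\mc{Q}(\C_c^\infty(G))z_{\hg}$ is dense in $A_{\hg}$ and $\hb_{\hg}\,\eta\,A_{\hg}$, the elements $g=(1+\hb_{\hg}^*\hb_{\hg})^{-1}\mc{Q}(f)z_{\hg}$, as $f$ ranges over $\C_c^\infty(G)$, form a dense subset of $A_{\hg}$. Using functional calculus in the strongly commuting triple $(\ha,\hd,\hg^{-1})$ of $A_{\hg}^\eta$, I would represent such a $g$ by an integral analogous to \eqref{dena}, built from the smearing operators $I_\varepsilon^{\ha},I_\varepsilon^{\hd}$ of the proof of Theorem \ref{comrep} together with a resolvent kernel for $\hg^{-1}$. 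From this representation I would extract a sequence $f_n\in\C_c^\infty(G)$ whose iterated derivatives $\partial_l^k\partial_l^{*k'}\partial_r^m\partial_r^{*m'}f_n$ converge, for a sufficient range of $k,k',m,m'$, both to the corresponding derivatives of $g$ and of $(1+\Op(\beta)^*\Op(\beta))g$. Combining \eqref{actbg}, Theorem \ref{A1}, and closedness of $\hb_{\hg}$ would then yield $\mc{Q}(f)z_{\hg}=\lim_n(1+\hb_{\hg}^*\hb_{\hg})\mc{Q}(f_n)z_{\hg}$, so \eqref{dbg} follows and Lemma \ref{lemcore} finally delivers the core property.

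The main obstacle is this last approximation step. Unlike $\Op(\alpha)$ and $\Op(\delta)$, the operator $\Op(\beta)$ contains the second order cross-term $\tfrac{s^2}{16}\gamma\,\partial_l^*\partial_r^*$ visible in \eqref{opb}, so $(1+\Op(\beta)^*\Op(\beta))$ is a fourth order differential expression; one must consequently control iterated derivatives of $f_n$ up to a correspondingly higher order and in more directions simultaneously. Producing $f_n$ and verifying the required joint convergence therefore calls for a more delicate cut-off and smoothing construction than the one used for $\ha,\hd$ in Theorem \ref{repad} or for $\hb_0$ in Section \ref{qbs}, and will rely crucially on the centrality of $\hg^{-1}$ in $A_{\hg}^\eta$ in order to rewrite $(1+\hb_{\hg}^*\hb_{\hg})^{-1}$ in a form directly accessible to the quantization machinery of Appendix \ref{Aqm}.
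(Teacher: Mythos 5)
Your treatment of \eqref{actbg} matches the paper's (which simply cites \eqref{fordefb} and point 3 of Theorem \ref{repad}), and your plan to deduce the core property from \eqref{dbg} via Lemma \ref{lemcore} is also what the paper does. The genuine gap is in your argument for the density \eqref{dbg}. You propose to represent $g=(1+\hb_{\hg}^*\hb_{\hg})^{-1}\mc{Q}(f)z_{\hg}$ by ``an integral analogous to \eqref{dena}''. But \eqref{dena} rests on formula \eqref{asa}, which expresses $\exp(-t\ha^*\ha)$ as an explicit smearing of the unitaries $U^{\ha}_{z,0}$ precisely because $\ha$ is by construction the complex generator of a representation of the Heisenberg group. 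No such representation $U^{\hb_{\hg}}$ has been constructed on $A_{\hg}$ (the element $\hb_{\hg}$ is defined algebraically as the product $\hg^{-1}(\ha\hd-1)$, not as an infinitesimal generator), so there is no kernel $h_t$ available for $\exp(-t\hb_{\hg}^*\hb_{\hg})$, and the proposed integral representation with ``a resolvent kernel for $\hg^{-1}$'' does not exist within the machinery of the paper. You yourself flag the resulting approximation problem (control of the fourth-order derivatives coming from $(1+\Op(\beta)^*\Op(\beta))$) as ``the main obstacle'' without resolving it; as written, the density \eqref{dbg} is not established.

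The paper circumvents exactly this difficulty by never inverting $(1+\hb_{\hg}^*\hb_{\hg})$ directly. It factors $\hb_{\hg}=\hg^{-1}T$ with $T=\ha\hd-1$, checks that $T$ strongly commutes with $\ha^*\ha+\hd^*\hd$, and observes that
\[T'=(1+T^*T)\exp(-\ha^*\ha-\hd^*\hd)\]
is a \emph{bounded} element of $\M(A_{\hg})$ (expand the product; every factor such as $\ha\exp(-\ha^*\ha)$ is bounded). Since $T'\D(T^*T)=\exp(-\ha^*\ha-\hd^*\hd)A_{\hg}$ is dense, boundedness of $T'$ gives density of $T'\mc{Q}(\C_c^\infty(G))z_{\hg}$, and an approximation argument of the type you envisage --- but applied only to $a=\exp(-\ha^*\ha-\hd^*\hd)\mc{Q}(f)z_{\hg}$, which \emph{is} accessible through \eqref{asa} --- yields that $(1+T^*T)\mc{Q}(\C_c^\infty(G))z_{\hg}$ is dense in $A_{\hg}$. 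The passage from $T$ to $\hb_{\hg}=\hg^{-1}T$ is then purely algebraic, via Lemma \ref{lemc5} on products of strongly commuting affiliated elements together with the inclusion $(1+|\hg|^{-2})^{-1}\mc{Q}(\C_c^\infty(G))z_{\hg}\subset\mc{Q}(\C_c^\infty(G))z_{\hg}$. The intermediate operator $T$ and the Gaussian damping trick are the ideas missing from your proposal.
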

\begin{proof} Formula \eqref{actbg} follows  from equation \eqref{fordefb}
 and point 3 of Theorem \ref{repad}.
In order to prove \eqref{dbg} we
introduce the affiliated element $T=\ha\hd-1\in A_{\hg}^\eta$.
Obviously, we have $\hb_{\hg}=\hg^{-1}T$ so we will base the analysis of $\hb_{\hg}$  on the analysis of $T$. Let
us check that $T$ and $\ha^*\ha+\hd^*\hd$ strongly commute:
\begin{equation}
\begin{array}{rcl}
\exp(it(\ha^*\ha+\hd^*\hd))T\exp(it(\ha^*\ha+\hd^*\hd))&&\\\hsp{\hspace*{-4cm}=}\hspace*{-2cm}
\exp(it\ha^*\ha)\,\ha\exp(-it\ha^*\ha)\exp(it\hd^*\hd)\,\hd\exp(-it\hd^*\hd)-1\\
\hsp{\hspace*{-4cm}=}\hspace*{-2cm}\exp(its\hg^*\hg)\,\ha\exp(-its\hg^*\hg)\,\hd-1=T.
\end{array}
\end{equation}
Using Theorem \ref{c1} we define
\begin{equation}\label{tpr}T'=(1+T^*T)\exp(-\ha^*\ha-\hd^*\hd)\in A_{\hg}^\eta.\end{equation}
The equality\,\,\,
$\displaystyle\exp(-\ha^*\ha-\hd^*\hd)=\exp(-\ha\ha^*-\hd\hd^*)$  \,\,\,implies
 that
\begin{equation}\label{sumt}\begin{array}{rcl}T'\hsp{=}2\exp(-\ha^*\ha-\hd^*\hd)+\ha^*\ha\exp(-\ha^*\ha)\hd^*\hd\exp(-\hd^*\hd)
\\\hsp{-}\ha\exp(-\ha^*\ha)\hd\exp(-\hd^*\hd)-\ha^*\exp(-\ha\ha^*)\hd^*\exp(-\hd^*\hd).\end{array}\end{equation}
All factors of the above sum belong to $\M(A_{\hg})$, hence the resulting operator 
 $T'$ also belongs to $\M(A_{\hg})$. Note that
\begin{equation}\label{sumt1}T'\D(T^*T)=\exp(-\ha^*\ha-\hd^*\hd)(1+T^*T)\D(T^*T)=
\exp(-\ha^*\ha-\hd^*\hd)A_{\hg}.\end{equation} The right-hand
side of \eqref{sumt1} is dense in $A_{\hg}$, hence using the boundedness
of $T'$ and the density of $\D(T^*T)$ in $A_{\hg}$ we conclude
that the set $T'\mc{Q}(\C_c^\infty(G))z_{\hg}$ is also dense:
\begin{equation}\label{dent1}\overline{T'\mc{Q}(\C_c^\infty(G))z_{\hg}}^{\|\cdot\|}=A_{\hg}.\end{equation} 

We shall now prove that the density \eqref{dent1} implies the density \eqref{dbg}. 
Let
\begin{equation}\label{aintr} a=\exp(-\ha^*\ha-\hd^*\hd)\mc{Q}(f)z_{\hg}\end{equation} for some
$f\in\C_c^\infty(G)$. Using  formula \eqref{asa} one can check that there exists a sequence
$f_n\in\C_c^\infty(G)$ such that
\begin{equation}\label{parcon1} \begin{array}{c}\displaystyle\lim_{n\rightarrow
\infty}\partial_l^k\partial_l^{*k'}\partial_r^m\partial_r^{*m'}f_n
z_{\gamma}
=\partial_l^k\partial_l^{*k'}\partial_r^m\partial_r^{*m'}\exp(-\ha^*\ha-\hd^*\hd)fz_{\gamma}\\
\displaystyle\lim_{n\rightarrow
\infty}\partial_l^k\partial_l^{*k'}\partial_r^m\partial_r^{*m'}(1+T^*T)f_n
z_{\gamma}
=\partial_l^k\partial_l^{*k'}\partial_r^m\partial_r^{*m'}(1+T^*T)\exp(-\ha^*\ha-\hd^*\hd)fz_{\gamma}\end{array}\end{equation}
for any $k,k',m,m'\leq 5$, where in the above formulas   we treat
$T=\ha\hd-1$ as an operator acting on
$L^2(G)$. It may be shown  that the convergence in \eqref{parcon1} is in the
uniform topology on $\C_0(G)$. Using  Theorem
\ref{A1} and the closedness of $1+T^*T$ we see that
\begin{equation}\label{dent2}(1+T^*T)a=\lim_{n\rightarrow\infty}(1+T^*T)\mc{Q}(f_n)z_{\hg}.\end{equation}
Combining  \eqref{dent1}, \eqref{aintr} and \eqref{dent2} we get
\[\displaystyle\overline{(1+T^{*}T)\mc{Q}(\C_c^\infty(G))z_{\hg}}^{\|\cdot\|}=A_{\hg}.\]
Using the above equality, the fact that  $\hb_{\hg}=\hg^{-1}T$ and Lemma \ref{lemc5} we
get
\begin{equation}\label{fina}\overline{(1+\hb_{\hg}^*\hb_{\hg})(1+|\hg|^{-2})^{-1}\mc{Q}(\C_c^\infty(G))z_{\hg}}^{\|\cdot\|}=A_{\hg}.\end{equation}
The inclusion
\[(1+|\hg|^{-2})^{-1}\mc{Q}(\C_c^\infty(G))z_{\hg}\subset
\mc{Q}(\C_c^\infty(G))z_{\hg}\] and equation \eqref{fina} show that
\[\overline{(1+\hb_{\hg}^*\hb_{\hg})\mc{Q}(\C_c^\infty(G))z_{\hg}}^{\|\cdot\|}=A_{\hg}.\]
This proves \eqref{dbg}. Now from  Lemma \ref{lemcore} it follows that $\mc{Q}(\C_c^\infty(G))z_{\hg}$ is a core of $\hb_{\hg}$ which ends the proof of our
lemma.
\end{proof}
Using $\hb_{\hg}\in A_{\hg}^\eta$ defined above and
 $\hb_0\in A_0^\eta$ defined in the previous section,
we construct the affiliated element $\hb\in A^\eta$.
Heuristically speaking, it is a gluing of $\hb_{\hg}$ and $\hb_0$.
\begin{thm} Let $\Op(\beta)$ be the differential operator  \eqref{opb}.
There exists an affiliated element $\hb\in A^\eta$ such that the
set $\{\mc{Q}(f):f\in\C_c^\infty(G)\}$ is a core of $\hb$ and
\begin{equation}\hb\mc{Q}(f)=\mc{Q}(\Op(\beta)f)\end{equation} for
any $f\in\C_c^\infty(G)$.
\end{thm}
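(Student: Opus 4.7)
The plan is to construct $\hb$ by gluing the already built $\hb_{\hg}\in A_{\hg}^\eta$ and $\hb_0\in A_0^\eta$ along the exact sequence $0\to A_{\hg}\to A\to A_0\to 0$ of \eqref{exseq}. Concretely, I would first introduce the linear operator $\hb'$ with domain $\mc{Q}(\C_c^\infty(G))\subset A$ by $\hb'\mc{Q}(f):=\mc{Q}(\Op(\beta)f)$. This is well defined because $\Op(\beta)f\in\C_c^\infty(G)$ whenever $f\in\C_c^\infty(G)$, and $\mc{Q}(\C_c^\infty(G))$ is already known to be norm dense in $A$.

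The next step is to match $\hb'$ against $\hb_{\hg}$ and $\hb_0$ on the two sides of the exact sequence. Multiplying by the central multiplier $z_{\hg}$ and invoking formula \eqref{actbg} of Lemma \ref{hbg}, one obtains $\hb'\mc{Q}(f)\cdot z_{\hg}=\hb_{\hg}(\mc{Q}(f)z_{\hg})$, so $\hb'$ restricts to $\hb_{\hg}$ on the ideal side. On the quotient side, the vanishing $\pi_0(\gamma)=0$ collapses the last two summands of \eqref{opb} and sends $\alpha,\delta$ to $\alpha_0,\alpha_0^{-1}$, producing the identity $\pi_0(\Op(\beta)f)=\Op(\beta_0)\pi_0(f)$. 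Combined with the naturality $\pi_0^\Psi\circ\mc{Q}=\mc{Q}_0\circ\pi_0$ of the twisted morphism (Theorem 3.18 of \cite{Kasp}), this yields $\pi_0^\Psi(\hb'\mc{Q}(f))=\hb_0\mc{Q}_0(\pi_0 f)$. Closability of $\hb'$ is then immediate: if $\mc{Q}(f_n)\to 0$ and $\hb'\mc{Q}(f_n)\to b$, multiplying by $z_{\hg}$ and using the closedness of $\hb_{\hg}$ gives $b\cdot z_{\hg}=0$, and the injectivity of $\pi_{\hg}$ recalled in Section \ref{qbs} forces $b=0$. Denote the closure by $\hb$.

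To conclude $\hb\in A^\eta$, the remaining task is to prove that $(1+\hb^*\hb)\mc{Q}(\C_c^\infty(G))$ is norm dense in $A$; Lemma \ref{lemcore} will then automatically confirm that $\mc{Q}(\C_c^\infty(G))$ is a core of $\hb$, and the defining formula $\hb\mc{Q}(f)=\mc{Q}(\Op(\beta)f)$ will follow by construction. I would establish density by a two-stage approximation along the exact sequence. Given $a\in A$, first use \eqref{densbo} to write $\pi_0^\Psi(a)$ as a norm limit of $(1+\hb_0^*\hb_0)\mc{Q}_0(g_n)$, lift each $g_n\in\C_c^\infty(G_0)$ to some $\tilde f_n\in\C_c^\infty(G)$ with $\pi_0(\tilde f_n)=g_n$, and use the matching to reduce the problem modulo arbitrary error to approximating an element of $\ker\pi_0^\Psi=A_{\hg}$. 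The density statement \eqref{dbg} of Lemma \ref{hbg} then closes the remaining gap via $(1+\hb_{\hg}^*\hb_{\hg})\mc{Q}(h_m)z_{\hg}$, which by the matching equals $(1+\hb^*\hb)(\mc{Q}(h_m)z_{\hg})$.

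The main obstacle will be the technical identification of $\hb^*\hb$ with $\hb_{\hg}^*\hb_{\hg}$ after multiplying by $z_{\hg}$, and with $\hb_0^*\hb_0$ after applying $\pi_0^\Psi$, since a priori $\hb$ is only the abstract closure of $\hb'$. Establishing this requires an approximation argument of the same flavour as \eqref{parcon1}--\eqref{dent2} in the proof of Lemma \ref{hbg}: one approximates $(1+\Op(\beta)^*\Op(\beta))f$ by smooth functions of compact support with controlled derivatives up to order five, and then passes this through the uniform quantization estimate of Theorem \ref{A1}. Once this $\C^\infty$-compatibility is in place, the strong commutativity of $\hb$ with $z_{\hg}$ and the matching with both $\hb_{\hg}$ and $\hb_0$ follow, and the construction is complete.
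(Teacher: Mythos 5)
Your overall strategy coincides with the paper's: both proofs glue $\hb_{\hg}$ and $\hb_0$ along the exact sequence \eqref{exseq}, and your two-stage density argument for $(1+\Op(\beta)^*\Op(\beta))\mc{Q}(\C_c^\infty(G))$ --- approximating $\pi_0^\Psi(a)$ via \eqref{densbo}, lifting to $\C_c^\infty(G)$, and closing the remaining gap in $A_{\hg}$ via \eqref{dbg} --- is exactly the paper's proof of the key density property. The matching identities $\pi_0^\Psi\circ\mc{Q}=\mc{Q}_0\circ\pi_0$ and $\pi_0(\Op(\beta)f)=\Op(\beta_0)\pi_0(f)$, and your closability argument via $z_{\hg}$ and the injectivity of $\pi_{\hg}$, are all sound.

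The gap is in the final step, where you pass from this density to $\hb\in A^\eta$. For a closed, densely defined operator on the Hilbert module $A$, affiliation does \emph{not} follow from closedness together with density of the range of $1+\hb^*\hb$; one must also verify (i) that $\Graph(\hb)$ is a right $A$-submodule of $A\oplus A$, and (ii) that the adjoint is densely defined, i.e.\ that $(\Graph(\hb))^\perp$ contains the flipped graph of a densely defined operator. These are precisely the hypotheses of Proposition 2.2 of \cite{Worun}, which is the tool the paper actually invokes, and neither appears in your proposal. Point (ii) is settled in the paper by exhibiting the explicit elements $(-\mc{Q}(\Op(\beta)^*f),\,\mc{Q}(f))\in(\Graph(\hb))^\perp$ for $f\in\C_c^\infty(G)$; without this you cannot even assert $\mc{Q}(\Op(\beta)f)\in\D(\hb^*)$, so the expression $(1+\hb^*\hb)\mc{Q}(f)$ that you want to identify with $\mc{Q}((1+\Op(\beta)^*\Op(\beta))f)$ is not yet meaningful. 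Point (i) is genuinely delicate for your definition of $\hb$ as the closure of the restriction to $\mc{Q}(\C_c^\infty(G))$, since $\mc{Q}(f)a$ need not again be a quantized smooth function; the paper sidesteps this by \emph{defining} $\Graph(\hb)$ as the set of pairs $(b,b')$ with $(ba_{\hg},b'a_{\hg})\in\Graph(\hb_{\hg})$ for all $a_{\hg}\in A_{\hg}$, which is manifestly module-stable and closed, and only afterwards checks that $\mc{Q}(\C_c^\infty(G))$ is a core. Note finally that Lemma \ref{lemcore} assumes $T\,\eta\,A$ from the outset, so it can deliver the core statement only after affiliation has been established; it cannot be used to bootstrap affiliation itself.
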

\begin{proof}
Let $\Graph{\hb_{\hg}}$ be the graph of the
affiliated element $\hb_{\hg}$. It is easy to check that the set:
\[\left\{\begin{pmatrix}
  b \\
  b'
\end{pmatrix}:\begin{pmatrix}
  b\,a_{\hg} \\
  b'a_{\hg}
\end{pmatrix}\in\Graph(\hb_{\hg}),  \mbox{ for any }a_{\hg}\in A_{\hg}\right\}\subset A\oplus A\]
is a graph of a closed operator acting on $A$. This operator will
be denoted by $\hb$. Let us list some properties of $\Graph{\hb}$:
 \begin{itemize}
\item[1.]
$\Graph{\hb}\subset A\oplus A$ is a submodule of a Hilbert
$A$-module $A\oplus A$\vspace{0,25cm};
\item[2.] For any $f\in\C_c^\infty(G)$ we have
$\begin{pmatrix}
  \mc{Q}(f) \\
  \mc{Q}(\Op(\beta)f)
\end{pmatrix}\in\Graph{\hb}$;
\item[3.] Let $(\Graph{\hb})^\perp$ be the
submodule perpendicular to $\Graph{\hb}$:
\[(\Graph{\hb})^\perp=\left\{\begin{pmatrix}
  c \\
  c'
\end{pmatrix}:c^*a+c'^*a'=0 \mbox{ for any } \begin{pmatrix}
  a \\
  a'
\end{pmatrix}\in\Graph\hb\right\}.\]
For any $f\in\C_c^\infty(G)$ we have $\begin{pmatrix}
  -\mc{Q}(\Op(\beta)^*f) \\
  \mc{Q}(f)
\end{pmatrix}\in(\Graph{\hb})^\perp$;
\item[4.]
$\overline{\{\mc{Q}((1+\Op(\beta)^*\Op(\beta))f):f\in\C_c^\infty(G)\}}^{\|\cdot\|}=A$.
\end{itemize}
Properties 1, 2 and 3 are consequnces of the definition of $\hb$
and Lemma \ref{hbg}. In what follows we shall prove the last of
the above
properties:\[\overline{\{\mc{Q}((1+\Op(\beta)^*\Op(\beta))f):f\in\C_c^\infty(G)\}}^{\|\cdot\|}=A.\]
Let $a\in A$ and $\pi^\Psi_0\in\Mor(A,A_0)$ be the morphism entering the exact sequence \eqref{exseq}. Using \eqref{densbo} we can see that there exists a
sequence $\tilde{f}_n\in\C_c^\infty(G_0)$ such that
\begin{equation}\label{qi}\pi^\Psi_0(a)=\lim_{n\rightarrow
\infty}\mc{Q}_0((1+\Op(\beta_0)^*\Op(\beta_0))\tilde{f}_n).\end{equation}
Let $f_n\in\C_c^\infty(G)$ be an extension of $\tilde{f}_n$ to the
whole group $G$ and let $\pi_0\in\Mor(\C_0(G),\C_0(G_0))$ be the morphism
introduced in Section \ref{qbs}. It is not difficult to check that
\begin{itemize}
\item $\pi_0^\Psi(\mc{Q}(f))=\mc{Q}_0(\pi_0(f))$,
\item
$\pi_0(\Op(\beta)f)=\Op(\beta_0)\pi_0(f)$ \end{itemize}
 Using these equalities and \eqref{qi}  we see
that \begin{equation}\label{conf}\lim_{n\rightarrow
\infty}\pi^\Psi_0(a-\mc{Q}((1+\Op(\beta)^*\Op(\beta))f_n))=0.\end{equation}
The exactness of  sequence \eqref{exseq} ensures
 that for any $\ep>0$ there exists $n\in\mb{N}$ and $a_{\hg}\in
A_{\hg}$ such that
\begin{equation}\label{ine1}\|a-\mc{Q}((1+\Op(\beta)^*\Op(\beta))f_n)-a_{\hg}\|\leq\ep.\end{equation}
Equality \eqref{dbg} implies that there exists a function
$f\in\C_c^\infty(G)$ such that
\begin{equation}\label{ine2}\|a_{\hg}-\mc{Q}((1+\Op(\beta)^*\Op(\beta))f)z_{\hg}\|\leq\ep.\end{equation}
Combining  \eqref{ine1} and \eqref{ine2}  we get
\[\|a-\mc{Q}((1+\Op(\beta)^*\Op(\beta))(f_n+fz_{\gamma}))\|\leq2\ep.\]
This ends the proof of point 4 of our list.

Using the  properties of  $\Graph{\hb}$ one can  check that it
satisfies all the assumptions of Proposition 2.2 of \cite{Worun}.
This proposition guaranties that $\hb\in A^\eta$. It is easy to
check that so constructed $\hb$ satisfies all the requirements of
our theorem.

\end{proof}
\end{subsection}
\begin{subsection}{Representation theory of $\C^*$-algebra
$A$} The results of Appendix \ref{ApD} applied to the
$\C^*$-algebra $A$ of the Heisenberg-Lorentz quantum group
$\mb{G}$ show that the representation theory of $A$ can be
equivalently described by the corepresentation theory of the dual
quantum group $\widehat{\mb{G}}$. As was shown in \cite{Kasp}, the $\C^*$-algebra of $\widehat{\mb{G}}$ is the reduced group $\C^*$-algebra $\C_r^*(G)$. The comultiplication $\Delta_{\widehat{\mb{G}}}$ is the $2$-cocycle twist of the standard comultiplication $\Hat\Delta$ on $\C_r^*(G)$ \begin{equation}\label{twcomul}\Delta_{\widehat{\mb{G}}}(a)= X\Hat\Delta(a)X^*\end{equation} for any $a\in\C_r^*(G)$. The unitary  $X\in\M(\C_r^*(G)\otimes \C_r^*(G))$ is the image of $\Psi\in\M(\C^*(\Gamma)\otimes\C^*(\Gamma))$ (see equation \eqref{dualbi}) under a morphism which sends the generator $u_\gamma\in\M(\C^*(\Gamma))$ to the right shift $R_\gamma\in\M(\C_r^*(G))$. 

Let $\pi_U\in\Rep(A;\mc{H})$ be a representation of $A$ on a Hilbert space $\mc{H}$:
$\pi_U\in\Rep(A;\mc{H})$. The corresponding corepresentation
$U_{\pi}\in\M(\mathcal{K}(H)\otimes \hat A)$ is given by
\begin{equation}\label{corupi}U_\pi=(\pi_U\ot\id)\widehat{W}\end{equation} where $\widehat{W}\in\M(A\ot\Hat{A})$
is the multiplicative unitary of $\widehat{\mb{G}}$. On the other hand, giving a motivation for Definition \ref{repquar} we claimed that representations of the Heisenberg-Lorentz commutation relations correspond to representations of $A$ on Hilbert spaces. To prove this fact we will show that for any representation $(\ta,\tb,\tg,\td)$ on a Hilbert space $\mc{H}$ we can construct a corepresentation $U$ of $\widehat{\mb G}$ on $\mc{H}$, which in turn corresponds via \eqref{corupi} to  a representation $\pi\in\Rep(A;\mc{H})$. This construction of $\pi$ is performed in the proof of the next theorem, were we also give a more direct characterization of $\pi$ in terms of $\ha,\hb,\hg,\hd\in A^\eta$.
\begin{thm}\label{repth} Let $(\ta,\tb,\tg,\td)$ be a representation
of the Heisenberg-Lorentz commutation relations on a Hilbert space
$\mc{H}$ (c.f. Definition \ref{repquar}). There exists a unique
representation $\pi\in\Rep(A;\mc{H})$ such that:
\begin{equation}\label{indrep}\pi(\ha)=\ta,\quad \pi(\hb)=\tb,\quad  \pi(\hg)=\tg,\quad
\pi(\hd)=\td.\end{equation} Moreover, for any
$\pi\in\Rep(A;\mc{H})$ the quadruple
$(\pi(\ha),\pi(\hb),\pi(\hg),\pi(\hd))$ is a representation of the
Heisenberg-Lorentz commutation relations.
\end{thm}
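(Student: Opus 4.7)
The plan is to separately treat uniqueness, the converse assertion, and the existence of $\pi$ for a given $(\ta,\tb,\tg,\td)$. Uniqueness is immediate from the fact that $\ha,\hb,\hg,\hd$ generate $A$ in the sense of Woronowicz, so any $\pi\in\Rep(A;\mc{H})$ is determined by its values on these four affiliated elements.

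For the converse direction one verifies the conditions of Definition \ref{repquar} by invoking the structure of the generators in $A^\eta$. The operator $\pi(\hg)$ is normal since $\hg$ is normal, and because $z_{\hg}$ lies in the center of $\M(A)$ its kernel projection commutes with $\pi(A)$, producing a splitting $\mc{H}=\mc{H}_0\oplus\mc{H}_1$ that is simultaneously respected by $\pi(\ha),\pi(\hb),\pi(\hd)$. On $\mc{H}_1$, applying $\pi$ to the strictly continuous Heisenberg-group representations $U^{\ha},U^{\hd}$ into $\M(A)$ of Theorem \ref{repad} produces strongly continuous Heisenberg representations whose infinitesimal generators are $\pi(\ha),\pi(\hd)$; mutual strong commutativity follows from relations \eqref{hlrel1} together with centrality of $z_{\hg}$; point 4 of Definition \ref{repquar} is the image of the determinant relation $\ha\hd-\hb\hg=1$. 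On $\mc{H}_0$ the representation factors through $\pi_0^{\Psi}$ by the exact sequence \eqref{exseq}, and the pair $(\pi(\ha),\pi(\hb))$ inherits its Heisenberg-Lorentz structure from the corresponding representation of the quantum Borel algebra $A_0$, with $\pi(\hd)=\pi(\ha)^{-1}$ coming from the determinant relation read modulo $A_{\hg}$.

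For existence the strategy is the one outlined in the paragraph preceding the theorem: one constructs a corepresentation $U\in\M(\mc{K}(\mc{H})\ot\C_r^*(G))$ of $\widehat{\mb{G}}$ from the given data, and then recovers $\pi$ through the slice formula \eqref{corupi}. By Remark \ref{dirint} one may reduce to the three model cases \eqref{list01} in which $\tg$ or $\ta$ is a nonzero scalar; the Stone--von Neumann theorem then pins down the remaining operators up to multiplicity, and $U$ is built as an ordered product of Weyl-type unitaries $\exp(i\Im(\,\cdot\,\ta))$, $\exp(i\Im(\,\cdot\,\tb))$, $\exp(i\Im(\,\cdot\,\tg))$, $\exp(i\Im(\,\cdot\,\td))$ paired with the canonical images in $\M(\C_r^*(G))$ of the coordinate functions $\alpha,\beta,\gamma,\delta$, corrected by the twist $X$ that accounts for the cocycle appearing in the twisted comultiplication \eqref{twcomul}.

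The main obstacle is the verification of the corepresentation identity $(\id\ot\Delta_{\widehat{\mb{G}}})(U)=U_{12}U_{13}$, which is essentially a Hilbert-space version of Theorem \ref{comrep}: carried through the correspondence between representations and corepresentations, this identity becomes the statement that the ``comultiplied'' quadruple $(\ta'',\tb'',\tg'',\td'')$ of \eqref{delact} is again a Heisenberg-Lorentz representation, specialized to the case where the two factors coincide with the data just built from $(\ta,\tb,\tg,\td)$. Once $U$ has been produced and shown to be a corepresentation, the identification $\pi(\ha)=\ta$, $\pi(\hb)=\tb$, $\pi(\hg)=\tg$, $\pi(\hd)=\td$ follows by comparing the Weyl-type building blocks of $U$ with the Heisenberg-group representations $U^{\ha},U^{\hd}$ and their analogues for $\hb_0,\hb_{\hg}$ used to define the four generators in Sections \ref{ha}--\ref{qbs}.
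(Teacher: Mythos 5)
Your high-level strategy for existence (build a corepresentation $U$ of $\widehat{\mb{G}}$ and recover $\pi$ via \eqref{corupi}) is the paper's strategy, but the proposal has two concrete problems. First, the uniqueness argument is circular in the paper's logical order: Theorem \ref{gen} (that $\ha,\hb,\hg,\hd$ generate $A$) is proved \emph{via} Theorem \ref{congen}, whose first hypothesis is precisely that the generators separate representations, and that separation property is Corollary \ref{sep}, i.e.\ a consequence of the very theorem you are proving. You cannot invoke generation here. The paper gets uniqueness instead from the bijection between $\Rep(A;\mc{H})$ and corepresentations of $\widehat{\mb{G}}$ together with the explicit factorization $\widehat{W}=R(\hd\hg^{-1})S(\hg)R(\ha\hg^{-1})$ of \eqref{hw}: any $\pi$ satisfying \eqref{indrep} forces $(\pi\ot\id)\widehat{W}=R(\td\tg^{-1})S(\tg)R(\ta\tg^{-1})$, which pins $\pi$ down.

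Second, the construction and verification of $U$ are not correctly described. The paper's unitary is $U=R(\td\tg^{-1})S(\tg)R(\ta\tg^{-1})$, where $R(\tilde a)=\int U^{\tilde a}_{-z,0}\ot dE^{R}(z)$ has second leg built from the spectral measure of the right regular representation of $\Gamma$ and $S(\tg)=\int dE^{\tg}(w)\ot R_{g_w}$ uses right shifts by the Weyl-type group elements $g_w$; the ``coordinate functions $\alpha,\beta,\gamma,\delta$'' do not live in $\M(\C_r^*(G))$ and cannot serve as the second legs, $\tb$ does not appear in the formula at all, and no Stone--von Neumann reduction is needed. More importantly, the corepresentation identity $(\id\ot\Delta_{\widehat{\mb{G}}})U=U_{12}U_{13}$ is \emph{not} a Hilbert-space version of Theorem \ref{comrep}: it concerns the twisted comultiplication on the second leg $\C_r^*(G)$ and is verified by the direct spectral-measure computations \eqref{cs}--\eqref{cr} involving the cocycle $X$, whereas Theorem \ref{comrep} concerns $\Delta$ on $A$ and tensor products of Hilbert-space representations (it is used later, in Theorem \ref{delg}, not here). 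By identifying the two you leave the actual verification undone. Finally, after producing $\pi_U$ one still has to prove $\pi_U(\ha)=\ta$, etc.; the paper does this by applying $\pi_U\ot\id$ to $\widehat{W}^*(1\ot\hg)\widehat{W}=\ha\ot\hg\dotplus\hg\ot\hd$ and then $\id\ot\pi_0^\Psi$ to isolate $\pi_U(\hg)=\tg$, a step your ``comparison of building blocks'' does not supply.
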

\begin{proof}
We shall start by fixing some notation. Given any
$w\in\mb{C}\setminus\{0\}$, $g_w$ will denote an element of $G$ of the form:
\[g_w=\begin{pmatrix}
  0& w^{-1} \\
  -w & 0
\end{pmatrix}.\]
Let $T$ be a normal, invertible element acting on a Hilbert space
$\mc{H}$. We define the unitary operator:
\begin{equation}\label{sn}S(T)=\int dE^T(w)\ot
R_{g_w} \in\B(\mc{H}\ot L^2(G))\end{equation} where $E^T$ is the
spectral measure of $T$ and $R_{g_w}\in\B(L^2(G))$ is the right
shift by $g_w$. Let  $E^{R}$ be the  spectral measure that
corresponds to the representation $\Gamma\ni\gamma\mapsto
R_\gamma\in\B(L^2(G))$ via the S.N.A.G. theorem. For an
infinitesimal representation $(\tilde a,\tilde \lambda)$ of
the Heisenberg group $\mb{H}$ on a Hilbert space $\mc{H}$ we introduce the unitary
operator $R(\tilde a)\in\B(\mc{H}\ot L^2(G))$:
\begin{equation}\label{ra}R(\tilde a)=\int\, U^{\tilde a}_{-z,0}\ot
dE^{R}(z).
\end{equation}

Let us move on to the main part of the proof. An immediate
consequence of Definition \ref{repquar} is that it is enough to
consider two cases of represenations:
\begin{itemize}
\item[1.] $\tg=0$;
\item[2.] $\ker\tg=\{0\}$.
\end{itemize} We give the
proof for the second case leaving the first one to the reader.
Using Theorem \ref{thmc2} we define two closed operators
$\ta\tg^{-1}$ and $\td\tg^{-1}$ acting on $\mc{H}$. Note that
$(\ta\tg^{-1}, -s)$ and $(\td\tg^{-1},s)$ are infinitesimal
representations of $\mb{H}$. Using the notation introduced above
we define the unitary operator $U\in\B(\mc{H}\ot L^2(G))$:
\begin{equation}\label{urs}U=R(\td\tg^{-1})S(\tg)R(\ta\tg^{-1}).\end{equation}
Let us prove that U is a corepresentation of $\widehat{\mb{G}}$.
Let $\Hat\Delta\in\Mor(\C_r^*(G);\C_r^*(G)\ot\C_r^*(G))$ be the
canonical comultiplication on $\C_r^*(G)$. Note that
\begin{equation}\label{cs}
\begin{array}{rcl} (\id\ot\Hat\Delta)R(\td\tg^{-1})\hsp{=}\displaystyle\int\,
U^{\td\tg^{-1}}_{-(z+z'),0}\ot dE^{R}(z)\ot dE^{R}(z')\\\hsp{=}
\displaystyle\int\,\exp\left(-i\frac{s}{4}\Im(z\bar{z'})\right)U^{\td\tg^{-1}}_{-z,0}U^{\td\tg^{-1}}_{-z',0}
\ot dE^{R}(z)\ot
dE^{R}(z')\\\hsp{=}X^*_{23}R(\td\tg^{-1})_{12}R(\td\tg^{-1})_{13}.
\end{array}\end{equation}
The unitary element $X\in\M(\C_r^*(G)\ot\C_r^*(G))$ used above:
\begin{equation}\displaystyle X=\int\,\exp\left(i\frac{s}{4}\Im(z\bar{z}')\right)
dE^{R}(z)\ot dE^{R}(z')\end{equation}  is the one that twists
$\Hat\Delta$, giving the comultiplication $\Delta_{\widehat{\mb{G}}}$ (see \eqref{twcomul}).
Similarly, we check that
\begin{equation}\label{ct} (\id\ot\Hat\Delta)R(\ta\tg^{-1})=R(\ta\tg^{-1})_{12}R(\ta\tg^{-1})_{13}X_{23}.
\end{equation}
Moreover, the formula $\Hat\Delta(Z_w)=Z_w\ot Z_w$ implies that:
\begin{equation}\label{cr} (\id\ot\Hat\Delta)S(\tg)=S(\tg)_{12}S(\tg)_{13}.
\end{equation}
Using equations \eqref{cs}, \eqref{ct}, \eqref{cr}, the fact that
the first legs of $R(\ta\tg^{-1}),R(\td\tg^{-1})$ and $S(\tg)$
commute and  formula \eqref{twcomul} we get:
\begin{eqnarray*}
(\id\ot\Hat\Delta^\Psi)U\hsp{=}X_{23}(\id\ot\Hat\Delta)(R(\td\tg^{-1})S(\tg)R(\ta\tg^{-1}))X_{23}^*\\
\hsp{=}X_{23}X_{23}^*R(\td\tg^{-1})_{12}R(\td\tg^{-1})_{13}S(\tg)_{12}S(\tg)_{13}R(\ta\tg^{-1})_{12}R(\ta\tg^{-1})_{13}X_{23}X_{23}^*\\
\hsp{=}R(\td\tg^{-1})_{12}S(\tg)_{12}R(\ta\tg^{-1})_{12}R(\td\tg^{-1})_{13}S(\tg)_{13}R(\ta\tg^{-1})_{13}=U_{12}U_{13},
\end{eqnarray*}
which shows that $U$ is a corepresentation of $\widehat{\mb{G}}$. Let
$\pi_U\in\Rep(A;\mc{H})$ be the corresponding representation of
$A$. The next step is to prove that $\pi_U$ is the
representation $\pi$ of our theorem:
\[\pi_U(\ha)=\ta,\,\,\,\pi_U(\hb)=\tb,\,\,\,\pi_U(\hg)=\tg,\,\,\,\pi_U(\hd)=\td.\]
 Treating $\ha,\hg,\hd\in A^\eta$ as closed
operators acting on $L^2(G)$ (in particular $\hg$ is an invertible
operator of multiplication by the coordinate $\gamma$) one can
prove that the multiplicative unitary $\widehat{W}$ is given by
\begin{equation}\label{hw}\widehat{W}=R(\hd\hg^{-1})S(\hg)R(\ha\hg^{-1}).\end{equation}
It can also be proven that
 \begin{equation}\label{gam0}\begin{array}{rcl}
\widehat{W}^*(1\ot\exp(i\Im(z\hg))\widehat{W}\hsp{=}U^{\ha\ot\hg}_{z,0}U^{\hg\ot\hd}_{z,0},\\
U^*(1\ot\exp(i\Im(z\hg))U\hsp{=}U^{\ta\ot\hg}_{z,0}U^{\tg\ot\hd}_{z,0},\end{array}\end{equation}
which implies that
\begin{equation}\label{gam}\begin{array}{rcl}
\widehat{W}^*(1\ot\hg)\widehat{W}\hsp{=}\ha\ot\hg\dotplus\hg\ot\hd,\\
U^*(1\ot\hg)U\hsp{=}\ta\ot\hg\dotplus\tg\ot\hd.\end{array}\end{equation}
Applying $\pi_U\ot\id$ to both sides of the first of these
equations and using \eqref{corupi} we get
\begin{equation}\label{relpi}\pi_U(\ha)\ot\hg\dotplus\pi_U(\hg)\ot\hd
=\ta\ot\hg\dotplus\tg\ot\hd.\end{equation} Let
$\pi_0^\Psi\in\Mor(A;A_0)$ be the morphism introduced in Section
\ref{qbs}. It sends $\hg$ to $0$ and $\hd$ to the normal element
$\hd_0=\ha_0^{-1}\in A_0^\eta$. Applying $\id\ot\pi_0^\Psi$ to
both sides of \eqref{relpi} we get
\begin{equation}\label{tgth}
\pi_U(\hg)\ot\hd_0=\tg\ot\hd_0.
\end{equation}
This immediately implies that $\pi_U(\hg)=\tg$. From this equality
and \eqref{relpi} we see that $\pi_U(\ha)=\ta$.
 Now using \eqref{corupi} and \eqref{hw} we get
\[R(\pi_U(\hd)\tg^{-1})=R(\td\tg^{-1}).\]
Equation \eqref{ra}  together with the fact that the support
of the measure $dE^R$ is the whole complex plain implies that
$\pi_U(\hd)\tg^{-1}=\td\tg^{-1}$, hence $\pi_U(\hd)=\td$. Finally,
$\pi_U(\hb)=\tb$, which is a consequence of the related equalities
for $\ha,\hg$ and $\hd$.

The fact that for any representation $\pi\in\Rep(A;\mc{H})$ the
quadruple $(\pi(\ha),\pi(\hb),\pi(\hg),\pi(\hd))$ is a
representation of the Heisenberg-Lorentz commutation relations follows
directly from the definitions of affiliated elements
$\ha,\hb,\hg,\hd\in A^\eta$.
\end{proof}
The above theorem implies the following
\begin{cor}\label{sep} Let $A$ be the $\C^*$-algebra of the Heisenberg-Lorentz
quantum group. Then the generators $\ha,\hb,\hg,\hd\in A^\eta$ separate
representations of $A$. That is, if $\pi_1$ and $
\pi_2\in\Rep(A;\mc{H})$ coincide on $\ha,\hb,\hg,\hd$:
\[\pi_1(\ha)=\pi_2(\ha)\,,\, \pi_1(\hb)=\pi_2(\hb)\,,\,
\pi_1(\hg)=\pi_2(\hg)\,,\, \pi_1(\hd)=\pi_2(\hd)\] then
$\pi_1=\pi_2$.
\end{cor}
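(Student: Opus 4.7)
The plan is to derive the corollary as a direct consequence of Theorem \ref{repth}. Suppose $\pi_1,\pi_2\in\Rep(A;\mc{H})$ agree on the four generators, so that the quadruples $(\pi_1(\ha),\pi_1(\hb),\pi_1(\hg),\pi_1(\hd))$ and $(\pi_2(\ha),\pi_2(\hb),\pi_2(\hg),\pi_2(\hd))$ coincide as tuples of closed operators on $\mc{H}$. By the second (``moreover'') part of Theorem \ref{repth}, this common quadruple is a representation of the Heisenberg--Lorentz commutation relations in the sense of Definition \ref{repquar}. Denote it $(\ta,\tb,\tg,\td)$.

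I would then invoke the uniqueness clause in the first part of Theorem \ref{repth}: there exists a \emph{unique} $\pi\in\Rep(A;\mc{H})$ satisfying $\pi(\ha)=\ta$, $\pi(\hb)=\tb$, $\pi(\hg)=\tg$, $\pi(\hd)=\td$. Since both $\pi_1$ and $\pi_2$ fulfil these four identities by assumption, the uniqueness forces $\pi_1=\pi=\pi_2$, which is exactly the claim.

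No additional machinery is needed beyond Theorem \ref{repth}; there is no genuine obstacle here, as the corollary is essentially a repackaging of the bijective correspondence between $\Rep(A;\mc{H})$ and representations of the Heisenberg--Lorentz commutation relations established in that theorem. The only point worth being explicit about is that the hypothesis $\pi_1(\ha)=\pi_2(\ha)$, etc., is understood as equality of unbounded affiliated operators on $\mc{H}$ (including domains), which is precisely the notion of agreement used in Theorem \ref{repth}.
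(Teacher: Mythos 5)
Your argument is correct and is precisely the one the paper intends: the corollary is stated as an immediate consequence of Theorem \ref{repth}, and your proof simply makes explicit the use of the ``moreover'' clause together with the uniqueness assertion there. Nothing further is needed.
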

\end{subsection}
\begin{subsection}{$\ha,\hb,\hg,\hd$ as generators of  $A$}
By Corollary \ref{sep} we know  that
$\ha,\hb,\hg,\hd\in A^\eta$ separate representations of $A$. The
aim of this section is to prove that they generate $A$ in the
sense of Woronowicz. For this purpose we shall use the following
theorem which is a consequence of
 Theorem 4.2 of \cite{Worgen}.
\begin{thm}\label{congen}
Let $T_1,T_2\ldots,T_n$ be elements affiliated with a
$\C^*$-algebra $A$. Let $\Omega$ be the subset of $M(A)$
consisting of elements of the form $(1+T_i^*T_i)^{-1}$,
$(1+T_iT_i^*)^{-1}$, $\exp(-T_i^*T_i)$, $\exp(-T_iT_i^*)$. Assume
that
\begin{itemize}
\item[1.] $T_1,T_2\ldots,T_n$ separate representations;
\item[2.] there exist elements $r_1,r_2,\ldots, r_k\in \Omega$ such that  $r_1r_2\ldots r_k\in A$.
\end{itemize}
Then $T_1,T_2,\ldots,T_n$ generate $A$.
\end{thm}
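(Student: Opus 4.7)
The plan is to reduce the statement to Theorem 4.2 of \cite{Worgen}, which characterizes when a family of affiliated elements generates a $\C^*$-algebra in the Woronowicz sense. That theorem has two ingredients: (a) the candidate generators separate representations of $A$, which is exactly hypothesis~1, and (b) a ``richness'' condition to the effect that the $\C^*$-subalgebra $B \subset \M(A)$ obtained by $C_0$-functional calculus on the affiliated elements contains an element of $A$. The strategy is to produce this witness inside $A$ directly from hypothesis~2.

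First I would observe that every element of $\Omega$ lies in $B$. Each $r \in \Omega$ has the form $f(T_i^*T_i)$ or $f(T_iT_i^*)$ for one of the two bounded $C_0$-functions $x\mapsto (1+x)^{-1}$ or $x\mapsto\exp(-x)$ on $[0,\infty)$. Since $T_i^*T_i$ and $T_iT_i^*$ are positive selfadjoint affiliated elements obtainable from $T_i$ alone via the Woronowicz functional calculus (cf.~\cite{WorNap}), each such $r$ belongs to $B$. Consequently the product $r_1 r_2 \cdots r_k$ lies in $B$; by hypothesis~2 it also lies in $A$, so $B \cap A$ contains this non-trivial element.

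With separation of representations from hypothesis~1 and the witness $r_1 \cdots r_k \in B\cap A$ just produced, the hypotheses of Theorem 4.2 of \cite{Worgen} are in place, and that theorem yields the conclusion that $T_1,\dots,T_n$ generate $A$.

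The main obstacle is matching the specific form of the richness condition in \cite{Worgen} with the one supplied here: the version in \cite{Worgen} is typically phrased in the more restrictive language of $z$-transforms or resolvents $(1+T_i^*T_i)^{-1}$, while our $\Omega$ is enlarged to include the ``flipped'' resolvents and the Gaussian damping factors $\exp(-T_i^*T_i),\exp(-T_iT_i^*)$. The verification that the enlargement causes no harm comes down to the fact that all four functions $(1+x)^{-1}$, $\exp(-x)$ generate the same $C_0([0,\infty))$ under the continuous functional calculus, hence $B$ is unchanged whether built from the narrower or the broader class.
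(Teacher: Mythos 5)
Your proposal follows essentially the same route as the paper, which states Theorem \ref{congen} without proof, offering only the remark that it is a consequence of Theorem 4.2 of \cite{Worgen}. The extra detail you supply --- that every element of $\Omega$ arises by $\C_0$-functional calculus on $T_i^*T_i$ or $T_iT_i^*$ from one of the nonvanishing functions $(1+x)^{-1}$, $e^{-x}$, each of which generates $\C_0([0,\infty))$, so that enlarging the class of admissible factors beyond the resolvents of \cite{Worgen} changes nothing essential --- is precisely the point the paper leaves implicit.
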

\begin{thm}\label{gen} Affiliated elements $\ha,\hb,\hg,\hd$ generate the $\C^*$-algebra
$A$ of the Heisenberg-Lorentz quantum group. \end{thm}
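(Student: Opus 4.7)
The plan is to invoke Theorem~\ref{congen} for the tuple $(\ha,\hb,\hg,\hd)$. The separation hypothesis is immediate from Corollary~\ref{sep}, so the entire content of the proof reduces to exhibiting a finite product $r_1r_2\cdots r_k$, with each $r_i$ drawn from the set
\[
\Omega=\left\{(1+T^*T)^{-1},\ (1+TT^*)^{-1},\ \exp(-T^*T),\ \exp(-TT^*): T\in\{\ha,\hb,\hg,\hd\}\right\},
\]
which actually belongs to $A$, rather than only to $\M(A)$.

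The natural candidate is
\[
r=\exp(-\ha^*\ha)\exp(-\hb^*\hb)\exp(-\hg^*\hg)\exp(-\hd^*\hd).
\]
The heuristic is classical: on $G=\SL$ the function $g\mapsto \exp(-|\alpha(g)|^2-|\beta(g)|^2-|\gamma(g)|^2-|\delta(g)|^2)$ belongs to $\C_0(G)$, since a sequence in $G$ can escape every compact set only when at least one of the four matrix coefficients blows up. After Rieffel deformation this vanishing-at-infinity should survive as the statement $r\in A$.

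For the verification I would proceed in three steps. First, each factor lies in $\M(A)$ because each of $\ha,\hb,\hg,\hd$ is affiliated with $A$, so $r\in\M(A)$ a priori. Second, using the density of $\mc{Q}(\C_c^\infty(G))$ in $A$ together with the explicit action formulas of the generators (Theorem~\ref{repad} for $\ha,\hd$, Section~\ref{qbs} and the $\hb$-theorem for $\hb$, and the description of $\hg$ in Section~\ref{hg}), I would rewrite each exponential factor applied to $\mc{Q}(f)$, $f\in\C_c^\infty(G)$, as a Gaussian smearing against the representations $U^{\ha},U^{\hd},U^{\hb_0}$ and the normal element $\hg$, exactly in the spirit of formula~\eqref{asa} as exploited in Lemma~\ref{hbg}. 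Third, by iterating the four smearings and invoking the norm estimate of Theorem~\ref{A1}, the composite $r\cdot\mc{Q}(f)$ can be expressed as $\mc{Q}(h_f)$ for a suitable $h_f\in\C_0(G)$, and running $f$ through an approximate identity in $\mc{Q}(\C_c^\infty(G))$ shows that $r$ is the norm limit of elements of $\mc{Q}(\C_c^\infty(G))\subset A$, hence $r\in A$.

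Alternatively, one may exploit the exact sequence \eqref{exseq}: one checks separately that $\pi_0^\Psi(r)\in A_0$ using the quantization map $\mc{Q}_0$ on the Borel side, and that the discrepancy between $r$ and a lift of $\pi_0^\Psi(r)$ lies in $A_{\hg}$ via the density statement \eqref{dbg}, mirroring the gluing procedure used to construct $\hb$ itself. The main obstacle in either route is the non-commutativity of $\ha,\hb,\hd$: unlike in the classical case the four exponentials do not simply multiply as pointwise functions on $G$, and one must control the Heisenberg-type corrections to show that after all four smearings the resulting kernel decays uniformly on $G$. Once $r\in A$ is established, Theorem~\ref{congen} delivers the conclusion that $\ha,\hb,\hg,\hd$ generate $A$.
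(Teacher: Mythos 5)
Your overall strategy is the correct one and matches the paper's: Theorem \ref{congen} together with Corollary \ref{sep} reduces everything to exhibiting one finite product of elements of $\Omega$ that lies in $A$ rather than merely in $\M(A)$. However, the execution has genuine gaps.

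First, the final step of your main route is circular. If $\mc{Q}(f_n)$ runs through an approximate identity of $A$, then $r\,\mc{Q}(f_n)\rightarrow r$ holds only in the strict topology of $\M(A)$; norm convergence of $r\,\mc{Q}(f_n)$ to $r$ is \emph{equivalent} to $r\in A$, which is what you are trying to prove. Likewise, establishing $r\,\mc{Q}(f)=\mc{Q}(h_f)\in A$ for each $f$ gives nothing new, since $r\in\M(A)$ and $\mc{Q}(f)\in A$ already imply $r\,\mc{Q}(f)\in A$. What is actually needed is to exhibit the chosen product itself as $\mc{Q}(h)$ for a single quantizable function $h$, possibly up to correction terms that are visibly in $A$ --- which is what the paper does.

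Second, the factor $\exp(-\hb^*\hb)$ is problematic. The Gaussian-smearing formula \eqref{asa} requires a representation of the Heisenberg group whose complex generator is the element in question. Such representations exist for $\ha$ and $\hd$ (the $U^{\ha}$, $U^{\hd}$ of Theorem \ref{repad}), but not globally for $\hb$: the representation $U^{\hb_0}$ lives on the quotient $A_0$, and $\hb$ itself is defined only by gluing $\hb_0$ and $\hb_{\hg}$ via its graph. The paper sidesteps this by using the resolvent $(1+\hb^*\hb)^{-1}$ instead of the exponential and writing $f=(1+\Op(\beta)^*\Op(\beta))k_1+k_2$, so that $(1+\hb^*\hb)^{-1}\mc{Q}(f)=\mc{Q}(k_1)+(1+\hb^*\hb)^{-1}\mc{Q}(k_2)$ with both summands manifestly in $A$; this needs only the already-proved action formula $\hb\,\mc{Q}(f)=\mc{Q}(\Op(\beta)f)$.

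Third, a structural point: the paper's product is $(1+\hb^*\hb)^{-1}\exp(-\ha^*\ha)\exp(-\hd^*\hd)$, with no $\hg$-factor at all. The noncommutativity you worry about in fact helps here: the explicit Mehler-type computation \eqref{asa1} shows $\exp(-\ha^*\ha)\exp(-\hd^*\hd)=\mc{Q}(f)$ with $f$ containing the factor $\cosh^{-2}\left(\tfrac{s}{2}\gamma^*\gamma\right)$, so this element already decays in the $\gamma$-direction. Your extra factor $\exp(-\hg^*\hg)$ is harmless but is not where the difficulty lies. In short, the skeleton of your argument is right, but the two substantive steps --- producing an explicit quantizable kernel equal to the product, and handling $\hb$ without a global Heisenberg representation --- are precisely the parts that are missing.
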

\begin{proof}
From Theorem \ref{congen} and Corollary \ref{sep} we see that it
is enough to prove that
\begin{equation}\label{wprod}(1+\hb^*\hb)^{-1}\exp(-\ha^*\ha)\exp(-\hd^*\hd),\end{equation}
which is an element of $\M(A)$, belongs in fact to
$A\subset\M(A)$. In order to do that we shall first analyze the element $\exp(-\ha^*\ha)\exp(-\hd^*\hd)\in\M(A)$.
 For any $g\in G$ we set
\[f(g)=\frac{1}{
\cosh^2\left(\frac{s}{2}\gamma^*\gamma\right)}\exp\left(-\frac{2(|\alpha|^2+|\delta|^2)
\tanh\left(\frac{s}{2}\gamma^*\gamma\right)}{s\gamma^*\gamma}\right).\]
Let $h_t$ be the family of functions defined by \eqref{defh}. A
straightforward computation shows that
\begin{equation}\label{asa1}
\displaystyle f(g)=\int
d^2z_1\,d^2z_2\,h_{1}\left(z_1,-\frac{s}{2}|\gamma|^2\right)
h_{1}\left(z_2,\frac{s}{2}|\gamma|^2\right)\exp(i\Im(z_1\alpha))\exp(i\Im(z_2\delta)).
\end{equation}
One can check that the functions
$\exp(-|\gamma|^2)\exp(i\Im(z_1\alpha))$ and
$\exp(-|\gamma|^2)\exp(i\Im(z_2\delta))$ are quantizable in the
sense of Theorem \ref{A2} and
\begin{equation}\label{asa0}
\begin{array}{l}
\mc{Q}(\exp(-|\gamma|^2)\exp(i\Im(z_1\alpha)))=\exp(-|\gamma|^2)U_{z,0}^{\ha},\\
\mc{Q}(\exp(-|\gamma|^2)\exp(i\Im(z_2\delta)))=\exp(-|\gamma|^2)U_{z,0}^{\hd}.
\end{array}
\end{equation}
Using \eqref{asa1}, \eqref{asa0} and \eqref{asa}  we get
\begin{equation}\label{qf}\mc{Q}(f)=\exp(-\ha^*\ha)\exp(-\hd^*\hd).\end{equation}
Now for the purpose of analysis of the whole product \eqref{wprod}, we define two auxiliary functions
$k_1,k_2:G\rightarrow\mb{C}$:
\begin{equation}\label{qk}\begin{array}{rcl} k_1(g)\hsp{=}\displaystyle\frac{1}{1+\bar{\beta}\beta}f(g),\\
                  k_2(g)\hsp{=}f-(1+\Op(\beta)^*\Op(\beta))k_1.
\end{array}
\end{equation}
They satisfy the  assumptions of Theorem \ref{A1}, hence we can
quantize them obtaining $\mc{Q}(k_1)$ and $\mc{Q}(k_2)\in A$. Combining
\eqref{qf} and \eqref{qk} we see that
\begin{eqnarray*}(1+\hb^*\hb)^{-1}\exp(-\ha^*\ha)\exp(-\hd^*\hd)\hsp{=}(1+\hb^*\hb)^{-1}(\mc{Q}(f))\\
\hsp{=}(1+\hb^*\hb)^{-1}\mc{Q}((1+\Op(\beta)^*\Op(\beta))k_1+k_2)\\
\hsp{=}(1+\hb^*\hb)^{-1}\mc{Q}((1+\Op(\beta)^*\Op(\beta))k_1)+(1+\hb^*\hb)^{-1}\mc{Q}(k_2)\\
 \hsp{=}\mc{Q}(k_1)+(1+\hb^*\hb)^{-1}\mc{Q}(k_2).
\end{eqnarray*}
Both factors of the above sum belong to $A$, therefore
$(1+\hb^*\hb)^{-1}\exp(-\ha^*\ha)\exp(-\hd^*\hd)\in A$.
\end{proof}
\end{subsection}
\begin{subsection}{Comultiplication}
\begin{thm}\label{delg} Let $\mb{G}=(A,\Delta)$ be the Heisenberg-Lorentz quantum
group. Then  the action of $\Delta$ on the generators
$\ha,\hb,\hg,\hd\in A^\eta$ is given by
\begin{equation}\label{comg}\begin{array}{rcl}\Delta(\ha)\hsp{=}\ha\ot\ha\dotplus\hb\ot\hg\\
                                  \Delta(\hb)\hsp{=}\ha\ot\hb\dotplus\hb\ot\hd\\
                                  \Delta(\hg)\hsp{=}\hg\ot\ha\dotplus\hd\ot\hg\\
                                  \Delta(\hd)\hsp{=}\hg\ot\hb\dotplus\hd\ot\hd.
\end{array}
\end{equation}
\end{thm}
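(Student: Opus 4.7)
The idea is to reduce \eqref{comg} to Theorem \ref{comrep} via the bijection of Theorem \ref{repth}. Fix representations $\pi_1\in\Rep(A;\mc{H})$ and $\pi_2\in\Rep(A;\mc{H}')$ and form $\pi:=(\pi_1\ot\pi_2)\circ\Delta\in\Rep(A;\mc{H}\ot\mc{H}')$. By the second assertion of Theorem \ref{repth} the quadruple $(\pi(\ha),\pi(\hb),\pi(\hg),\pi(\hd))$ is a representation of the Heisenberg--Lorentz commutation relations on $\mc{H}\ot\mc{H}'$. Theorem \ref{comrep} applied to $(\pi_j(\ha),\pi_j(\hb),\pi_j(\hg),\pi_j(\hd))$ for $j=1,2$ produces a second such representation $(\ta'',\tb'',\tg'',\td'')$ whose entries are precisely the right-hand sides of \eqref{comg} with $\pi_1\ot\pi_2$ inserted. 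Once these two quadruples are identified, specialising to faithful $\pi_1,\pi_2$ (e.g.\ the standard representation on $L^2(G)$) transfers the identities back to affiliated elements of $A\ot A$, which is \eqref{comg}.

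To identify the quadruples I would treat $\hg$ first, reading the answer off the proof of Theorem \ref{repth}: equation \eqref{gam}, namely $\widehat{W}^*(1\ot\hg)\widehat{W}=\ha\ot\hg\dotplus\hg\ot\hd$, coincides with $\Sigma\,\Delta(\hg)\,\Sigma$ under the standard relation between $\widehat{W}$ and $\Delta$, yielding $\Delta(\hg)=\hg\ot\ha\dotplus\hd\ot\hg$ and therefore $\pi(\hg)=\tg''$. For $\Delta(\ha)$ and $\Delta(\hd)$ I would run the same argument with the Heisenberg-group unitaries $U^{\ha}_{z,t}$ and $U^{\hd}_{z,t}$ of Theorem \ref{repad} in place of $\exp(i\Im(z\hg))$, using the action formulas \eqref{repdef} and \eqref{acth1} on the core $\mc{Q}(\C_c^\infty(G))$ and differentiating in $z$. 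For $\Delta(\hb)$ I would split according to the exact sequence \eqref{exseq}: on the ideal $A_{\hg}$ the determinant relation $\hb_{\hg}=\hg^{-1}(\ha\hd-1)$ produces $\Delta(\hb)|_{A_{\hg}}$ out of the already-established $\Delta(\ha),\Delta(\hg),\Delta(\hd)$ via Theorem \ref{thmc2}, while on the quotient $A_0$ the formula for $\Delta(\hb_0)$ follows from the pair version of Theorem \ref{comrep} adapted to Definition \ref{reppair}; the two partial formulas then glue together exactly as in the construction of $\hb$ from $\hb_{\hg}$ and $\hb_0$.

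The main obstacle is that each line of \eqref{comg} is a dotted sum of unbounded, generally non-normal operators, so the claimed equality is really a statement about closures of naively-defined sums. As in the proof of Theorem \ref{comrep}, this is handled by producing smearing families of bounded operators (the analogues of $I_\ep$ and $J_\ep$ constructed there) whose ranges lie in a joint core of all the operators involved and which converge strongly to the identity; coincidence of the two sides on such a core then upgrades to equality of closed operators, which is exactly what \eqref{comg} asserts.
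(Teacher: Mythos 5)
Your overall frame---make sense of the right-hand sides via Theorem \ref{comrep}, then invoke the bijection of Theorem \ref{repth} to pin down $\Delta$ on the generators---is exactly the reduction described in Remark \ref{remcom}, so the skeleton is right. Where you diverge from the paper, and where the real work lies, is in the identification of the two quadruples. The paper does this in one stroke at the level of unitaries: the quadruple $(\ta,\tb,\tg,\td)$ built by Theorem \ref{comrep} determines, via the construction in the proof of Theorem \ref{repth}, a corepresentation $U=R(\td\tg^{-1})S(\tg)R(\ta\tg^{-1})$, while $(\Delta\ot\id)\widehat{W}=\widehat{W}_{23}\widehat{W}_{13}$ says that the corepresentation attached to $\pi=\Delta$ is $\widehat{W}_{23}\widehat{W}_{13}$; using the explicit factorization $\widehat{W}=R(\hd\hg^{-1})S(\hg)R(\ha\hg^{-1})$ of \eqref{hw}, the equality $U=\widehat{W}_{23}\widehat{W}_{13}$ is reduced, through \eqref{sz}, to an elementary identity between $2\times 2$ matrices in $G$, and the uniqueness in Theorem \ref{repth} then yields all four formulas of \eqref{comg} simultaneously. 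None of this appears in your proposal, and it is the actual content of the proof.

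Your substitute---verifying each line of \eqref{comg} separately---has concrete gaps. For $\hg$ the conjugation formula $\Delta(a)=\Sigma\,\widehat{W}^*(1\ot a)\widehat{W}\,\Sigma$ you invoke is not stated in the paper; it does follow from $(\Delta\ot\id)\widehat{W}=\widehat{W}_{23}\widehat{W}_{13}$ together with the pentagon equation, but you would have to derive it and then extend it from $A$ to affiliated elements before \eqref{gam} gives you $\Delta(\hg)$. For $\ha$ and $\hd$ the analogous computation of $\widehat{W}^*(1\ot U^{\ha}_{z,t})\widehat{W}$ is nowhere near as clean as \eqref{gam0}, since $\ha$ is not normal and the answer must encode $\hb\ot\hg$; you would essentially be redoing the analysis of Theorem \ref{comrep} inside the conjugation. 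The step for $\Delta(\hb)$ is the most problematic: the decomposition of $L^2(G)\ot L^2(G)$ relevant to $\Delta(\hb)$ is governed by $\ker\Delta(\hg)$, where $\Delta(\hg)=\hg\ot\ha\dotplus\hd\ot\hg$, and this is \emph{not} the tensor product of the decompositions attached to $\hg\ot 1$ and $1\ot\hg$; correspondingly $\Delta$ does not map the exact sequence \eqref{exseq} onto anything you control, so the proposed ``glue as in the construction of $\hb$'' does not go through as stated. These are exactly the difficulties that the paper's single unitary identity is designed to bypass.
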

\begin{rem}\label{remcom} The elements appearing on the right hand side of \eqref{comg} (further denoted by $\ta,\tb,\tg,\td$ respectively) are treated as closed operators
acting on $L^2(G)\ot L^2(G)$. As usual, the sign $\dotplus$ denotes the
closure of the sum of two operators. The idea of the proof of the above theorem goes as follows. Using Theorem \ref{comrep} we see that the quadruple $(\ta,\tb,\tg,\td)$ is a representation of the Heisenberg-Lorentz commutation relations. By Theorem \ref{repth} this quadruple corresponds to a unique corepresentation $U$ of $\widehat{\mathbb{G}}$ on the Hilbert space $L^2(G)\otimes L^2(G)$ which in turn corresponds to a unique representation $\pi\in\Rep(A;L^2(G)\ot L^2(G))$
such that $\pi(\ha)=\ta,\,\pi(\hb)=\tb,\,\pi(\hg)=\tg,\,\pi(\hd)=\td$.
On the other hand $(\Delta\ot\id)\widehat{W}=\widehat{W}_{23}\widehat{W}_{13}$. Using the correspondence   $U=(\pi\ot\id)\widehat{W}$ and Theorem \ref{repth} once again we see that to prove \eqref{comg} it is enough to show that $U=\widehat{W}_{23}\widehat{W}_{13}$, which will be done in the following proof. Note that in order to prove Theorem \ref{delg}, it is first necessary to  show that the  quadruple $(\ta,\tb,\tg,\td)$ is a representation of the Heisenberg-Lorentz commutation relations. The proof of this fact seems to be as difficult as the proof of a more general Theorem \ref{comrep}.
\end{rem}
\begin{proof}[Proof of
Theorem \ref{delg}] In this proof we shall use the notation of the proof
of Theorem \ref{repth}.   Let $\ta,\tb,\tg,\td$  denote the right
hand sides of \eqref{comg}. As was explained in the above remark, to prove our theorem it is enough to show that
\begin{equation}\label{w0}\widehat{W}_{23}\widehat{W}_{13}=R(\td\tg^{-1})S(\tg)R(\ta\tg^{-1}).\end{equation}
 From equation \eqref{hw} we can see that
\begin{equation}\label{w1}\begin{array}{rcl}\widehat{W}_{13}\hsp{=}R(\hd\hg^{-1}\otimes
1)S(\hg\ot 1)R(\ha\hg^{-1}\ot 1),\\
\widehat{W}_{23}\hsp{=}R(1\ot\hd\hg^{-1})S(1\ot\hg)R(1\ot\ha\hg^{-1}).
\end{array}
\end{equation}
Therefore, the left hand side of \eqref{w0} has the following
form:
\begin{equation}\label{w2}R(1\ot\hd\hg^{-1})S(1\ot\hg)R(1\ot\ha\hg^{-1})R(\hd\hg^{-1}\otimes
1)S(\hg\ot 1)R(\ha\hg^{-1}\ot 1).\end{equation} Using the fact
that $R(1\ot\ha\hg^{-1})$ commutes with $R(\hd\hg^{-1}\otimes 1)$
we see that \eqref{w2} is equal to
\begin{equation}\label{w3}R(1\ot\hd\hg^{-1})S(1\ot\hg)R(\hd\hg^{-1}\ot 1)
R(1\ot\ha\hg^{-1})S(\hg\ot 1)R(\ha\hg^{-1}\ot 1).\end{equation}
Formula \eqref{formal} and the corresponding formula  related to $\td$
imply that
\begin{equation}\label{w4}\begin{array}{rcl}
\ha\hg^{-1}\ot 1\hsp{=}\ta\tg^{-1}\dotplus\tg^{-1}(\hg^{-1}\ot\hg),\\
1\ot\hd\hg^{-1}\hsp{=}\td\tg^{-1}\dotplus\tg^{-1}(\hg\ot\hg^{-1}).
\end{array}
\end{equation}
Using these equalities and the fact that \eqref{w2} is equal to
$\widehat{W}_{23}\widehat{W}_{13}$ we get:
\begin{equation}\label{w5}
\begin{array}{rcl}
\widehat{W}_{23}\widehat{W}_{13}\hsp{=}R(\td\tg^{-1})\exp(-i\Im(\tg^{-1}(\hg\ot\hg^{-1})\ot
T_r))S(1\ot\hg)R(\hd\hg^{-1}\ot 1)\\\hsp{\times}
R(1\ot\ha\hg^{-1})S(\hg\ot 1)\exp(-i\Im(\tg^{-1}(\hg^{-1}\ot\hg)\ot
T_r))R(\ta\tg^{-1}).
\end{array}
\end{equation}
Noting that \[R(\hd\hg^{-1}\ot 1)
R(1\ot\ha\hg^{-1})=\exp(-i\Im(\tg(\hg^{-1}\ot\hg^{-1})\ot T_r))\]
and using equation \eqref{w5} we see that in order to prove
equality \eqref{w0} it is enough to check that
\begin{equation}\label{w6}\begin{array}{rcl}S(\tg)\hsp{=}\exp(-i\Im(\tg^{-1}(\hg\ot\hg^{-1})\ot
T_r))S(1\ot\hg)\\\hsp{\times}\exp(-i\Im(\tg(\hg^{-1}\ot\hg^{-1})\ot
T_r))S(\hg\ot 1)\exp(-i\Im(\tg^{-1}(\hg^{-1}\ot\hg)\ot
T_r)).\end{array}\end{equation} The operators $\tg,1\ot\hg$ and $\hg\ot 1$
which appear in the above expression are normal and they strongly
commute. Therefore, to prove \eqref{w6} we have to check that
\begin{equation}\label{w7}\begin{array}{rcl}S(u)\hsp{=}\exp(-i\Im(u^{-1}vw^{-1}
T_r))S(w)\exp(-i\Im(uv^{-1}w^{-1}
T_r))S(v)\exp(-i\Im(u^{-1}v^{-1}wT_r))\end{array}\end{equation}
for any $u,v,w\in\mb{C}\setminus\{0\}$. Noting that
\begin{equation}\label{sz} S(w)=Z_w,\,\,\, \exp(i\Im(zT_r))=R_z,\end{equation} where $Z_w$ and
$R_z$ are operators defined in the proof of Theorem \ref{repth}, we see
that equation \eqref{w7} is equivalent to the following matrix
identity:
\[\begin{array}{rcl}
\left(
  \begin{array}{cc}
    0 & u^{-1} \\
    -u & 0 \\
  \end{array}
\right)\hsp{=}\left(
                \begin{array}{cc}
                  1 & -vu^{-1}w^{-1} \\
                  0 & 1 \\
                \end{array}
              \right)\left(
                       \begin{array}{cc}
                         0 & w^{-1} \\
                         -w & 0 \\
                       \end{array}
                     \right)\left(
                              \begin{array}{cc}
                                1 & -uv^{-1}w^{-1} \\
                                0 & 1 \\
                              \end{array}
                            \right)\vspace{0,5cm}\\\hsp{\times}\left(
                                                   \begin{array}{cc}
                                                     0 & v^{-1} \\
                                                     -v & 0 \\
                                                   \end{array}
                                                 \right)\left(
                                                          \begin{array}{cc}
                                                            1 & -wu^{-1}v^{-1} \\
                                                            0 & 1 \\
                                                          \end{array}
                                                        \right).
\end{array}\]
Its verification is a straightforward computation which is left to
the reader.
\end{proof}
\end{subsection}
\end{section}
\begin{appendices}
\begin{section}{Quantization map}\label{Aqm}
 Let $\mathcal{Q}$ be the quantization map introduced in
Definition 4.13 of paper \cite{Kasp}. Recall that $\mc{Q}$ was
defined on elements of the Fourier algebra $\mc{F}$: \[
\mc{F}=\{(\omega\otimes \id)V:\omega\in\B(L^2(G))_*\}\] and
$\mathcal{Q}((\omega\otimes \id)V)=(\omega\otimes \id)W$, where
$V$ is the Kac-Takesaki operator of a locally compact group $G$
and $W$ is the multiplicative unitary related to a Rieffel
deformation of $G$. Given a function $f\in\C_0(G)$ it is usually
difficult to check if $f\in\mc{F}$, which makes $\mc{Q}$ not very
useful in the practical applications. In the case of the Heisenberg-Lorentz quantum group we
shall give a new description of the quantization map which does
not have the aforementioned drawback. $\mc{Q}$ will be defined
on a different class of functions but when the function happens to
be an element of $\mc{F}$ then the new definition will coincide
with the old one.
 Consider two representations of
$\Gamma\subset G$ on $L^2(G)$:
\begin{equation}\label{lrrep}
\begin{array}{c}
\Gamma\ni\gamma\mapsto R_\gamma\in\B(L^2(G)),\\
\Gamma\ni\gamma\mapsto L_\gamma\in\B(L^2(G)).
\end{array}
\end{equation}
Let  $T_l$ and $T_r$ be infinitesimal generators of these
representations:
\begin{equation}\label{tltr}\begin{array}{rcl}R_\gamma\hsp{=}\exp(i\Im(\gamma T_r)),\\
L_\gamma\hsp{=}\exp(i\Im(\gamma T_l))\end{array}\end{equation} for
any $\gamma\in \Gamma\simeq\mathbb{C}$. The related complex vector fields on $G$
will be denoted by $\partial_l$ and $\partial_r$ (see equation
\eqref{diffop1}). Now consider two differential operators
$K_l=(1+T_l^*T_l)^{2}$ and $K_r=(1+T_r^*T_r)^{2}$ acting on
$L^2(G)$. Note that $K_l$ and $K_r$ are positive, invertible and
their inverses $K_l^{-1}, K_r^{-1}$ are bounded.

 Let $x,y,v,w\in L^2(G)$ be vectors such that
$x,y,w\in\D(K_r)$ and $v\in\D(K_l)$. Our next objective is to
compute the matrix element $\me{x\ot v}{W}{y\ot w}$. Note that
\begin{eqnarray}\label{mew}\me{x\ot v}{W}{y\ot w}\hsp{=}
\me{K_r x\ot K_l v}{(K_r^{-1}\ot K_l^{-1})YVX (K_r^{-1}\ot
K_r^{-1})}{K_r y\ot K_r w}.\end{eqnarray} Let
$\pi_R,\pi_L\in\Rep(\C_0(\mb{C});L^2(G))$
 be representations of
$\C_0(\mb{C})$ which  send $\id\in\C_0(\mb{C})^\eta$ to $T_r$ and
$T_l$ respectively. We have the following two equalities:
\begin{eqnarray}\label{xfor} X(K_r^{-1}\ot K_r^{-1})\hsp{=}(\pi_R\ot
\pi_R)((K^{-1}\ot K^{-1})\Psi)\\\label{yfor}(K_r^{-1}\ot
K_l^{-1})Y\hsp{=}(\pi_R\ot \pi_L)((K^{-1}\ot
K^{-1})\Psi),\end{eqnarray} where $K:\mb{C}\rightarrow \mb{R}$ is
the function given by the formula: \[K(z)=(1+|z|^2)^2\] and
$\Psi\in\M(\C_0(\mb{C})\otimes\C_0(\mb{C}))$ is defined by
$\eqref{dualbi}$. Let $l:\mb{C}^2\rightarrow\mb{C}$ be the
function given by:
\[l(w_1,w_2)=\int
d^2z_1d^2z_2\,\frac{\exp(-i\Im(z_1w_1+z_2w_2-\frac{s}{4}z_1\bar{z}_2))}{(1+|z_1|^2)^{2}(1+|z_2|^2)^{2}}.\]
Note that  $l\in L^1(\mb{C}^2)$ and the right hand side of
$\eqref{xfor}$ can be expressed by $l$:
\begin{equation}\label{xrfor}(\pi_R\ot
\pi_R)((K^{-1}\ot K^{-1})\Psi)=\int
d^2w_1d^2w_2\,l(w_1,w_2)(R_{w_1}\ot R_{w_2}).\end{equation} We
have a similar formula for the right hand side of \eqref{yfor}:
\begin{equation}\label{yrfor}(\pi_R\ot \pi_L)((K^{-1}\ot
K^{-1})\Psi)=\int d^2w_1d^2w_2\,l(w_1,w_2)(R_{w_1}\ot
L_{w_2}).\end{equation} Let $f=(\omega_{x,y}\ot\id)V\in\C_0(G)$ be
the slice of the Kac-Takesaki operator and
$\tilde{h}\in\C_0(\mb{C})$ the function given by:
\begin{equation}\label{th}\tilde{h}(w)=\displaystyle\int d^2z \frac{\exp(
i\Im(wz))}{(1+s^{-2}|z|^2)^2}.\end{equation}  Let $\pi^{\rm
can}\in\Rep(\C_0(G)\rtimes\mb{C}^2;L^2(G))$ be  the
representation introduced in Remark 4.5 of \cite{Kasp} and
$\lambda^L,\lambda^R\in\Mor(\C_0(\mb{C});\C_0(G)\rtimes\mb{C}^2)$
 the morphisms introduced in the paragraph following
Proposition 4.2 of \cite{Kasp}. A simple but tedious computation
which starts with inserting \eqref{xrfor} and \eqref{yrfor} into
\eqref{mew} leads to the following equality:
\begin{eqnarray*}\label{mew1}\me{x\ot v}{W}{y\ot
w}&\\&\hspace*{-3cm}=\displaystyle\int d^2w_1d^2w_2\me{v}{\pi^{\rm
can}\Bigr(\hat{\rho}_{w_1,w_2}^{\Psi}\bigl
(\lambda^L(\tilde{h})\bigl((1+\partial_r^*\partial_r)^2(1+\partial_l^*\partial_l)^2f\bigr)
\lambda^R(\tilde{h})\bigr)\Bigl)}{w}\end{eqnarray*}  Denoting
\[\lambda^L(\tilde{h})\bigl((1+\partial_r^*\partial_r)^2(1+\partial_l^*\partial_l)^2f\bigr)
\lambda^R(\tilde{h})\in\M(\C_0(G)\rtimes\mb{C}^2)\]  by $b_f$ we
get \begin{equation}\label{mew2}\me{x\ot v}{W}{y\ot
w}=\displaystyle\int d^2w_1d^2w_2\me{v}{\pi^{\rm
can}(\hat{\rho}_{w_1,w_2}^{\Psi}(b_f))}{w}.\end{equation} If $b_f$
happens to be in the domain of $\mathfrak{E}^\Psi$ - the averaging
map
 with respect to the twisted dual action
$\hat\rho^\Psi$ - then, in the formula \eqref{mew2} we can enter
the integral under the scalar product obtaining
\begin{equation}\label{mew3}\me{x\ot v}{W}{y\ot w}=\me{v}{\pi^{\rm can}\left(\int
d^2w_1d^2w_2\,\hat{\rho}_{w_1,w_2}^{\Psi}(b_f)\right)}{w}.\end{equation}
Equation \eqref{mew3} may then be rewritten as follows:
\[\mc{Q}(f)=\pi^{\rm can}(\mathfrak{E}^\Psi(b_f)).\] Let us show
that this last equation holds whenever $f$ is regular enough. In
the next theorem we shall keep the same notation  $T_l$ and $T_r$
for normal operators acting on $L^2(G)$ (see \eqref{tltr}) and
elements affiliated with $\C_0(G)\rtimes\mb{C}^2$ (see
\eqref{infgenl}).
\begin{thm} \label{A1}Let $\partial_l,\partial_r$ be the complex vector
fields on $G$ given by \eqref{diffop1} and $f\in\C_0(G)$ a
continuous function such that\,\,\,
$\partial_l^{k*}\partial_l^{k'}\partial_r^{*m}\partial_r^{m'}f\in\C_0(G)$\,\,\,
whenever $k,k',l,l'\leq 5$. Let $b_f\in\M(\C_0(G)\rtimes\mb{C}^2)$
be the element given by
\[b_f=\lambda^L(\tilde{h})\bigl((1+\partial_r^*\partial_r)^2(1+\partial_l^*\partial_l)^2f\bigr)
\lambda^R(\tilde{h}).\] Then $b_f$  is in the domain of the
averaging map $\mathfrak{E}^\Psi$ and there exists a positive
constant $c$ such that
\[\|\mathfrak{E}^\Psi(b_f)\|\leq c\max_{k,k',l,l'\leq 5}
\sup_{g\in G}\left|\partial_l^{k*}\partial_l^{k'}\partial_r^{*m}\partial_r^{m'}f\right|.\]
 If  $f\in\C_0(G)$ is quantizable in the sense of Definition 4.13 of
 \cite{Kasp}, then
\[\mc{Q}(f)=\pi^{\rm can}(\mathfrak{E}^\Psi(f)).\]
\end{thm}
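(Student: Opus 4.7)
My plan is to extract the theorem directly from the computation preceding it, upgrading the formal manipulations to a rigorous integrability statement. The preparatory paragraphs already establish the pointwise matrix-element identity
\[
\me{x\ot v}{W}{y\ot w}=\int d^2w_1\,d^2w_2\,\me{v}{\pi^{\rm can}(\hat\rho^\Psi_{w_1,w_2}(b_f))}{w}
\]
for vectors in the appropriate domains. So the substance of Theorem \ref{A1} is to show that the integrand is absolutely integrable in operator norm with the claimed bound, which will simultaneously place $b_f$ in the domain of $\mathfrak E^\Psi$ and identify $\mathfrak E^\Psi(b_f)$ with $\int d^2w_1\,d^2w_2\,\hat\rho^\Psi_{w_1,w_2}(b_f)$ computed in the strict topology of $\M(\C_0(G)\rtimes\mb C^2)$.

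First I would unpack $\hat\rho^\Psi_{w_1,w_2}(b_f)$ using the definition \eqref{twistac}. Since $\hat\rho$ acts by characters on the implementing unitaries $\lambda_{z_1,z_2}$ and trivially on $\C_0(G)\subset\M(B)$, conjugating $b_f$ by $\lambda_{-\Hat\Psi(w_1),\Hat\Psi(w_2)}$ produces left and right translates of the generators $T_l,T_r$, while $\hat\rho_{w_1,w_2}$ contributes the phases. Consequently $\hat\rho^\Psi_{w_1,w_2}(b_f)$ takes the form
\[
\lambda^L(\tilde h_{w_2})\bigl((1+\partial_r^*\partial_r)^2(1+\partial_l^*\partial_l)^2 f\bigr)\lambda^R(\tilde h_{w_1})
\]
up to a twisted shift, where $\tilde h_{w_i}$ are translates of $\tilde h$. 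Because $\tilde h\in L^1(\mathbb C)$ comes from the $K^{-1}$-regularized integration in \eqref{th}, one has $\|\lambda^L(\tilde h_{w_2})\|,\|\lambda^R(\tilde h_{w_1})\|\in L^1(d^2w_1\,d^2w_2)$ after suitable cancellation using the convolution structure—this is what the two factors of $(1+\partial_l^*\partial_l)^2(1+\partial_r^*\partial_r)^2$ applied to $f$ provide: they neutralize the four missing factors in $K_l^{-1}$ and $K_r^{-1}$ from the derivation \eqref{mew}.

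The norm bound then reduces to standard Young-type estimates: the integral splits into a convolution of $L^1$ pieces coming from $\tilde h$ against the $L^\infty$ function $(1+\partial_r^*\partial_r)^2(1+\partial_l^*\partial_l)^2 f$. Expanding $(1+\partial^*\partial)^2$ as a polynomial in $\partial,\partial^*$ of total degree at most $4$ in each of the left/right variables produces a sum of terms each dominated by $\sup_g|\partial_l^{k*}\partial_l^{k'}\partial_r^{*m}\partial_r^{m'}f|$ with exponents bounded by $5$ (the extra derivative is absorbed into $T_l,T_r$ coming from $\tilde h$-differentiation in the translated action). Summing these terms and exchanging the order of integration gives the desired estimate with constant $c$ depending only on $\tilde h$.

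For the final assertion, once $b_f$ is proved to lie in $\mathrm{Dom}(\mathfrak E^\Psi)$, passing the convergent integral in \eqref{mew2} under the scalar product is a routine strong-continuity argument, yielding \eqref{mew3}. Comparing with the original definition $\mc{Q}(f)=(\omega\ot\id)W$ when $f=(\omega\ot\id)V$, the identity $\mc Q(f)=\pi^{\rm can}(\mathfrak E^\Psi(b_f))$ follows for any $f$ that is simultaneously quantizable in the old sense and smooth enough for the present theorem. The main obstacle is justifying the regularization step: one must verify that the operator-valued integrand is measurable and that the $L^1$ bound is achieved in norm rather than merely weakly, which requires a careful treatment of the twisted translates of $\lambda^L(\tilde h)$ and $\lambda^R(\tilde h)$; this is where the rather large exponent $5$ appears, giving enough derivatives to control the worst-behaved cross terms produced by the twisting $\hat\rho^\Psi$.
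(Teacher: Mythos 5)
The central step of your argument fails. You propose to place $b_f$ in the domain of $\mathfrak{E}^\Psi$ by showing that the map $(w_1,w_2)\mapsto\hat\rho^\Psi_{w_1,w_2}(b_f)$ is absolutely integrable in operator norm. But $\hat\rho^\Psi$ is an action by $*$-automorphisms, hence isometric: $\|\hat\rho^\Psi_{w_1,w_2}(b_f)\|=\|b_f\|$ for all $(w_1,w_2)\in\mb{C}^2$, so the norm of the integrand is constant and $\int d^2w_1\,d^2w_2\,\|\hat\rho^\Psi_{w_1,w_2}(b_f)\|$ diverges for every nonzero $b_f$. The same error underlies your claim that $\|\lambda^L(\tilde{h}_{w_2})\|$ and $\|\lambda^R(\tilde{h}_{w_1})\|$ lie in $L^1(d^2w_1\,d^2w_2)$: the twisted dual action sends $\lambda^L(\tilde{h})$ to $\lambda^L$ applied to a translate (modulation) of $\tilde{h}$, which has exactly the same norm, so these are nonzero constant functions of $(w_1,w_2)$. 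No Young-type estimate can be run here, because the averaging integral defining $\mathfrak{E}^\Psi$ is never norm convergent; it converges only in the strict topology, and membership in $\D(\mathfrak{E}^\Psi)$ is a structural condition, not an integrability condition on the orbit.

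The paper's route is different and is the one that works. In Pedersen's theory of $\Gamma$-products (encoded in inequality (10) of \cite{Kasp}), any element of the form $\lambda(h_1)b\lambda(h_2)$ with $h_1,h_2\in\C_0(\mb{C})\cap L^2(\mb{C})$ and $b$ in the crossed product lies automatically in the domain of the averaging map, with $\|\mathfrak{E}(\lambda(h_1)b\lambda(h_2))\|\leq\|h_1\|_2\,\|b\|\,\|h_2\|_2$. Lemma \ref{lema2} then does the real work: using the commutator identity $\partial^*g=[g,T^*]$ together with $(1+T^*T)^{-1}=\lambda((1+|z|^2)^{-1})$, it rewrites $\lambda(\tilde{h})g$ as a finite sum of such sandwiches whose middle factors are controlled by $\sup_x|\partial^{*l}\partial^{k}g(x)|$ with $l,k\leq 1$; iterating this in the left and right variables on top of the four derivatives already present in $(1+\partial_l^*\partial_l)^2(1+\partial_r^*\partial_r)^2$ is what produces the exponent $5$ and the stated bound. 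Your treatment of the final assertion (passing the integral under the scalar product and comparing with the slice definition of $\mc{Q}$) agrees with the paper's preamble, but it is contingent on the integrability claim, which your argument does not establish.
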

To prove the above theorem we shall need the following lemma:
\begin{lem}\label{lema2}
Let $X$ be a locally compact Hausdorff space,
$\rho:\mb{C}\rightarrow\Aut(\C_0(X))$  a continuous action and
$(\C_0(X)\rtimes\mb{C},\lambda,\hat\rho)$  the canonical
$\mb{C}$-product associated with $\rho$. Let $\partial$,
$\partial^*$ be the differential operators acting on the smooth
domain $\D^\infty(\rho)\subset\C_0(X) $ of the action $\rho$:
\begin{equation}
\begin{array}{rcl}\partial f\hsp{=}\frac{\partial}{\partial z}\rho_zf|_{z=0}\vspace{.1cm}\\
\partial^* f\hsp{=}\frac{\partial}{\partial \bar
z}\rho_zf|_{z=0}\end{array}.
\end{equation} Further, let $\tilde{h}\in\C_0(\mb{C})$ be the function
defined by \eqref{th} and let $g\in\C_0(X)$ be such that\,\,\,
$\partial^{*l}\partial^{k}g\in\C_0(\mb{C})$ for $k,l\in\{0,1\}$.
Then $\lambda(\tilde{h})g$ is in the domain of the averaging map
$\mathfrak{E}$ and there exists a positive constant $c\in\mb{R}$
such that:
\begin{equation}\label{et1}\|\mathfrak{E}(\lambda(\tilde{h})g)\|\leq c\max_{l,k\leq 1}\sup_{x\in
X}|\partial^{*l}\partial^{k}g(x)|.\end{equation}
\end{lem}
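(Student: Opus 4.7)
The plan is to exploit the Fourier duality built into $\tilde h$ in order to express $\mathfrak{E}(\lambda(\tilde h)g)$ as essentially a point evaluation of $g$, and to extract the norm bound from the resulting formula.

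First, I would unwind $\lambda(\tilde h)$. The function $\tilde h$ is, up to a Fourier normalisation, the Fourier transform of $k(z) := (1+s^{-2}|z|^2)^{-2}$, and since the infinitesimal generator of $\lambda$ is the normal element $T\,\eta\, (\C_0(X)\rtimes\mb{C})$ with $\lambda_z = \exp(i\Im(zT))$, Fourier inversion gives
$$
\lambda(\tilde h) \;=\; c_0 \int_{\mb{C}} k(z)\,\lambda_z\,d^2z
$$
for a constant $c_0$. Pushing $g$ past $\lambda_z$ via the covariance $\lambda_z g = \rho_z(g)\lambda_z$ yields
$$
\lambda(\tilde h)\,g \;=\; c_0\int_{\mb{C}} k(z)\,\rho_z(g)\,\lambda_z\,d^2z,
$$
and since $\hat\rho_w(\lambda_z) = e^{i\Im(wz)}\lambda_z$ while $\hat\rho_w$ fixes $\C_0(X)$,
$$
\hat\rho_w\!\bigl(\lambda(\tilde h)g\bigr) \;=\; c_0\int_{\mb{C}} k(z)\,\rho_z(g)\, e^{i\Im(wz)}\,\lambda_z\,d^2z.
$$

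Next, to land $\lambda(\tilde h)g$ in the domain of the averaging map $\mathfrak E$, I would integrate by parts in $z$ against the identity $(1-s^{-2}\Delta_z)\,e^{i\Im(wz)} = (1+s^{-2}|w|^2)\,e^{i\Im(wz)}$, where $\Delta_z = 4\partial_z\partial_{\bar z}$. The hypothesis $\partial^{*l}\partial^k g \in \C_0(X)$ for $k,l\le 1$ is exactly what guarantees that $\rho_z(g)$ is twice differentiable in $z$ with uniformly controlled derivatives of the form $\rho_z(\partial^{*l}\partial^k g)$, $k,l\le 1$. After one such IBP (with derivatives distributed between the smooth weight $k$ and $\rho_z(g)$) one obtains
$$
\hat\rho_w\!\bigl(\lambda(\tilde h)g\bigr) \;=\; \frac{c_0}{1+s^{-2}|w|^2}\int_{\mb{C}} F_w(z)\,\lambda_z\,d^2z,
$$
with $F_w$ an $L^1(\mb{C},\C_0(X))$-function whose total $L^1$-norm is majorized by $C\cdot\max_{k,l\le 1}\sup_{x\in X}|\partial^{*l}\partial^k g(x)|$. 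The remaining $z$-decay built into $k$ provides, after at most one further IBP absorbed entirely on $k$, enough $w$-decay to make the $w$-integral absolutely convergent in norm; this verifies $\lambda(\tilde h)g\in\mathcal{D}(\mathfrak E)$.

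Finally, Fubini together with Fourier inversion $\int e^{i\Im(wz)}\,d^2w = (2\pi)^2\delta^{(2)}(z)$ collapses the double integral, producing
$$
\mathfrak E\!\bigl(\lambda(\tilde h)g\bigr) \;=\; c'\, g
$$
for a universal constant $c'$, from which the estimate \eqref{et1} is immediate.

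The principal difficulty is the mismatch between a single IBP, which only yields the non-$L^1(\mb{C})$ weight $(1+s^{-2}|w|^2)^{-1}$ in $w$, and the restriction to at most one derivative of $g$ in each of $\partial,\partial^*$. The resolution is careful bookkeeping of which derivatives land on $k(z)$ (providing \emph{extra} $z$-decay, hence additional $w$-decay after a second IBP) versus on $\rho_z(g)$ (which we can afford only once in each direction); this is precisely why the hypothesis $k,l\le 1$ suffices. Alternatively one may invoke directly the $\Gamma$-product definition of $\mathfrak E$ from \cite{Kasp} on smoothly $\lambda^L(\tilde h)$-smeared elements, where $\tilde h$ is designed exactly as the smoothing kernel matched to this decay.
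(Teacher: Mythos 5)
Your overall strategy --- Fourier-expanding $\lambda(\tilde h)=c_0\int k(z)\lambda_z\,d^2z$, computing $\hat\rho_w(\lambda(\tilde h)g)$ and trying to integrate over $w$ --- breaks down precisely at the step where you certify membership in $\D(\mathfrak{E})$. The dual action $\hat\rho_w$ is an automorphism of $\C_0(X)\rtimes\mb{C}$ (extended isometrically to the multiplier algebra), so $\|\hat\rho_w(\lambda(\tilde h)g)\|=\|\lambda(\tilde h)g\|$ for every $w$; the integral $\int_{\mb{C}}\hat\rho_w(b)\,d^2w$ is \emph{never} absolutely norm-convergent for $b\neq 0$, and no integration by parts can produce decay of $\|\hat\rho_w(b)\|$ in $w$. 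Relatedly, your intermediate formula after one integration by parts is not correct as stated: differentiating $\lambda_z=\exp(i\Im(zT))$ in $z$ brings down the unbounded generator $T$ (and $T^*$), so the integrand is not of the form $F_w(z)\lambda_z$ with $F_w\in L^1(\mb{C};\C_0(X))$. These are not bookkeeping issues; they are the substance of the lemma. Domain membership has to be certified by the Pedersen-type criterion: exhibit $\lambda(\tilde h)g$ as a finite sum of elements $\lambda(h_1)b\lambda(h_2)$ with $h_1,h_2\in\C_0(\mb{C})\cap L^2(\mb{C})$ and $b$ in the crossed product, for which $\|\mathfrak{E}(\lambda(h_1)b\lambda(h_2))\|\le\|h_1\|_2\,\|b\|\,\|h_2\|_2$. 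Your element is smeared only on the \emph{left}, so the entire problem is to manufacture a right-hand smearing factor.

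That is what the paper's proof does, and it is exactly where the hypothesis $\partial^{*l}\partial^k g\in\C_0(X)$ for $k,l\le 1$ enters. One writes $\lambda(\tilde h)g=\lambda(\tilde h)g(1+T^*T)(1+T^*T)^{-1}$ and uses the commutator identity $\partial^*g=[g,T^*]$ (and its adjoint version) to push the factors of $T$, $T^*$ past $g$; this yields a sum of three terms, each of the form $\lambda(h_1)\,b\,\lambda\bigl((1+|z|^2)^{-1}\bigr)$ with $h_1$ a polynomial multiple of $\tilde h$ (still in $\C_0\cap L^2$ because $\tilde h$ decays rapidly), $(1+|z|^2)^{-1}\in L^2(\mb{C})$, and $b$ built from $g$, $\partial g$ or $\partial^*g$. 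The estimate \eqref{et1} then follows term by term from the displayed inequality. Your closing remark about invoking the $\Gamma$-product definition of $\mathfrak{E}$ on smeared elements gestures in this direction, but without the two-sided smearing and the commutator step it is not a proof; and your formal evaluation $\mathfrak{E}(\lambda(\tilde h)g)=c'g$, while plausible once domain membership is established, cannot substitute for the domain argument.
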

\begin{proof}
Using the universal properties of the group $\C^*$-algebra
$\C^*(\mathbb{C})$ we see that the representation
$\lambda\in\Rep(\mathbb{C};\C_0(X)\rtimes\mb{C})$ corresponds to a
unique element of
 $\Mor(\C^*(\mathbb{C});\C_0(X)\rtimes\mb{C})$ (which we also denote by $\lambda$). Identifying
$\C^*(\mathbb{C})$ with $\C_0(\mathbb{C})$ (note that we use the self-duality of $\mb{C}$) we  can apply  $\lambda$
to $\tilde h\in \C_0(\mathbb{C})$: $\lambda(\tilde
h)\in\M(\C_0(X)\rtimes\mb{C})$.

In order to show that  $\lambda(\tilde{h})g$ is in the domain of
the averaging map $\D(\mathfrak{E})$ it is enough to express  it
as a linear combination of elements of the form
\begin{equation}\label{e0}\lambda(h_1)b\lambda(h_2)\end{equation} where
$h_1,h_2\in\C_0(\mb{C})\cap L^2(\mb{C})$ and
$b\in\C_0(X)\rtimes\mb{C}$ (see \cite{Ped}).  Let
$T\,\eta\,\C_0(X)\rtimes\mb{C}$ be the image of
$\id\in\C_0(\mathbb{C})$ under
$\lambda\in\Mor(\C_0(\mathbb{C});\C_0(X)\rtimes\mb{C})$:
$T=\lambda(\id)$. Note that
\begin{equation}\label{e1}
\begin{array}{rcl}
\lambda(\tilde{h})g\hsp{=}\lambda(\tilde{h})g(1+T^*T)(1+T^*T)^{-1}\\
\hsp{=}\lambda(\tilde{h})g(1+T^*T)^{-1}+\lambda(\tilde{h}\bar{z})gT(1+T^*T)^{-1}
+\lambda(\tilde{h})\partial^*gT(1+T^*T)^{-1}
\end{array}
\end{equation} where we used the relation linking $\partial^*$ and
$T^*$:
 \[\partial^* g=[g,T^*].\] Note also that
$(1+T^*T)^{-1}=\lambda((1+|z|^2)^{-1})$, hence
\[\lambda(\tilde{h})=\lambda((1+|z|^2)\tilde{h})(1+T^*T)^{-1}.\]
Therefore, the first summand of the right hand side  of \eqref{e1}
is of the form
\[\lambda(\tilde{h})g(1+T^*T)^{-1}=\lambda((1+|z|^2)\tilde{h})((1+T^*T)^{-1}g)\lambda((1+|z|^2)^{-1}).\]
Using the fact that $\tilde{h}$ is of the Schwartz type and $(1+|z|^2)^{-1}\in
L^2(\mb{C})$ we can see
that the above element is of the form \eqref{e0}. Now, by
inequality (10) of paper \cite{Kasp} we get
\[
\|\mathfrak{E}(\lambda(\tilde{h})g(1+T^*T)^{-1})\|\leq\|\tilde{h}\|_2\,\|g\|\,\|(1+|z|^2)^{-1}\|_2
\]
and see that $\|\mathfrak{E}(\lambda(\tilde{h})g(1+T^*T)^{-1})\|$
may be estimated by the right hand side of \eqref{et1} for $c'$
big enough:
\begin{equation}\label{e3}
\|\mathfrak{E}(\lambda(\tilde{h})g(1+T^*T)^{-1})\|\leq
c'\max_{l,k\leq 1}\sup_{x\in X}|\partial^{*l}\partial^{k}g(x)|
\end{equation}
 Let us analyze the second summand of the right hand side of
\eqref{e1}. Note that
\[\lambda(\tilde{h}\bar{z})gT(1+T^*T)^{-1}=\lambda(\tilde{h}|z|^2)g(1+T^*T)^{-1}
+\lambda(\tilde{h}\bar{z})\partial g(1+T^*T)^{-1}.\] A reasoning
similar to the one above shows that there exists a constant $c''$
such that
\begin{equation}\label{e4}
\|\mathfrak{E}(\lambda(\tilde{h}\bar{z})gT(1+T^*T)^{-1})\|\leq
c''\max_{l,k\leq 1}\sup_{x\in X}|\partial^{*l}\partial^{k}g(x)|.
\end{equation}
Similarly, we prove that there exists a constant  $c'''$ such that
\begin{equation}\label{e5}
\|\mathfrak{E}(\lambda(\tilde{h})\partial^*gT(1+T^*T)^{-1})\|\leq
c'''\max_{l,k\leq 1}\sup_{x\in X}|\partial^{*l}\partial^{k}g(x)|.
\end{equation}
Combining \eqref{e1}, \eqref{e3}, \eqref{e4} and \eqref{e5} we get
\eqref{et1} for $c=\max\{c',c'',c'''\}$.
\end{proof}
The above lemma is also true if we replace
$\mathfrak{E}$ with $\mathfrak{E}^\Psi$. An extension of this
lemma to the case of an action of $\mb{C}^2$ gives a proof of
 Theorem \ref{A1}. Using the same techniques  one can
also prove the following theorem:
\begin{thm} \label{A2}Let $f\in\C_{\rm b}(G)$ be a function such
that
\,\,$\partial_l^{k*}\partial_l^{k'}\partial_r^{*m}\partial_r^{m'}f\in\C_{\rm
b}(G)$\,\, whenever $k,k',l,l'\leq 5$. Let
$b_f\in\M(\C_0(G)\rtimes\mb{C}^2)$ be  given by:
\[b_f=\lambda^L(\tilde{h})\bigl((1+\partial_r^*\partial_r)^2(1+\partial_l^*\partial_l)^2f\bigr)
\lambda^R(\tilde{h}).\] Then $b_f\in\D(\mathfrak{E}^\Psi)$,
$\mathfrak{E}^\Psi(b_f)\in\M(A)$ and there exists a positive
constant $c$ such that
\[\|\mathfrak{E}^\Psi(b_f)\|\leq c\max_{k,k',l,l'\leq 5}
\sup_{g\in
G}\left|\partial_l^{k*}\partial_l^{k'}\partial_r^{*m}\partial_r^{m'}f\right|.\]
\end{thm}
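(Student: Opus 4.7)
The plan is to mirror the proof of Theorem \ref{A1}, with modifications to accommodate the fact that $f$ is only bounded rather than vanishing at infinity, and that consequently $\mathfrak{E}^\Psi(b_f)$ should land in $\M(A)$ rather than in $A$. First, I would establish a multiplier-level analogue of Lemma \ref{lema2}: for a bounded function $g\in\C_{\rm b}(X)$ with $\partial^{*l}\partial^{k}g\in\C_{\rm b}(X)$ for $k,l\in\{0,1\}$, the element $\lambda(\tilde{h})g\in\M(\C_0(X)\rtimes\mb{C})$ belongs to $\D(\mathfrak{E})$ and admits the bound \eqref{et1} with the supremum taken over $\C_{\rm b}$. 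The key point is that the decomposition \eqref{e1} still holds on the level of multipliers: writing $\lambda(\tilde{h})g=\lambda(\tilde{h})g(1+T^*T)(1+T^*T)^{-1}$ and using the commutator identity $\partial^*g=[g,T^*]$, each of the three summands factors as $\lambda(h_1)\,b\,\lambda(h_2)$ with $h_1,h_2\in\C_0(\mb{C})\cap L^2(\mb{C})$ and $b\in\M(\C_0(X)\rtimes\mb{C})$ bounded in norm by $\max_{l,k\leq 1}\sup|\partial^{*l}\partial^{k}g|$. Elements of this form lie in $\D(\mathfrak{E})$ by the definition of the generalized $\Gamma$-product averaging map (see \cite{Ped}), and their images satisfy the needed norm estimate.

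Next I would promote this one-variable statement to the $\mb{C}^2$-action $\rho$ (with $\lambda$ replaced by $\lambda=\lambda^L\lambda^R$ and the two smoothing factors $\lambda^L(\tilde h)$, $\lambda^R(\tilde h)$ sandwiching $(1+\partial_r^*\partial_r)^2(1+\partial_l^*\partial_l)^2 f$), and then to the twisted dual action $\hat\rho^\Psi$ in place of $\hat\rho$. The twist $\hat\rho^\Psi$ differs from $\hat\rho$ only by conjugation by unitary multipliers $\lambda_{-\Hat\Psi(\hat\gamma),\Hat\Psi(\hat\gamma')}$, so the domain of $\mathfrak{E}^\Psi$ is described by the same type of factorization condition as that of $\mathfrak{E}$, and all estimates transfer. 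At the end of this step we obtain $b_f\in\D(\mathfrak{E}^\Psi)$ together with the required norm bound.

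It remains to upgrade the conclusion from $\mathfrak{E}^\Psi(b_f)\in\M(B)$ to $\mathfrak{E}^\Psi(b_f)\in\M(A)$. By construction $\mathfrak{E}^\Psi(b_f)$ is automatically $\hat\rho^\Psi$-invariant, so the first Landstad condition of \eqref{laninv} is satisfied. For the second condition, the norm-continuity of $\Gamma^2\ni(\gamma,\gamma')\mapsto\lambda_{\gamma,\gamma'}\mathfrak{E}^\Psi(b_f)\lambda_{\gamma,\gamma'}^*$ follows from the joint norm-continuity of the corresponding action on the $\hat\rho^\Psi$-smooth element $b_f$, a consequence of the smoothness hypothesis on $f$. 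For the third condition one must check that for $x,x'\in\C^*(\Gamma)$ the product $x\,\mathfrak{E}^\Psi(b_f)\,x'$ lies in $B$; here I would use the factorization $\mathfrak{E}^\Psi(b_f)\in \lambda^L(\C_0(\mb{C}))\cdot \M(B)\cdot \lambda^R(\C_0(\mb{C}))$ inherited from the decomposition of $b_f$, together with the fact that $\C^*(\Gamma)\cdot\lambda^L(\C_0(\mb{C}))\subset\C^*(\Gamma^2)\subset\M(B)$ maps into $B$ after further multiplication by any element of $\C_0(G)\subset\M(B)$.

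The main obstacle I expect is verifying the third Landstad condition cleanly, i.e.\ that $\mathfrak{E}^\Psi(b_f)$ is a genuine $A$-multiplier and not merely a $B$-multiplier. The bounded (rather than vanishing) nature of $f$ means that $b_f$ need not lie in the crossed product, and the averaging map produces only a multiplier; showing that this multiplier interacts with $\C^*(\Gamma)$ to land back in $B$ requires leveraging the smoothing factors $\lambda^L(\tilde h)$ and $\lambda^R(\tilde h)$, whose symbols $\tilde h$ are integrable and compensate for the absence of decay of $f$ itself. Modulo this, the rest of the argument is a parallel transcription of the proofs of Lemma \ref{lema2} and Theorem \ref{A1}.
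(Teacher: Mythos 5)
The paper gives no written proof of this theorem: it is stated right after the remark that ``the same techniques'' as in Lemma \ref{lema2} and Theorem \ref{A1} apply, so your plan of transcribing that argument with $\C_0$ replaced by $\C_{\rm b}$ is exactly the intended route. The first two stages of your sketch are sound: the decomposition \eqref{e1} and the commutator identity $\partial^*g=[g,T^*]$ are purely algebraic and survive when the middle factor is only a multiplier, and the resulting summands $\lambda(h_1)\,m\,\lambda(h_2)$ with $h_1,h_2\in\C_0(\mb{C})\cap L^2(\mb{C})$ and $m\in\M(B)$ are still integrable with $\|\mathfrak{E}(\lambda(h_1)m\lambda(h_2))\|\leq\|h_1\|_2\|m\|\|h_2\|_2$; this gives $b_f\in\D(\mathfrak{E}^\Psi)$ and the norm bound.

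The step that does not work as written is your verification of the third Landstad condition. The inclusion $\mathfrak{E}^\Psi(b_f)\in\lambda^L(\C_0(\mb{C}))\cdot\M(B)\cdot\lambda^R(\C_0(\mb{C}))$ is \emph{not} inherited from the factorization of $b_f$: the dual action acts on $\lambda^L(\tilde h)$ and $\lambda^R(\tilde h)$ by translating the symbol $\tilde h$, so each $\hat\rho^\Psi_{\hat\gamma,\hat\gamma'}(b_f)$ is factored with shifted symbols, and the integral over $\Hat\Gamma^2$ of these translates is no longer of factored form (already in the degenerate case $m=1$ the average of $\lambda(\tilde h)$ is a scalar multiple of the identity, which lies in no such product). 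The membership $\mathfrak{E}^\Psi(b_f)\in\M(A)$ should instead be obtained either from the general property of the averaging map that $\mathfrak{E}^\Psi(\lambda(h_1)m\lambda(h_2))\in\M(A)$ for every $m\in\M(B)$ and square-integrable $h_1,h_2$ --- this is part of the machinery behind inequality (10) of \cite{Kasp} and of the $\Gamma$-product theory in \cite{Ped} --- or, more concretely, from the $A$-bimodule property $\mathfrak{E}^\Psi(b_f)\,a=\mathfrak{E}^\Psi(b_f\,a)$ for $a\in A$ (valid since $a$ is $\hat\rho^\Psi$-invariant), which reduces the question to showing that averages of elements with a $B$-valued middle factor lie in $A$, i.e.\ to the situation of Theorem \ref{A1}. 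With that substitution the rest of your argument goes through.
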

The element  $\mathfrak{E}^\Psi(b_f)\in\M(A)$ appearing in the
 above theorem will also be denoted by $\mc{Q}(f)$.
\end{section}
\begin{section}{Counit in Rieffel
deformation}\label{ApD} The aim of this section is to show that a
quantum group $\mb{G}$ obtained by the M. Rieffel deformation
posses a counit. Our argument will be different from the one
given by Rieffel in \cite{Rf2}. Let $G$ be a locally compact
group, $\Gamma\subset G$  its abelian subgroup and $\Psi$ a
$2$-cocycle on $\Hat\Gamma$. Let $\rho$ be the action of
$\Gamma^2$ on $\C_0(G)$ given by the left and right shifts and let
$\rho_\Gamma$ be the corresponding action of $\Gamma^2$ on
$\C_0(\Gamma)$. Note that the restriction morphism
$\pi_\Gamma:\C_0(G)\rightarrow \C_0(\Gamma)$ is
$\Gamma^2$-covariant. Using Proposition 3.8 of \cite{Kasp} we get
 the induced morphism
$\pi_\Gamma^\Psi\in\Mor(\C_0(G)^\Psi;\C_0(\Gamma)^\Psi)$. The
abelianity of $\Gamma$ implies that the dual quantum group of
$(\C_0(\Gamma)^\Psi,\Delta)$ coincides with the quantum group
$(\C^*(\Gamma),\Hat\Delta)$. Therefore
$(\C_0(\Gamma)^\Psi,\Delta)$ coincides with
$(\C_0(\Gamma),\Delta)$. This shows that
$\pi_\Gamma^\Psi\in\Mor(\C_0(G)^\Psi;\C_0(\Gamma))$ and enables us
to define a counit $e$ for $\mb{G}$ by the formula
$e(a)=e_\Gamma(\pi_\Gamma^\Psi(a))$ for any $a\in A$, where
$e_\Gamma:\C_0(\Gamma)\rightarrow\mb{C}$ is the counit for
$(\C_0(\Gamma),\Delta)$.

Let us now draw an important conclusion from the existence of the
counit for $\mb{G}$. Using Proposition 5.16 of \cite{SolWor} we can see
that the universal dual quantum group of
$\widehat{\mb{G}}=(\C_r^*(G),\Delta^\Psi)$ is isomorphic with the
reduced dual: $\mb{G}=(A,\Delta)$. In particular, representations
of $\C^*$-algebra $A$ are in one to one correspondence with
corepresentations of quantum group $\widehat{\mb{G}}$. This follows from
Theorem 5.4 of \cite{SolWor}.
\end{section}
\begin{section}{Complex generator of Heisenberg Lie
algebra}\label{bcg} Let $\mb{H}$ be the Heisenberg group,
$\mathfrak{h}$ its Lie algebra and $\mc{E}$ the enveloping algebra
of $\mathfrak{h}$. $\mc{E}$ is generated by an element
$a\in\mc{E}$ such that the commutator $\lambda=[a^*,a]$ is central
in $\mc{E}$. Let $A$ be a $\C^*$-algebra and let
$U\in\Rep(\mb{H};A)$ be a representation. As was described in the
third chapter of \cite{WorNap}, $U$ induces the map
\[dU:\mc{E}\rightarrow\{\mbox{closed maps on } A\}.\]
By $\D^\infty(U)$ we shall denote the set of $U$-smooth elements
in $A$. In the next definition we shall identify a representation
of $\mb{H}$ in  the $\C^*$-algebra of compact operators
$\mc{K}(\mc{H})$ with the corresponding Hilbert space
representation.

\begin{defin}\label{c0} Let $\mc{H}$ be a Hilbert space and let
$(\tilde{a},\tilde{\lambda})$ be a pair of closed operators acting
on $\mc{H}$. We say that this pair is an infinitesimal
representation of $\mb{H}$ on $\mc{H}$ if there exists a
representation $U\in\Rep(\mb{H};\mc{K}(\mc{H}))$ such that
$dU(a)=\tilde a$ and $dU(\lambda)=\tilde\lambda$.
\end{defin}
The representation $U$ in the above definition is determined by
$\tilde a$, therefore in this context  it will be denoted by
$U^{\tilde a}$. Let  $U\in\Rep(\mb{H};\C^*(\mb{H}))$ be the
canonical representation of $\mb{H}$. The map $dU$ in this case is
injective, which enables us to identify $dU(T)$ with $T\in\mc{E}$.
The aim of this section is to show that $a\in\mc{E}$ is affiliated
with $\C^*(\mb{H})$. In fact one can
 prove that $a$ generates $\C^*(\mb{H})$ in the sense of
Woronowicz but we shall not use and so will not prove this fact. 

Let
$M\in\mc{E}$. The criterion for a map\,\, $dU(M):\D(dU(M))\rightarrow
\C^*(\mb{H})$\,\, to be affiliated with $\C^*(\mb{H})$ is provided by
 Theorem 2.1 of \cite{WorNap}. Our proof that
$a\,\eta\C^*(\mb{H})$  uses a different approach which is based
on the explicit construction of the semigroup\,\, $\displaystyle
\mb{R}_+\ni t\mapsto \exp(-ta^*a)\in\M(\C^*(\mb{H}))$.
\begin{thm} Let $a$ be the complex generator  of the
algebra $\mc{E}$. Then $a$ is affiliated with $\C^*(\mb{H})$.
\end{thm}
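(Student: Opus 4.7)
The strategy announced in the statement is to realize the semigroup $\exp(-ta^*a)$ as an explicit element of $\M(\C^*(\mb{H}))$ for every $t>0$, and to extract affiliation from this data using the $z$-transform criterion. Write $a=X+iY$ with $X,Y$ the standard real basis of $\mathfrak{h}$, let $T$ be the central generator (so that $[X,Y]$ is a scalar multiple of $T$), and form $L=-(X^{2}+Y^{2})$, the negative sub-Laplacian. A direct bracket computation shows $a^*a=-L+icT$ for an explicit imaginary constant $c$. Since $T$ is central, the two summands strongly commute, so formally
\begin{equation*}
\exp(-ta^*a)=\exp(tL)\,\exp(-ictT).
\end{equation*}
My plan is to take this as a \emph{definition} of $P_{t}\in\M(\C^*(\mb{H}))$ and then to check that $P_{t}$ has all the properties of a heat semigroup generated by $a^*a$.

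First I would construct $\exp(tL)$. This is convolution on $\mb{H}$ with the well-known heat kernel $p_{t}$ of the Heisenberg sub-Laplacian; $p_{t}$ lies in the Schwartz space of $\mb{H}$, hence in $L^{1}(\mb{H})$, so convolution with $p_{t}$ produces an honest element of $\C^*(\mb{H})$, which is positive, self-adjoint, of norm $\le 1$, and satisfies the semigroup identity in $t$. Next I would construct $\exp(-ictT)$ as a unitary multiplier: $T$ generates the one-parameter central subgroup $Z(\mb{H})\subset\mb{H}$, so the functional calculus embedding $\C^*(Z(\mb{H}))\hookrightarrow\M(\C^*(\mb{H}))$ delivers the unitary $\exp(-ictT)$ and shows it commutes with every element of $\M(\C^*(\mb{H}))$. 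Setting $P_{t}=\exp(tL)\exp(-ictT)\in\M(\C^*(\mb{H}))$, the semigroup identity $P_{s}P_{t}=P_{s+t}$ and strict continuity with $P_{0}=1$ then follow from the corresponding properties of each factor.

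With the semigroup in hand I would define
\begin{equation*}
R=\int_{0}^{\infty}e^{-t}P_{t}\,dt\in\M(\C^*(\mb{H})),
\end{equation*}
and verify, using strict continuity of $P_{t}$ at $t=0$, that $R$ is a positive self-adjoint multiplier with norm $\le 1$ and dense range; fibrewise (i.e.\ in every irreducible representation of $\mb{H}$) one recognises $R$ as $(1+a^*a)^{-1}$, so this notation is unambiguous. Then $R^{1/2}=(1+a^*a)^{-1/2}\in\M(\C^*(\mb{H}))$ by continuous functional calculus. To define the $z$-transform $z_{a}=a(1+a^*a)^{-1/2}$ I would use the integral representation
\begin{equation*}
(1+a^*a)^{-1/2}=\frac{1}{\sqrt{\pi}}\int_{0}^{\infty}t^{-1/2}e^{-t}P_{t}\,dt
\end{equation*}
and note that the smoothing properties of $P_{t}$ (coming from the rapid decay of $p_{t}$) imply that $aP_{t}$ again lies in $\M(\C^*(\mb{H}))$, so that $z_{a}$ is bounded and belongs to the multiplier algebra. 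Finally, the dense-range condition $(1-z_{a}^{*}z_{a})\C^*(\mb{H})$ dense reduces to dense range of $R$, already established. This yields $a\,\eta\,\C^*(\mb{H})$.

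The main technical obstacle is neither the integration nor the algebraic identities but rather the rigorous identification between the symbolic object $a^*a$ (built inside the enveloping algebra via $dU$) and the analytic object $P_{t}$: one must justify that $aP_{t}$ is a bounded multiplier and that $P_{t}$ really is the heat semigroup of $a^*a$ in the sense required. This is handled by testing against arbitrary Hilbert-space representations of $\mb{H}$, where both sides become explicitly computable via the Schrödinger picture, and invoking the faithfulness of the regular representation of $\C^*(\mb{H})$ on $L^{2}(\mb{H})$.
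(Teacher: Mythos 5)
Your overall strategy is the same as the paper's: construct the semigroup $e^{-ta^*a}$ explicitly as a contractive element of $\M(\C^*(\mb{H}))$ and pass to the resolvent $\int_0^\infty e^{-t}P_t\,dt$. The realization of the semigroup differs only in presentation: you take the heat kernel $p_t$ of the sub-Laplacian on $\mb{H}$ and twist by a central unitary, while the paper smears $U_{z,0}$ over the non-central directions against the Mehler-type kernel $h_t(z,\tfrac12\lambda)$; the two are related by a partial Fourier transform in the central variable, and your version has the mild advantage that $p_t\in L^1(\mb{H})$ makes the multiplier bound immediate, where the paper must first cut off with $f(\lambda)$ to make its integral converge. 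Both routes face the same identification of the symbolic $a^*a$ with the analytic generator of the semigroup, which you correctly isolate as the technical core.

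The genuine gap is in your last step, the passage from $R=(1+a^*a)^{-1}\in\M(\C^*(\mb{H}))$ to $z_a=aR^{1/2}\in\M(\C^*(\mb{H}))$. Your justification is that $aP_t\in\M(\C^*(\mb{H}))$ and that one can integrate $t^{-1/2}e^{-t}aP_t$. But computing fibrewise over the irreducible representations, $\|aP_t\|^2=\sup_{x\in\Sp(a^*a)}xe^{-2tx}=(2et)^{-1}$, so $\|aP_t\|\asymp t^{-1/2}$ and
\begin{equation*}
\int_0^1 t^{-1/2}e^{-t}\,\|aP_t\|\,dt\;\asymp\;\int_0^1\frac{dt}{t}\;=\;\infty,
\end{equation*}
so the integral you propose for $aR^{1/2}$ is not absolutely convergent, and you cannot pull the merely closed operator $a$ under the improper integral without an a priori bound on $aR^{1/2}$ --- which is exactly what you are trying to establish. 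The same $t^{-1}$ divergence reappears if you trade subordination for the resolvent formula $(1+a^*a)^{-1/2}=\pi^{-1}\int_0^\infty s^{-1/2}(1+s+a^*a)^{-1}\,ds$, since $\|a(1+s+a^*a)^{-1}\|\asymp s^{-1/2}$; as $\|z_a\|=1$ exactly, any absolute-convergence argument of this kind is doomed. The repair is to drop the $z$-transform altogether: from $(1+a^*a)Rb=b$ (checked on $b\in\D^\infty(U)$, where both sides are computable) you already know that $(1+a^*a)\D^\infty(U)$ is dense in $\C^*(\mb{H})$, and Proposition 2.2 of \cite{Worun} converts this range density directly into $a\,\eta\,\C^*(\mb{H})$. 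This is precisely the endgame of the paper's proof, and it uses only $R$, never $R^{1/2}$.
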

\begin{proof}
For any $z\in\mathbb{C}$, $x\in\mathbb{R}$ and $t\in\mathbb{R}_+$
we set
\begin{equation}\label{defh}h_t(z,x)=\frac{x\exp tx}{4\pi\sinh
tx}\exp\left(-\frac{|z|^2x\coth tx}{4}\right)\in
\mathbb{R}_+.\end{equation} We would like to define an element
$H_t\in\M(\C^*(\mb{H}))$ by the integral
\[H_t=\int_{\mb{C}} d^2z\,h_t\left(z,\frac{1}{2}\lambda\right)U_{z,0}\]
but there is a problem with its convergence. To circumvent it we observe that
 for any $b\in\C^*(\mb{H})$ and
 $f\in\C^\infty_c(\mb{C})$ the
 integral
 \[\int d^2z\,h_t\left(z,\frac{1}{2}\lambda\right)U_{z,0}f(\lambda)b\]  converges in the  norm sense
and the following inequality holds:
 \[\left\|\int
 d^2z\,h_t\left(z,\frac{1}{2}\lambda\right)U_{z,0}f(\lambda)b\right\|\leq
 \Big\|f(\lambda)b\Big\|.\]
 Hence $H_t$ is well defined on the elements of the form
 $f(\lambda)b$ and by the above inequality it can be extended
 to the whole $\C^*(\mb{H})$ giving a selfadjoint element of $\M(\C^*(\mb{H}))$.
Let us list some properties of $H_t$.
 \begin{itemize}
 \item[1.] The map\,\, $\mb{R}_+\ni t\mapsto H_t\in\M(\C^*(\mb{H}))$\,\,
 is a norm-continuous semigroup  and $\|H_t\|\leq 1$.
 \item[2.] $\displaystyle\lim_{t\rightarrow 0}H_tb=b$ for any
 $b\in\C^*(\mb{H})$.
 \item[3.] For any $b\in\D^\infty(U)$ the
 map $\mb{R}_+\ni t\mapsto H_tb$ is differentiable and
 \[\left.\frac{d}{dt}H_tb\right|_{t=0}=-a^*a\,b.\]
 \end{itemize}
Point 1 of this list enables us to define the element
$\Xi\in\M(\C^*(\mb{H}))$:
 \[\Xi=\int_{\mb{R}_+}dt\, e^{-t}H_t.\] Using point 2 and 3 we can check that
 for any $b\in\D^\infty(U)$ we have
 $\Xi b\in\D^\infty(U)$ and \[(1+a^*a)\Xi b=b.\] This shows that
 $\overline{(1+a^*a)\D^\infty(U)}^{\|\cdot\|}= \C^*(\mb{H})$, which by Proposition 2.2 of \cite{Worun} is
 sufficient for $a$ to be affiliated with
 $\C^*(\mb{H})$.
\end{proof}
\begin{rem} Let $\mc{H}$ be a Hilbert space.
 Analyzing the above proof one can conclude that given
any representation $\pi\in\Rep(\C^*(\mb{H});\mc{H})$, a compactly
supported function $f\in\C_0(\mb{C})$ and $v\in\mc{H}$ we have
\begin{equation}\displaystyle\label{asa}\exp(-t\pi(a)^*\pi(a))\pi(f(\lambda))v=
\int_{\mb{C}}
d^2z\,h_t\left(z,\frac{1}{2}\pi(\lambda)\right)\pi(U_{z,0}f(\lambda))v\end{equation}
where the integral on the right is taken in the sense of norm
topology on $\mc{H}$. Let us also note that given any $v\in\mc{H}$
such that the differential
\[\frac{\partial}{\partial z}\pi(U_{z,0})v\Big|_{z=0}\] exists, we
have $v\in\D(\pi(a))$ and
\begin{equation}\label{remdif}
\pi(a)h=2\frac{\partial}{\partial z}\pi(U_{z,0})v\Big|_{z=0}.
\end{equation}
Further, let $B$ be a $\C^*$-algebra  and
$\pi\in\Mor(\C^*(\mb{H});B)$. For any $b\in B$, such that the
differential
\[\frac{\partial}{\partial z}\pi(U_{z,0})b\Big|_{z=0}\] exists,
we have $b\in\D(\pi(a))$ and
\begin{equation}\label{remdif1}
\pi(a)b=2\frac{\partial}{\partial z}\pi(U_{z,0})b\Big|_{z=0}.
\end{equation}
\end{rem}
\end{section}
\begin{section}{Product of affiliated elements}
Let $A$ be a $\C^*$-algebra, and $T_1,T_2\,\eta\, A$. In general,
 the product of $T_1$ and $T_2$ is not well defined,
  but it can be defined,  assuming that $T_1$ and $T_2$ commute in a good sense.
  The construction of the product given here is a generalization
of the case when $A=A_1\ot A_2$, $T_1=S_1\ot 1$ and $T_2=1\ot S_2$,
where $S_1\,\eta\,A_1$ and $S_2\,\eta\,A_2$. Then the product
of $T_1$ and $T_2$ is the tensor product $S_1\ot
S_2\,\eta\,A_1\ot A_2$ whose construction was  described in \cite{WorNap}.
\begin{defin}\label{c1}
Let $A$ be a $\C^*$-algebra and let $T_1,T_2$ be elements
affiliated with $A$. Let $z_1,z_2\in\M(A)$ be $z$-transforms of
$T_1$ and $T_2$ respectively. We say that $T_1$ and $T_2$ strongly
commute if
\begin{eqnarray}\label{zjd}z_1z_2\hsp{=}z_2z_1,\\
\label{zjdg} z_1^*z_2\hsp{=}z_2z_1^*.
\end{eqnarray}
Let $T_1$ and $T_2$ be a pair of closed operators acting on a
Hilbert space $\mc{H}$. We say that $T_1$ and $T_2$ strongly
commute if they  strongly commute as elements affiliated with the
algebra of compact operators $\mc{K}(\mc{H})$.
\end{defin}
\begin{thm}\label{thmc2} Let $A$ be a $\C^*$-algebra and let $T_1,T_2\,\eta\, A$ be a
strongly commuting pair of affiliated elements. Let us consider
the set $\D(T_0)=\{a\in\D(T_2):T_2a\in\D(T_1)\}$ and define an
operator $T_0:\D(T_0)\rightarrow A$ by the formula
$T_0a=T_1(T_2a)$. Then $T_0$ a is closable operator acting  on the
Banach space $A$ and its closure $T^{\rm cl}_0$ is affiliated with
$A$. This closure will be denoted by $T_1T_2$. We also have
$T_1T_2=T_2T_1$.
\end{thm}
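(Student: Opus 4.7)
The plan is to construct $T_1 T_2$ via joint functional calculus on the $z$-transforms, exploiting the commutative $C^*$-subalgebra of $M(A)$ that they generate. Let $z_i = z_{T_i} \in M(A)$. The strong commutativity hypotheses \eqref{zjd}--\eqref{zjdg}, together with their adjoints, imply that $\{z_1, z_1^*, z_2, z_2^*\}$ pairwise commute, and hence generate a commutative unital $C^*$-subalgebra $C \subset M(A)$. Inside $C$ I would form the positive elements $P = (1-z_1^*z_1)^{1/2}(1-z_2^*z_2)^{1/2}$ and $D = P^2 + (z_1 z_2)^*(z_1 z_2) = (1-z_1^*z_1)(1-z_2^*z_2) + z_1^*z_1\,z_2^*z_2$. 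Using the standard identities $\D(T_i) = (1-z_i^*z_i)^{1/2}A$ and $T_i((1-z_i^*z_i)^{1/2}b) = z_i b$, together with the relation $z_2(1-z_1^*z_1)^{1/2} = (1-z_1^*z_1)^{1/2}z_2$ from commutativity, a direct calculation yields $PA \subset \D(T_0)$ with the bounded formula $T_0(Pb) = z_1 z_2 b$ for every $b \in A$. Since $(1-z_i^*z_i)A$ is dense in $A$ for each $i$ and the two factors of $P$ commute, $PA$ is dense in $A$; hence $\D(T_0)$ is dense, and the formula is manifestly symmetric in $T_1,T_2$, which will ultimately yield $T_1T_2 = T_2T_1$.

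For closability, I would introduce $\Phi \colon A \to A \oplus A$, $\Phi(b) = (Pb,\ z_1z_2 b)$, whose image lies in the graph of $T_0|_{PA}$. The identity $\Phi(b)^*\Phi(b) = b^*Db$ controls the Hilbert-module norm of $\Phi(b)$. If $\Phi(b_n) \to (0, y)$ in $A \oplus A$, then multiplying the first coordinate by $z_1z_2$ and using $z_1z_2 P = P z_1z_2$ gives $P(z_1z_2 b_n) \to 0$, so $Py = 0$. But $P$ is a positive element of $M(A)$ with $PA$ dense in $A$, and hence injective as a left multiplier: $Py = 0$ implies $y^*(Pz) = 0$ for all $z \in A$, so by density $y^*A = 0$, whence $y^*y = 0$ and $y = 0$. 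Thus the closure of $\Phi(A)$ is the graph of a closed operator $T$.

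To identify $T$ as affiliated with $A$, I would construct its $z$-transform directly. The formal candidate $z_0 = z_1 z_2\, D^{-1/2}$ is problematic because $D$ need not be invertible in $M(A)$, so I would regularize by setting $z_0^\varepsilon = z_1 z_2(D+\varepsilon)^{-1/2} \in M(A)$ for $\varepsilon > 0$. The inequality $(z_1z_2)^*(z_1z_2) \leq D$ in $C$ gives $\|z_0^\varepsilon\| \leq 1$ uniformly. One then shows that $z_0^\varepsilon$ converges strictly in $M(A)$ to a contraction $z_0 \in M(A)$, that $(1-z_0^*z_0)A$ is dense (it contains $P^2 A$, which is dense by the separate density of $(1-z_i^*z_i)A$ and the commutativity of the factors), and that a graph comparison identifies $T$ with the affiliated element whose $z$-transform is $z_0$. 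Proposition~2.2 of \cite{Worun} then formalizes the affiliation. Since $T$ is closed and extends $T_0$, the closure $T_0^{\mathrm{cl}}$ coincides with $T$, and the symmetry $z_1z_2 = z_2z_1$ in the formula $T_0(Pb) = z_1z_2 b$ forces the same element to arise from the $T_2T_1$ construction.

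The main obstacle is this final step. The difficulty is that $D$ can fail to be invertible in $M(A)$, which rules out a one-line definition of $z_0$ by continuous functional calculus inside $C$; one is forced to take strict limits of $z_0^\varepsilon$ within $M(A)$ and to verify that both the convergence and the density condition defining affiliation survive the passage to the limit. Controlling this regularization is where the technical work of the proof is concentrated.
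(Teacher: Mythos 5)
Your opening claim is false and it contaminates a key step. Strong commutativity gives only the cross-relations: each of $z_1,z_1^*$ commutes with each of $z_2,z_2^*$. It does \emph{not} make $z_i$ commute with $z_i^*$ --- the $z$-transform of an affiliated element is normal only when the element is, and in this paper the elements to which the theorem is applied (e.g.\ $\ha$, $\hd$) are not normal. So there is no commutative $C^*$-algebra $C$ containing all four elements, and the identity $z_1z_2P=Pz_1z_2$ that you invoke in the closability argument fails. The correct intertwining relation, obtained from $z_i(1-z_i^*z_i)^{1/2}=(1-z_iz_i^*)^{1/2}z_i$ together with the cross-commutations, is $z_1z_2\,P=P'\,z_1z_2$ with $P'=(1-z_1z_1^*)^{1/2}(1-z_2z_2^*)^{1/2}$. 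This particular step is repairable --- $(1-z_iz_i^*)^{1/2}A=\D(T_i^*)$ is dense, hence $P'A$ is dense and $P'y=0$ still forces $y=0$ --- but as written the argument uses a wrong identity, and several later manipulations (for instance moving $(D+\varepsilon)^{-1/2}$ past $z_1z_2$) need the same care, since $D$ commutes with $z_1z_2$ only up to replacing $z_i^*z_i$ by $z_iz_i^*$.

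The second and larger gap is the one you flag yourself: the affiliation step. You propose to regularize $z_0^\varepsilon=z_1z_2(D+\varepsilon)^{-1/2}$ and pass to a strict limit, but neither the strict convergence nor the identification of the limit with the $z$-transform of the closed operator is carried out, and this is exactly where the analytic content sits (it is delicate precisely because $D$ need not be bounded below). The paper sidesteps the issue entirely: it forms the matrix
\[Q=\begin{pmatrix}(1-z_1^*z_1)^{1/2}(1-z_2^*z_2)^{1/2}&-z_1^*z_2^*\\ z_1z_2&(1-z_1z_1^*)^{1/2}(1-z_2z_2^*)^{1/2}\end{pmatrix}=\begin{pmatrix}P&-z_1^*z_2^*\\ z_1z_2&P'\end{pmatrix}\]
and invokes Theorem 2.3 of \cite{Worun}, which produces an affiliated element whose graph is the closure of $\{(Pa,z_1z_2a):a\in A\}$ --- the same submodule you call $\overline{\Phi(A)}$ --- directly from the algebraic relations among the entries and the density of the ranges of the two columns; the second column simultaneously encodes the graph of the adjoint, and no $z$-transform is ever exhibited. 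To complete your proof you would either have to supply the missing convergence and identification arguments, or, more economically, verify the hypotheses of Woronowicz's Theorem 2.3 for $Q$, which is what the paper does.
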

\begin{proof}
We define $T_1T_2$ using the method described in Theorem 2.3 of
\cite{Worun}. The related matrix $Q\in\M(A)\ot\M(\mb{C}^2)$ has
the form:
\[Q=\begin{pmatrix}
  (1-z_1^*z_1)^{\frac{1}{2}}(1-z_2^*z_2)^{\frac{1}{2}} & -z_1^*z_2^* \\
  z_1z_2 & (1-z_1z_1^*)^{\frac{1}{2}}(1-z_2z_2^*)^{\frac{1}{2}}
\end{pmatrix}.\] (compare with the  matrix $Q$ from the
proof of Theorem 6.1 of \cite{WorNap}). $Q$ satisfies all the
assumptions of Theorem 2.3, hence it gives rise to an affiliated
element. We leave it to the reader to check that this affiliated
element is $T\in A^\eta$ of our theorem.
\end{proof}
For the needs of this paper we shall prove the following lemmas.
\begin{lem} \label{lemcore}Let $A$ be a $\C^*$-algebra, $T$
an element affiliated with $A$ and  $X$  a dense subspace of
$\D(T)$. Then :
\begin{itemize}
\item[(1)] if
$(1+T^*T)^{\frac{1}{2}}X$ is dense in $A$, then $X$ is a core of
$T$;
\item[(2)] if $X\subset\D(T^*T)$ and $(1+T^*T)X$ is dense in $A$,
then $X$ is a core of $T$.
\end{itemize}
\end{lem}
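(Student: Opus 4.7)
The plan is to exploit the standard parametrization of $\D(T)$ by $A$ via the $z$-transform. For any $T\,\eta\,A$, both $z_T=T(1+T^*T)^{-1/2}$ and $(1+T^*T)^{-1/2}$ lie in $\M(A)$, and every $a\in\D(T)$ has the unique form $a=(1+T^*T)^{-1/2}b$ with $b\in A$, with $Ta=z_Tb$. Boundedness of $(1+T^*T)^{-1/2}$ and $z_T$ means that convergence of the ``parameter'' $(1+T^*T)^{1/2}x_n\to b$ in $A$ automatically produces the simultaneous convergences $x_n=(1+T^*T)^{-1/2}\bigl[(1+T^*T)^{1/2}x_n\bigr]\to a$ and $Tx_n=z_T\bigl[(1+T^*T)^{1/2}x_n\bigr]\to Ta$. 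Hence checking that $X$ is a core of $T$ (i.e.\ that $\overline{\Graph(T|_X)}=\Graph(T)$ inside $A\oplus A$) reduces to verifying that $(1+T^*T)^{1/2}X$ is dense in $A$.

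For part (1) this density is exactly the hypothesis, so the proof is essentially one line: given $a\in\D(T)$, set $b=(1+T^*T)^{1/2}a\in A$, pick $x_n\in X$ with $(1+T^*T)^{1/2}x_n\to b$, and read off $(x_n,Tx_n)\to(a,Ta)$ from the remark above.

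For part (2) I would deduce the hypothesis of (1) from the given data $X\subset\D(T^*T)$ and $(1+T^*T)X$ dense in $A$. Using $X\subset\D(T^*T)$ one has the identity $(1+T^*T)^{1/2}x=(1+T^*T)^{-1/2}\bigl[(1+T^*T)x\bigr]$ for $x\in X$, hence $(1+T^*T)^{1/2}X=(1+T^*T)^{-1/2}\bigl[(1+T^*T)X\bigr]$. Applying the bounded operator $(1+T^*T)^{-1/2}\in\M(A)$ to a dense subset of $A$ produces a set dense in the range $(1+T^*T)^{-1/2}A=\D(T)$, which is itself dense in $A$. Therefore $(1+T^*T)^{1/2}X$ is dense in $A$, and part (1) applies.

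The argument is mostly bookkeeping with continuous multipliers; the only substantive input, and the closest thing to an obstacle, is the structural description $\D(T)=(1+T^*T)^{-1/2}A$, which is the standard characterization of the domain of an affiliated element taken from \cite{WorNap}.
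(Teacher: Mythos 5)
Your proof is correct and follows essentially the same route as the paper: both arguments rest on the parametrization $\D(T)=(1+T^*T)^{-1/2}A$ with $Ta=z_Tb$, reduce part (1) to the observation that $(1+T^*T)^{-1/2}X'$ is a core of $T$ whenever $X'\subset A$ is dense (which you spell out and the paper declares ``easy to see''), and handle part (2) by noting that $(1+T^*T)^{-1/2}$ maps dense sets to dense sets since its range $\D(T)$ is dense. No gaps; the only unstated input in both versions is the standard $z$-transform description of $\D(T)$ from \cite{WorNap}, which you correctly identify.
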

\begin{proof} It is easy to see that for any dense subspace $X'\subset
A$ the set $(1+T^*T)^{-\frac{1}{2}}X'$ is a core of $T$. Taking
$X'=(1+T^*T)^{\frac{1}{2}}X$ we get the proof of  point (1) of our
lemma. To prove point (2) note that $(1+T^*T)^{-\frac{1}{2}}X'$ is
dense in $A$ whenever $X'$ is dense in $A$. Applying this to the
set $(1+T^*T)X$ of point (2) we see that $(1+T^*T)^{\frac{1}{2}}X$
is dense in $A$. Using point (1) we conclude that $X$ is a core of
$T$.
\end{proof}
\begin{lem} \label{corprod1} Let $T_1,T_2\in A^\eta$ strongly commute
 and let $X\subset A$ be a dense subspace. Then the set
\[(1+(T_1T_2)^*(T_1T_2))(1+T_1^*T_1)^{-1}(1+T_2^*T_2)^{-1}X\] is dense
in $A$. In particular $(1+T_1^*T_1)^{-1}(1+T_2^*T_2)^{-1}X$ is a
core of $T_1T_2$.\end{lem}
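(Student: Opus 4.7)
The plan is to set $S := T_1T_2 \in A^\eta$ (well defined by Theorem \ref{thmc2}) and to show that the formally unbounded expression $(1+S^*S)(1+T_1^*T_1)^{-1}(1+T_2^*T_2)^{-1}$ is in fact a bounded multiplier $M \in \M(A)$ with dense range. Writing $F := (1+T_1^*T_1)^{-1}(1+T_2^*T_2)^{-1}$ and $E := (1+S^*S)^{-1}$ (both in $\M(A)$), density of $MA$ together with continuity of $M$ will force density of $MX = (1+S^*S)FX$ for any dense $X$; Lemma \ref{lemcore}(2) then delivers the core statement.

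First I would exploit strong commutativity: combined with Theorem \ref{thmc2} it ensures that $T_1^*T_1$, $T_2^*T_2$ and $S^*S = T_1^*T_1 \cdot T_2^*T_2$ pairwise strongly commute, so all relevant objects live inside one joint commutative functional calculus. Setting $a := z_1^*z_1$, $b := z_2^*z_2$, one has $F = (1-a)(1-b)$, and a direct manipulation in the commutative calculus yields
\[
(1+S^*S) F = (1-a)(1-b) + ab =: M,
\]
a positive multiplier with $0 \leq M \leq 1$; the symmetric identity $M E = F$ follows the same way. Checking that $F A \subset \D(S^*S)$ and that the product $(1+S^*S)F$ really extends to the bounded multiplier $M$ is the step I expect to be the main obstacle, since it requires careful use of the functional calculus for strongly commuting unbounded affiliated elements built in Theorem \ref{thmc2}. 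Once this is settled, the remaining arguments are formal.

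To conclude, I would prove $MA$ is dense. Each $T_i \,\eta\, A$ gives $(1+T_i^*T_i)^{-1/2} A$ dense in $A$, and a simple double approximation (take $(1+T_i^*T_i)^{-1/2} x_n \to y$, then $(1+T_i^*T_i)^{-1/2} w_{n,k} \to x_n$) promotes this to density of $(1+T_i^*T_i)^{-1} A$; iterating once more with the two commuting factors gives $FA$ dense. Combining with $F = M E$ and $E A \subset A$ yields $F A = M(E A) \subset MA$, so $MA$ is dense. Continuity of $M$ together with density of $X$ then forces $MX = (1+S^*S)FX$ to be dense in $A$, which is the first assertion. For the core statement, put $X' := FX$: the identity of the previous paragraph shows $X' \subset \D(S^*S)$ and $(1+S^*S)X' = MX$ is dense, so Lemma \ref{lemcore}(2) identifies $X'$ as a core of $S = T_1T_2$.
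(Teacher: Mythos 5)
Your proposal is correct, and its engine is the same as the paper's: both proofs reduce everything to the identity
\[
(1+(T_1T_2)^*(T_1T_2))(1+T_1^*T_1)^{-1}(1+T_2^*T_2)^{-1}=(1-a)(1-b)+ab,\qquad a=z_{T_1}^*z_{T_1},\ b=z_{T_2}^*z_{T_2},
\]
read inside the joint commutative functional calculus of the strongly commuting pair. The step you single out as the main obstacle (that $(1+T_1^*T_1)^{-1}(1+T_2^*T_2)^{-1}A\subset\D(S^*S)$ and that $(1+S^*S)F$ really is the bounded multiplier $M$) is exactly the step the paper also dispatches with a bare ``Note that'', so you are at comparable rigor there. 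Where you genuinely diverge is the density argument. The paper introduces the two functions $f(x_1,x_2)=(1-x_1^2)(1-x_2^2)+x_1^2x_2^2$ and $g(x_1,x_2)=(1-x_1^2)^{1/2}(1-x_2^2)^{1/2}$, observes that the zero set of $f$ is contained in that of $g$, and invokes Proposition 6.2 of \cite{WorNap} to get $\overline{g(\cdot)X}\subset\overline{f(\cdot)X}$, whence density since $g(\cdot)X=(1+T_1^*T_1)^{-1/2}(1+T_2^*T_2)^{-1/2}X$ is dense. You instead factor $F=ME$ with $E=(1+S^*S)^{-1}\in\M(A)$, deduce $FA=M(EA)\subset MA$ so $MA$ is dense, and then use continuity of $M$ to pass from $\overline{X}=A$ to $\overline{MX}\supset MA$. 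Your route avoids the external zero-set proposition and is more elementary, at the price of having to justify the extra functional-calculus identity $ME=F$ (which, like the main identity, rests on knowing that $S^*S$ corresponds to $\frac{a}{1-a}\cdot\frac{b}{1-b}$ in the joint calculus); the paper's route concentrates all such verifications into the single citation of Proposition 6.2 of \cite{WorNap}. Your final appeal to Lemma \ref{lemcore}(2) with $X'=FX$ matches the paper's intent exactly.
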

\begin{proof}
Note that
\[(1+(T_1T_2)^*(T_1T_2))(1+T_1^*T_1)^{-1}(1+T_2^*T_2)^{-1}=(1+(T_1^*T_1)(T_2^*T_2))(1+T_1^*T_1)^{-1}(1+T_2^*T_2)^{-1}.\]
 We express the right hand side of the above
equation using $z$-transforms of $T_1$ and $T_2$:
\[(1-z_{|T_1|}^{2})(1-z_{|T_2|}^{2})+z_{|T_1|}^2z_{|T_2|}^2.\] Let
 $f:[0,1]\times[0,1]\rightarrow \mb{R}_+$  be the function defined
by
\[f(x_1,x_2)=(1-x_1^2)(1-x_2^2)+x_1^2x_2^2.\]
Note that $f(x_1,x_2)=0$ if and only if $x_1=1$ and $x_2=0$ or
$x_1=0$ and $x_2=1$. Let us  also define a function
$g:[0,1]\times[0,1]\rightarrow\mb{R}$  by the formula:
\[g(x_1,x_2)=(1-x_1^2)^{\frac{1}{2}}(1-x_2^2)^{\frac{1}{2}}.\]
We have the following implication:
\[(f(x_1,x_2)=0)\Rightarrow(g(x_1,x_2)=0).\]
Using Proposition 6.2 of \cite{WorNap} we get the following
inclusion:
\[\overline{(1+T_1^*T_1)^{-\frac{1}{2}}(1+T_2^*T_2)^{-\frac{1}{2}}X}^{\|\cdot\|}\subset
\overline{(1+(T_1^*T_1)(T_2^*T_2))(1+T_1^*T_1)^{-1}(1+T_2^*T_2)^{-1}X}^{\|\cdot\|}.\]
We end the proof by noting that
$A=\overline{(1+T_1^*T_1)^{-\frac{1}{2}}(1+T_2^*T_2)^{-\frac{1}{2}}X}^{\|\cdot\|}$.
\end{proof}
\begin{lem} \label{lemc5} Let $T_1,T_2\in A^\eta$ be a strongly commuting pair of operators
and let $Y\subset \D(T_2^*T_2)$  be such that $\displaystyle(1+T_2^*T_2)Y$ is
dense in $A$. Then the set
\[(1+(T_1T_2)^*(T_1T_2))(1+T_1^*T_1)^{-1}Y\] is dense in $A$.
\end{lem}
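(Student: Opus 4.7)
The plan is to reduce this lemma to the immediately preceding Lemma \ref{corprod1} by choosing a particularly convenient dense subspace $X \subset A$. Specifically, I would set $X = (1 + T_2^*T_2)Y$. By the hypothesis of Lemma \ref{lemc5}, this $X$ is dense in $A$, so Lemma \ref{corprod1} applies and yields that
\[
(1+(T_1T_2)^*(T_1T_2))(1+T_1^*T_1)^{-1}(1+T_2^*T_2)^{-1}X
\]
is dense in $A$.

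The next step is to recognize that this expression simplifies to exactly the set appearing in the statement. Since $(1+T_2^*T_2)\,\eta\,A$ is positive and self-adjoint, its $z$-transform is positive with spectrum bounded away from $0$ relative to the value $1$, so $(1+T_2^*T_2)^{-1}$ exists as a bounded element of $\M(A)$. For each $y \in Y \subset \D(T_2^*T_2)$ the element $(1+T_2^*T_2)y$ lies in $A$, and applying the bounded inverse gives
\[
(1+T_2^*T_2)^{-1}\bigl((1+T_2^*T_2)y\bigr) = y.
\]
Hence $(1+T_2^*T_2)^{-1}X = Y$, and the density statement above becomes precisely the claim that $(1+(T_1T_2)^*(T_1T_2))(1+T_1^*T_1)^{-1}Y$ is dense in $A$.

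The only substantive thing to check is the set-theoretic identity $(1+T_2^*T_2)^{-1}X = Y$; everything else is an invocation of Lemma \ref{corprod1}. There is no real obstacle here beyond being careful that $(1+T_2^*T_2)$ is injective with bounded inverse on $A$ so that the cancellation is genuine. This is exactly why the hypothesis was phrased as $Y \subset \D(T_2^*T_2)$ together with density of $(1+T_2^*T_2)Y$: it is the right amount of information to let one perform this single substitution and recover the desired conclusion from the more symmetric statement of Lemma \ref{corprod1}.
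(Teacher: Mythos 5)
Your proposal is correct and is exactly the paper's argument: the paper's own proof consists of the single line that the lemma follows from Lemma \ref{corprod1} with $X=(1+T_2^*T_2)Y$. Your additional verification that $(1+T_2^*T_2)^{-1}X=Y$ via the boundedness of $(1+T_2^*T_2)^{-1}$ is the (implicit) cancellation the paper leaves to the reader.
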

\begin{proof} The proof of this lemma follows from the previous one
with $X=(1+T_2^*T_2)Y$.
\end{proof}
\end{section}
\end{appendices}

\end{document}